\theoremstyle{plain}
\newtheorem{thm}{Theorem}[section]
\newtheorem{cor}[thm]{Corollary}
\newtheorem{lem}[thm]{Lemma}
\newtheorem{prop}[thm]{Proposition}
\theoremstyle{definition}
\newtheorem{defn}[thm]{Definition}
\newtheorem{exam}[thm]{Example}
\newtheorem{rem}[thm]{Remark}
\newtheorem{claim}{Claim}
\newtheorem{step}{Step}
\numberwithin{equation}{section}
\newcommand{\mb}{\mathbb}
\newcommand{\mc}{\mathcal}
\newcommand{\lk}{\left}
\newcommand{\re}{\right}
\begin{document}
	
	\title[Directional Kronecker algebra for $\mathbb{Z}^q$-actions]{Directional Kronecker algebra for $\mathbb{Z}^q$-actions}

	\author[C. Liu L. Xu]{Chunlin Liu and Leiye Xu}
	\address{C. Liu: CAS Wu Wen-Tsun Key Laboratory of Mathematics, University of Science and Technology of China, Hefei, Anhui, 230026, PR China}
	
	\email{lcl666@mail.ustc.edu.cn}
	
	\address{L. Xu: CAS Wu Wen-Tsun Key Laboratory of Mathematics, University of Science and Technology of China, Hefei, Anhui, 230026, PR China}

	\email{leoasa@mail.ustc.edu.cn}

	\subjclass[2020]{Primary  37A25; Secondary 37A35, 37A05}
	
	\keywords{Directional sequence entropy, Directional Kronecker algebra, Directional discrete spectrum system.}
	\begin{abstract}
   In this paper, directional sequence entropy and directional Kronecker algebra for $\mathbb{Z}^q$-systems are introduced. The relation between sequence entropy  and directional sequence entropy are established.  Meanwhile, direcitonal discrete spectrum systems and directional null systems are defined. It is shown that a $\mathbb{Z}^q$-system has directional discrete spectrum if and only if it is directional null. Moreover, it turns out that a $\mathbb{Z}^q$-system has directional discrete spectrum along $q$ linearly independent directions if and only if it has discrete spectrum.
   \end{abstract}

	\maketitle
	
	\section{Introduction}
	Given $q\in \mathbb{N}$. Throughout this paper by a $\mathbb{Z}^q$-measure preserving dynamical system ($\mathbb{Z}^q$-MPS for short) we mean a quadruple $(X,\mathcal{B}_X,\mu, T)$,
	where $X$ is a compact metric space, the $\mathbb{Z}^q$-action $T$ is a homeomorphism from the additive group $\mathbb{Z}^q$ to the group of homeomorphisms of $X$, $\mathcal{B}_X$ is the Borel $\sigma$-algbra of $X$ and $\mu$ is  an invariant Borel probability measure.  We denote the corresponding homeomorphism of $X$ by $T^{\vec{v}}$ for any $\vec{v}\in \mathbb{Z}^q$, so that $T^{\vec{v}}\circ T^{\vec{w}}=T^{\vec{v}+\vec{w}}$ and $T^{\vec{0}} $ is the identity on $X$.

 Kolmogorov \cite{K} introduced the notion of entropy to dynamical systems, which played an important role in the study of ergodic theory and topological dynamics to measure the chaoticity or unpredictability of a system. To investigate the complexity of entropy zero systems, many invariants were introduced. Ku\v shnirenko \cite{Kus} developed an invariant, sequence entropy, to distinguish  non-isomorphic entropy zero systems. We refer to \cite{P,Sa} for further study on sequence entropy. Katok and Thouvenot \cite{KT} introduced the notion of slow entropy for measure preserving actions of discrete amenable groups to measure the complexity of entropy zero systems. Recently, Dou, Huang and Park \cite{DHP} introduced the notion of entropy dimension to classify the growth rate of the orbits of entropy zero systems. To investigate the Cellular Automaton map together with the Bernoulli shift, Milnor \cite{Mil} defined directional entropy. Many of its properties were further studied in \cite{Ak,Bro,CK,P1,Pa,ZY}. 

In this paper, we introduce a new invariant, directional sequence entropy, for $\mathbb{Z}^q$-actions, based on classical sequence entropy and directional entropy. We investigate some properties of directional sequence entropy, analogously to classical sequence entropy. Moreover, we discuss the relation between the spectrum and directional sequence entropy. One of our main results is to show the directional version of Ku\v shnirenko theorem, which tells us that a $\mathbb{Z}^q$-MPS has directional discrete spectrum if and only if its directional sequence entropy is always zero for any sequence along corresponding direction. In the proof of this result, our main difficulty is that many theories of classical sequence entropy can not be  directly applied to our proof such as spectral mixing theorem of Koopman-von Neumann \cite{KN}, since these results  are only available for the case of group actions. Motivated by a $\mb{Z}$-MPS, which was introduced by Park \cite{Pa}, we overcome this difficulty by establishing the relaiton between directional Kronecker algera and classical Kronecker algebra. Taking advantage of the relation between the two, we can show our results by some classical consequences for the case of $\mb{Z}$-actions. Meanwhile, we show that directional sequence entropy is closely relavant to classical sequence entropy and a $\mathbb{Z}^q$-MPS has directional discrete spectrum along $q$ linearly independent directions if and only if it has discrete spectrum.

For convenience, we focus on $\mathbb{Z}^2$-MPS. Using exactly the same methods, we can obtain corresponding results of $\mathbb{Z}^q$-MPS for any integer $q\geq 2$, which are introduced in Appendix B.
 To be precise, let $(X ,\mathcal{B}_X, \mu, T)$ be a $\mathbb{Z}^2$-MPS and $\vec{v}=(1,\beta)\in \mb{R}^2$ be a direction vector (For simplicity, we write $\vec{v}$ as $(1,\beta)$. In fact all results in this paper are true when $\vec{v}=(0,1)$ since this case is the same as the case of $\mathbb{Z}$-actions.). 
For a finite measurable partition $\alpha$ of $X$, let $$H_{\mu}(\alpha)=-\sum_{A\in \alpha}\mu(A)\log{\mu(A)}.$$ We put $$\Lambda^{\vec{v}}(b)=\left\{(m,n)\in\mathbb{Z}^2:\beta m-b/2\leq n\leq \beta m+b/2\right\}.$$
Given a finite measurable partition $\alpha$ of $X$. 
For any infinite subset $S=\{(m_i,n_i)\}_{i=1}^{\infty}$ of $\Lambda^{\vec{v}}(b)$ that $\{m_i\}_{i=1}^{\infty}$ is strictly monotone, we put
$$ h^S_{\mu}(T,\alpha)= \limsup_{k\to \infty} \frac{1}{k}H_\mu \lk(\bigvee_{i=1}^k T^{-(m_i,n_i)} \alpha\re).$$
Then we can define the directional sequence entroy of $T$ for the infinite subset $S$ by
$$h^S_{\mu}(T)=\sup_{\alpha}h^S_{\mu}(T,\alpha),$$
where the supremum is taken over all finite measurable partitions of $X$.  

 Define the $\vec{v}$-directional Kronecker algebra by
   $$\mathcal{K}_\mu^{\vec{v}}(b)=\left\{B\in\mathcal{B}_X: \overline{\lk\{U_T^{(m,n)}1_B :(m,n)\in \Lambda^{\vec{v}}(b) \re\}}\text{ is compact in } L^2(X,\mathcal{B}_X,\mu) \right\},$$
   where $U_T^{(m,n)}:L^2(X,\mathcal{B}_X,\mu)\to L^2(X,\mathcal{B}_X,\mu)$ is the unitary operator such that
   $$U_T^{(m,n)}f=f\circ T^{(m,n)}\text{ for all }f\in L^2(X,\mathcal{B}_X,\mu)$$ and $1_B$ is the indicator function of $B\in \mathcal{B}_X$, that is, 
      $$ 1_B(x)=\left\{
     \begin{array}{lr}
      1,                 & x\in B,\\
      0,                  & x\notin B.\\
      \end{array}
     \right. $$
   In Section 3, we prove that $\mathcal{K}_\mu^{\vec{v}}(b)$ is a $\sigma$-algebra and the definition of $\mathcal{K}_\mu^{\vec{v}}(b)$ is independent of the selection of $b\in (0,\infty)$. So we omit $b$ in $\mathcal{K}_\mu^{\vec{v}}(b)$ and write it as $\mathcal{K}_\mu^{\vec{v}}$.

Motivated by the work in \cite{HMY}, we can show the following consequence.
\begin{thm} \label{thm-3}
	Let $(X,\mathcal{B}_X,\mu,T)$ be a  $\mathbb{Z}^2$-MPS,  $\vec{v}=(1,\beta)\in\mathbb{R}^2$ be a direction vector and $b\in (0,\infty)$. Given a finite measurable partition $\alpha$ of $X$. For any infinite subset $S'$ of $\Lambda^{\vec{v}}(b)$,
	$$h^{S'}_{\mu}(T,\alpha)\leq H_\mu(\alpha|\mathcal{K}_\mu^{\vec{v}}).$$
    Moreover, there exists an infinite subset $S=\{(m_i,n_i)\}_{i=1}^{\infty}$ of $ \Lambda^{\vec{v}}(b)$ such that $\{m_i\}_{i=1}^{\infty}$ is strictly monotone and $$h^{S}_{\mu}(T,\alpha)=H_\mu(\alpha|\mathcal{K}_\mu^{\vec{v}}).$$
\end{thm}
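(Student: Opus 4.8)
The plan is to establish the inequality for an arbitrary infinite $S'$ first, and then build an optimal $S$ by hand; combined, these give both assertions. For the upper bound I would use \emph{only} the defining compactness of $\mathcal{K}_\mu^{\vec{v}}$, not any spectral input. Fix a finite $\mathcal{K}_\mu^{\vec{v}}$-measurable partition $\hat\alpha$, enumerate $S'=\{s_i\}_{i\ge1}$, and set $\mathcal{C}_k=\bigvee_{i=1}^k T^{-s_i}\hat\alpha$. Using $H_\mu(\beta)\le H_\mu(\beta\vee\mathcal{C}_k)=H_\mu(\mathcal{C}_k)+H_\mu(\beta\,|\,\mathcal{C}_k)$, conditional subadditivity, the inclusion $T^{-s_j}\hat\alpha\subseteq\mathcal{C}_k$, and the measure-preservation of $T^{s_j}$ (so $H_\mu(T^{-s_j}\alpha\,|\,T^{-s_j}\hat\alpha)=H_\mu(\alpha\,|\,\hat\alpha)$), I obtain
$$H_\mu\left(\bigvee_{i=1}^k T^{-s_i}\alpha\right)\le H_\mu\left(\bigvee_{i=1}^k T^{-s_i}\hat\alpha\right)+k\,H_\mu(\alpha\,|\,\hat\alpha).$$
The first term on the right is $o(k)$: since each $1_{\hat A}$ has precompact orbit $\{U_T^{(m,n)}1_{\hat A}:(m,n)\in\Lambda^{\vec{v}}(b)\}$, a finite $\epsilon$-net argument in the Rokhlin metric bounds $H_\mu(\bigvee_{i=1}^k T^{-s_i}\hat\alpha)$ by $C(\epsilon)+k\epsilon$, whence its normalized limit is zero. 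Dividing by $k$, letting $k\to\infty$, and then refining $\hat\alpha$ through finite $\mathcal{K}_\mu^{\vec{v}}$-measurable partitions increasing to $\mathcal{K}_\mu^{\vec{v}}$ (so that $H_\mu(\alpha\,|\,\hat\alpha)\searrow H_\mu(\alpha\,|\,\mathcal{K}_\mu^{\vec{v}})$ by the increasing martingale theorem) gives $h^{S'}_{\mu}(T,\alpha)\le H_\mu(\alpha\,|\,\mathcal{K}_\mu^{\vec{v}})$ for every infinite $S'\subseteq\Lambda^{\vec{v}}(b)$. Note that this half uses no monotonicity of the first coordinates.

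For the ``moreover'' part I would construct $S$ inductively so that the atoms $T^{-s_i}\alpha$ become relatively independent over $\mathcal{K}_\mu^{\vec{v}}$. Since conditioning only lowers entropy and $\mathcal{K}_\mu^{\vec{v}}$ is translation invariant,
$$H_\mu\left(\bigvee_{i=1}^k T^{-s_i}\alpha\right)\ge H_\mu\left(\bigvee_{i=1}^k T^{-s_i}\alpha\,\Big|\,\mathcal{K}_\mu^{\vec{v}}\right)=\sum_{j=1}^k H_\mu\left(T^{-s_j}\alpha\,\Big|\,\bigvee_{i=1}^{j-1}T^{-s_i}\alpha\vee\mathcal{K}_\mu^{\vec{v}}\right),$$
so it suffices to pick each $s_j$ so that the $j$-th summand exceeds $H_\mu(\alpha\,|\,\mathcal{K}_\mu^{\vec{v}})-\epsilon_j$ with $\epsilon_j\to0$. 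Having chosen $s_1,\dots,s_{j-1}$ and set $\mathcal{P}=\bigvee_{i<j}T^{-s_i}\alpha$, I would invoke the directional mixing input below to see that the set of $s\in\Lambda^{\vec{v}}(b)$ with $H_\mu(T^{-s}\alpha\,|\,\mathcal{P}\vee\mathcal{K}_\mu^{\vec{v}})\ge H_\mu(\alpha\,|\,\mathcal{K}_\mu^{\vec{v}})-\epsilon_j$ has density one in the strip. As any density-one subset of the strip contains points of arbitrarily large first coordinate, I can select such an $s_j$ with $m_j>m_{j-1}$, keeping $\{m_i\}$ strictly monotone and $s_j\in\Lambda^{\vec{v}}(b)$. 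A Ces\`aro average then yields $\liminf_k\frac1k H_\mu(\bigvee_{i=1}^k T^{-s_i}\alpha)\ge H_\mu(\alpha\,|\,\mathcal{K}_\mu^{\vec{v}})$, and the upper bound forces equality of the $\limsup$.

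The crux, and the step I expect to be the main obstacle, is the directional relative weak mixing invoked above: for every $g$ orthogonal to $L^2(X,\mathcal{K}_\mu^{\vec{v}},\mu)$ and every $h\in L^2(X,\mathcal{B}_X,\mu)$, the averages of $|\langle U_T^{(m,n)}g,h\rangle|$ over the points $(m,n)\in\Lambda^{\vec{v}}(b)$ with $|m|\le R$ tend to $0$ as $R\to\infty$; the density-one relative independence, and thence the conditional-entropy bound, then follow from a standard continuity-of-entropy estimate relating small correlations to near relative independence. In the genuine $\mathbb{Z}$-setting this density decay is immediate from the Koopman--von Neumann spectral theorem, but as noted in the introduction that theorem is not directly available along a single direction in $\mathbb{Z}^2$; I would instead transport it through the identification of $\mathcal{K}_\mu^{\vec{v}}$ with the classical Kronecker algebra established in Section~3, reducing the directional decay to the classical one. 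The remaining ingredients---the Rokhlin $\epsilon$-net estimate, the martingale convergence $H_\mu(\alpha\,|\,\hat\alpha)\to H_\mu(\alpha\,|\,\mathcal{K}_\mu^{\vec{v}})$, and the correlation-to-entropy continuity---are routine, so essentially all the difficulty is concentrated in this single directional spectral input.
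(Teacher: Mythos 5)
Your first half (the inequality $h^{S'}_{\mu}(T,\alpha)\leq H_\mu(\alpha|\mathcal{K}_\mu^{\vec{v}})$) is correct and is essentially the paper's Lemma 4.2: replace $\hat\alpha$ by a sequence of finite $\mathcal{K}_\mu^{\vec{v}}$-measurable partitions $\eta_k$ with $H_\mu(\alpha|\eta_k)\to H_\mu(\alpha|\mathcal{K}_\mu^{\vec{v}})$, use $H_\mu(\bigvee_i T^{-s_i}\alpha)\le H_\mu(\bigvee_i T^{-s_i}\eta_k)+kH_\mu(\alpha|\eta_k)$, and note that the first term is $o(k)$ by the compactness/$\epsilon$-net argument (this is exactly the content of the paper's Theorem 3.4 and Corollary 3.5, proved via the Rokhlin-metric continuity lemma you cite). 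No issue there.

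The gap is in the ``moreover'' part, and it is precisely at the step you yourself flag as the crux. You invoke a directional Koopman--von Neumann decay for functions orthogonal to $L^2(X,\mathcal{K}_\mu^{\vec{v}},\mu)$ and propose to obtain it by transporting the classical theorem ``through the identification of $\mathcal{K}_\mu^{\vec{v}}$ with the classical Kronecker algebra established in Section~3.'' No such identification is established in Section~3 (which only characterizes $\mathcal{K}_\mu^{\vec{v}}$ via directional sequence entropy), and in the paper this identification is not an available prerequisite but the main content of the proof of this very theorem: one must build Park's suspension $\widetilde X=X\times[0,1)^2$ with the $\mathbb{Z}$-action $W=\phi_{1,\beta}$ and prove $\mathcal{K}_{\widetilde\mu}=\mathcal{K}_\mu^{\vec{v}}\times\mathcal{C}$, which requires the approximation argument of Claim A.1 (Lebesgue differentiation plus a Minkowski-inequality estimate) occupying Appendix A. Your proposal assumes the hardest ingredient rather than proving it. There is a second, smaller overreach in the same step: even granting the identification, Koopman--von Neumann applies to $U_W^n$, which acts fiberwise by $T^{([n+u],[n\beta+v])}$ and is \emph{not} $U_T^{(n,[n\beta])}$; the paper controls the discrepancy by the bound $2\|f_j\|_1\{n\beta\}$ and must restrict to the syndetic sets $\{n:\{n\beta\}\le\epsilon/(2^{\gamma}M)\}$ intersected with the density-one KvN set. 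This produces only an infinite subsequence of the strip with strictly increasing first coordinates --- which suffices for the inductive construction --- but it does not yield your claimed density-one decay over all of $\Lambda^{\vec{v}}(b)$. The inductive scheme in your last step (Ces\`aro averaging of near-relative-independence, matched against the upper bound) is sound and matches the paper's Steps 3--4, but it cannot be executed until the directional spectral input is actually supplied.
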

\begin{rem}\label{r1}
	Let $\alpha$ be a finite measurable partition of $X$. We
	define 
	\begin{align}\label{4}h_\mu^{\vec{v},*}(\alpha)=\sup\limits_{S\subset \Lambda^{\vec{v}}(b)}\{h^{S}_{\mu}(T,\alpha)\},
	\end{align}
	where the supremum is taken over all infinite subsets $S=\{(m_i,n_i)\}_{i=1}^{\infty}$ of $ \Lambda^{\vec{v}}(b)$. By Theorem \ref{thm-3}, $h_\mu^{\vec{v},*}(\alpha)=H_\mu(\alpha|\mathcal{K}_\mu^{\vec{v}})$ and then
	\eqref{4} is well defined since it is independent of the selection of $b$.
\end{rem}
 In order to investigate  entropy zero systems, Ku\v shnirenko \cite{Kus} introduced the notion of sequence entropy and studied spectral theory via sequence entropy. He obtained a classical consequence, that is, a $\mathbb{Z}$-MPS is a discrete spectrum system if and only if it is a null system.
 Motivated by his work, we define directional discrete spectrum systems and directional null systems as follows.
\begin{defn}
	Let $(X,\mathcal{B}_X,\mu,T)$ be a $\mathbb{Z}^2$-MPS and $\vec{v}=(1,\beta)\in\mathbb{R}^2$ be a direction vector.
	\begin{itemize}
		\item[(a)]We say $\mu$ has $\vec{v}$-discrete spectrum if $\mathcal{K}_\mu^{\vec{v}}=\mathcal{B}_X$.
		\item[(b)] We say $(X,\mathcal{B}_X,\mu,T)$ is  $\vec{v}$-null if for any $b\in (0,\infty)$ and infinite subset $S=\{(m_i,n_i)\}_{i=1}^{\infty}$ of $ \Lambda^{\vec{v}}(b)$, $h^{S}_{\mu}(T)=0$.
	\end{itemize}
\end{defn}
We obtain one of our main results which is a directional version of Ku\v shnirenko theorem.
\begin{thm}\label{thm2}
	Let $(X,\mathcal{B}_X,\mu,T)$ be a $\mathbb{Z}^2$-MPS and $\vec{v}=(1,\beta)\in\mathbb{R}^2$ be a direction vector.  Then the following two conditions are equivalent.
	\begin{itemize}
		\item[(a)] $\mu$ has $\vec{v}$-discrete spectrum system.
		\item[(b)]$(X,\mathcal{B}_X,\mu,T)$ is $\vec{v}$-null.
	\end{itemize}
\end{thm}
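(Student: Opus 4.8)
The plan is to prove Theorem~\ref{thm2} by reducing the directional statement to the classical Ku\v snirenko theorem for $\mathbb{Z}$-actions, using the key tool (already established earlier in the paper) that $h_\mu^{\vec{v},*}(\alpha)=H_\mu(\alpha\mid\mathcal{K}_\mu^{\vec{v}})$ from Theorem~\ref{thm-3} and Remark~\ref{r1}. The central observation is that both conditions can be recast purely in terms of the conditional entropy $H_\mu(\alpha\mid\mathcal{K}_\mu^{\vec{v}})$, so the equivalence becomes a statement about when this quantity vanishes for all finite measurable partitions $\alpha$.

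First I would prove the implication (a)$\Rightarrow$(b). Assume $\mu$ has $\vec{v}$-discrete spectrum, i.e.\ $\mathcal{K}_\mu^{\vec{v}}=\mathcal{B}_X$. Then for any finite measurable partition $\alpha$ of $X$, since $\alpha$ is measurable with respect to $\mathcal{K}_\mu^{\vec{v}}$, we have $H_\mu(\alpha\mid\mathcal{K}_\mu^{\vec{v}})=0$. By Theorem~\ref{thm-3}, for any $b\in(0,\infty)$ and any infinite subset $S$ of $\Lambda^{\vec{v}}(b)$ (with strictly monotone first coordinates) we get $h^S_\mu(T,\alpha)\leq H_\mu(\alpha\mid\mathcal{K}_\mu^{\vec{v}})=0$. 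Taking the supremum over all finite measurable partitions $\alpha$ yields $h^S_\mu(T)=0$, so the system is $\vec{v}$-null.

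Next I would prove the contrapositive of (b)$\Rightarrow$(a), or equivalently (a)$^c\Rightarrow$(b)$^c$. Suppose $\mu$ does not have $\vec{v}$-discrete spectrum, so $\mathcal{K}_\mu^{\vec{v}}\subsetneq\mathcal{B}_X$. Then there exists a set $B\in\mathcal{B}_X$ that is not (equivalent mod $\mu$ to a set) in $\mathcal{K}_\mu^{\vec{v}}$; taking the partition $\alpha=\{B,X\setminus B\}$, one has $H_\mu(\alpha\mid\mathcal{K}_\mu^{\vec{v}})>0$, because the conditional entropy of a nontrivial partition relative to a strictly smaller $\sigma$-algebra is strictly positive. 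By the ``moreover'' part of Theorem~\ref{thm-3}, there is an infinite subset $S=\{(m_i,n_i)\}_{i=1}^\infty$ of $\Lambda^{\vec{v}}(b)$ with $\{m_i\}$ strictly monotone such that $h^S_\mu(T,\alpha)=H_\mu(\alpha\mid\mathcal{K}_\mu^{\vec{v}})>0$. Hence $h^S_\mu(T)\geq h^S_\mu(T,\alpha)>0$, so the system is not $\vec{v}$-null.

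The main obstacle I anticipate is the step asserting that $H_\mu(\alpha\mid\mathcal{K}_\mu^{\vec{v}})>0$ whenever $\alpha$ is not measurable with respect to $\mathcal{K}_\mu^{\vec{v}}$. This requires knowing that $\mathcal{K}_\mu^{\vec{v}}$ is genuinely a $\sigma$-algebra (established in Section~3 of the paper) so that the conditional entropy is well behaved, and requires the standard fact that $H_\mu(\alpha\mid\mathcal{F})=0$ if and only if every atom of $\alpha$ is $\mathcal{F}$-measurable mod $\mu$. One must therefore argue carefully that failure of $\mathcal{K}_\mu^{\vec{v}}=\mathcal{B}_X$ produces a \emph{finite} partition witnessing strictly positive conditional entropy; this is where measure-theoretic care is needed, since $\mathcal{B}_X$ is generated by $\mathcal{K}_\mu^{\vec{v}}$ together with at least one set not contained in it, and one selects that witnessing set explicitly. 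The rest of the argument is a routine application of Theorem~\ref{thm-3} in both directions.
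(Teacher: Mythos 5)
Your proof is correct, and the forward direction (a)$\Rightarrow$(b) coincides with the paper's: both deduce $h^S_\mu(T,\alpha)\le H_\mu(\alpha\mid\mathcal{K}_\mu^{\vec v})=0$ from the inequality half of Theorem \ref{thm-3} (i.e.\ Lemma \ref{lem-4}). The converse is where you diverge. The paper argues directly, with no contrapositive: $\vec v$-nullness gives $h^S_\mu(T,\{B,B^c\})=0$ for every $B\in\mathcal{B}_X$ and every admissible $S$, and Corollary \ref{prop-111} (which rests on Theorem \ref{thm-11} and the elementary $S(f)$-limit argument of Lemma \ref{lem-6}) then says precisely that such a $B$ lies in $\mathcal{K}_\mu^{\vec v}$, so no discussion of when conditional entropy is strictly positive is needed. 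You instead invoke the achievability half of Theorem \ref{thm-3} (Lemma \ref{lem-10}, the substantially harder result that goes through Park's skew product and the Koopman--von Neumann theorem), together with the standard fact that $H_\mu(\{B,B^c\}\mid\mathcal{F})=0$ if and only if $B\in\mathcal{F}$ mod $\mu$. Both routes are legitimate given what the paper has already established. The point you flag as delicate --- producing a finite partition with strictly positive conditional entropy --- is in fact unproblematic: membership of $B$ in $\mathcal{K}_\mu^{\vec v}$ depends only on the $L^2$-class of $1_B$, so the algebra is saturated under mod-$\mu$ equivalence, and any $B\in\mathcal{B}_X\setminus\mathcal{K}_\mu^{\vec v}$ already witnesses $H_\mu(\{B,B^c\}\mid\mathcal{K}_\mu^{\vec v})>0$. (A minor imprecision: your parenthetical restriction to sequences with strictly monotone first coordinates is unnecessary for (a)$\Rightarrow$(b), since the definition of $\vec v$-null quantifies over all infinite subsets of $\Lambda^{\vec v}(b)$ and the upper bound in Theorem \ref{thm-3} already covers them all.) The paper's route is the lighter one, since it never needs the harder half of Theorem \ref{thm-3}; yours has the small conceptual advantage of reducing the whole theorem to the single identity $h_\mu^{\vec v,*}(\alpha)=H_\mu(\alpha\mid\mathcal{K}_\mu^{\vec v})$ and the question of when that quantity vanishes for all $\alpha$.
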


Meanwhile, we establish the relation between sequence entropy (respectively discrete spectrum) and directional sequence entropy (respectively direcitonal  discrete spectrum) as follows.
\begin{thm}\label{thm3}
	Let $(X,\mathcal{B}_X,\mu,T)$ be a $\mathbb{Z}^2$-MPS. Given a finite measurable partition $\alpha$ of $X$. If there exists an infinite subset $S$ of $\mathbb{Z}^2$ such that $h^S_{\mu}(T,\alpha)>0$, then there is at most one direction $\vec{v}=(1,\beta)\in\mb{R}^2$ such that $h_\mu^{\vec{v},*}(\alpha)=0$.
\end{thm}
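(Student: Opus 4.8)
The plan is to translate everything into statements about $\sigma$-algebras and then compare the directional Kronecker algebra $\mathcal{K}_\mu^{\vec v}$ with the classical $(\mathbb{Z}^2$-$)$Kronecker algebra $\mathcal{K}_\mu$, defined as the sub-$\sigma$-algebra of all $B\in\mathcal{B}_X$ whose full orbit $\{U_T^{(m,n)}1_B:(m,n)\in\mathbb{Z}^2\}$ is relatively compact in $L^2(X,\mathcal{B}_X,\mu)$. By Remark \ref{r1}, $h_\mu^{\vec v,*}(\alpha)=H_\mu(\alpha\mid\mathcal{K}_\mu^{\vec v})$, so $h_\mu^{\vec v,*}(\alpha)=0$ is equivalent to $\alpha\subseteq\mathcal{K}_\mu^{\vec v}$ modulo $\mu$ (every atom of $\alpha$ agrees, up to a null set, with a set in $\mathcal{K}_\mu^{\vec v}$). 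On the other hand, the classical bound $h^S_\mu(T,\alpha)\le H_\mu(\alpha\mid\mathcal{K}_\mu)$ for every infinite $S\subseteq\mathbb{Z}^2$ — the full-group analogue of Theorem \ref{thm-3}, cf. \cite{Kus,HMY} — shows that the hypothesis $h^S_\mu(T,\alpha)>0$ forces $H_\mu(\alpha\mid\mathcal{K}_\mu)>0$, i.e. $\alpha\not\subseteq\mathcal{K}_\mu$ modulo $\mu$.

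I would then argue by contradiction. Suppose there were two distinct directions $\vec v_1=(1,\beta_1)$ and $\vec v_2=(1,\beta_2)$ with $\beta_1\ne\beta_2$ and $h_\mu^{\vec v_1,*}(\alpha)=h_\mu^{\vec v_2,*}(\alpha)=0$; by the previous paragraph this says $\alpha\subseteq\mathcal{K}_\mu^{\vec v_1}\cap\mathcal{K}_\mu^{\vec v_2}$ modulo $\mu$. The whole theorem then reduces to the key lemma that, for linearly independent directions (which $\vec v_1,\vec v_2$ are, since $\beta_1\ne\beta_2$), one has $\mathcal{K}_\mu^{\vec v_1}\cap\mathcal{K}_\mu^{\vec v_2}\subseteq\mathcal{K}_\mu$ modulo $\mu$. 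Granting this, $\alpha\subseteq\mathcal{K}_\mu$, contradicting $\alpha\not\subseteq\mathcal{K}_\mu$; hence at most one direction can make $h_\mu^{\vec v,*}(\alpha)$ vanish.

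The real obstacle is the key lemma. Fix $B\in\mathcal{K}_\mu^{\vec v_1}\cap\mathcal{K}_\mu^{\vec v_2}$ and set $f=1_B$; I must prove the full orbit $\{U_T^{\vec w}f:\vec w\in\mathbb{Z}^2\}$ is relatively compact. A short rounding computation, using that $\vec v_1,\vec v_2$ span $\mathbb{R}^2$, produces a single width $b$, depending only on $\beta_1,\beta_2$, such that every $\vec w\in\mathbb{Z}^2$ decomposes as $\vec w=\vec w_1+\vec w_2$ with $\vec w_i\in\Lambda^{\vec v_i}(b)$; here I use the $b$-independence of $\mathcal{K}_\mu^{\vec v}$ proved in Section 3 to fix this $b$. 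The delicate point is that one cannot conclude relative compactness directly, since $\Lambda^{\vec v_1}(b)+\Lambda^{\vec v_2}(b)$ already exhausts $\mathbb{Z}^2$ and a naive composition argument merely reproduces the full orbit.

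I would break this circularity using commutativity of the action. Fix $\varepsilon>0$. As $f\in\mathcal{K}_\mu^{\vec v_1}$, its $\vec v_1$-strip orbit is totally bounded, so finitely many points $\vec w_1^{(1)},\dots,\vec w_1^{(p)}\in\Lambda^{\vec v_1}(b)$ give vectors $g_j:=U_T^{\vec w_1^{(j)}}f$ forming an $\varepsilon$-net of $\{U_T^{\vec w_1}f:\vec w_1\in\Lambda^{\vec v_1}(b)\}$. Since the action commutes, the $\vec v_2$-strip orbit of each $g_j$ is $U_T^{\vec w_1^{(j)}}$ applied to the $\vec v_2$-strip orbit of $f$, which is relatively compact because $f\in\mathcal{K}_\mu^{\vec v_2}$; as the image of a relatively compact set under an isometry it is again relatively compact, so $\bigcup_{j=1}^{p}\{U_T^{\vec w_2}g_j:\vec w_2\in\Lambda^{\vec v_2}(b)\}$ admits a finite $\varepsilon$-net. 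Now given arbitrary $\vec w=\vec w_1+\vec w_2$ as above, write $U_T^{\vec w}f=U_T^{\vec w_2}(U_T^{\vec w_1}f)$, choose $j$ with $\|U_T^{\vec w_1}f-g_j\|<\varepsilon$, and use that $U_T^{\vec w_2}$ is an isometry to get $\|U_T^{\vec w}f-U_T^{\vec w_2}g_j\|<\varepsilon$; thus $U_T^{\vec w}f$ lies within $2\varepsilon$ of the finite net. As $\varepsilon$ was arbitrary, the full orbit is totally bounded, hence relatively compact, so $B\in\mathcal{K}_\mu$ and the lemma follows. The only external inputs are the classical sequence-entropy bound over $\mathcal{K}_\mu$ and the standard fact that $H_\mu(\alpha\mid\mathcal{F})=0$ iff $\alpha$ is $\mathcal{F}$-measurable mod $\mu$, both of which I would verify hold for $\mathbb{Z}^2$-actions, adapting the $\mathbb{Z}$-action references as needed.
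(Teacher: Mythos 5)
Your proposal is correct and follows essentially the same route as the paper: reduce $h_\mu^{\vec v,*}(\alpha)=0$ to $\alpha\subseteq\mathcal{K}_\mu^{\vec v}$ via Theorem \ref{thm-3}, use the sumset decomposition $\mathbb{Z}^2=\Lambda^{\vec v_1}(b)+\Lambda^{\vec v_2}(b)$, and run the same commutativity-plus-isometry $\epsilon$-net argument to show $\mathcal{K}_\mu^{\vec v_1}\cap\mathcal{K}_\mu^{\vec v_2}\subseteq\mathcal{K}_\mu$, contradicting the positivity of full-lattice sequence entropy (which the paper extracts from \cite{KL} rather than from the conditional-entropy bound, an immaterial difference).
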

\begin{thm}\label{thm4}	
		Let $(X,\mathcal{B}_X,\mu,T)$ be a $\mathbb{Z}^2$-MPS. Then the following statements are equivalent.
		\begin{itemize}
			\item[(a)] There exist two directions $\vec{v}=(1,\beta_1)$, $\vec{w}=(1,\beta_2)\in\mb{R}^2$ with $\beta_1\neq \beta_2$ such that $\mu$ has $\vec{v}$-discrete spectrum and $\vec{w}$-discrete spectrum.
			\item[(b)]$\mu$ has discrete spectrum.
		\end{itemize}
\end{thm}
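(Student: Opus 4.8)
The plan is to prove the two implications separately, the implication (b)$\Rightarrow$(a) being essentially formal and the implication (a)$\Rightarrow$(b) resting on Theorem \ref{thm3}. Throughout, write $\mathcal{K}_\mu$ for the classical ($\mathbb{Z}^2$) Kronecker algebra, so that $\mu$ has discrete spectrum precisely when $\mathcal{K}_\mu=\mathcal{B}_X$.

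For (b)$\Rightarrow$(a), I would first record the elementary inclusion $\mathcal{K}_\mu\subseteq\mathcal{K}_\mu^{\vec{v}}$ valid for every direction $\vec{v}=(1,\beta)$. Indeed, for any $b\in(0,\infty)$ one has $\Lambda^{\vec{v}}(b)\subset\mathbb{Z}^2$, so $\{U_T^{(m,n)}1_B:(m,n)\in\Lambda^{\vec{v}}(b)\}$ is a subset of the full orbit $\{U_T^{(m,n)}1_B:(m,n)\in\mathbb{Z}^2\}$; since the closure of a subset of a relatively compact set is again compact, $B\in\mathcal{K}_\mu$ forces $B\in\mathcal{K}_\mu^{\vec{v}}$. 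Hence if $\mu$ has discrete spectrum, i.e.\ $\mathcal{K}_\mu=\mathcal{B}_X$, then $\mathcal{K}_\mu^{\vec{v}}=\mathcal{B}_X$ for \emph{every} $\vec{v}$, and choosing any two distinct slopes $\beta_1\neq\beta_2$ yields (a).

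For (a)$\Rightarrow$(b), I would argue by contraposition: assuming $\mu$ does not have discrete spectrum, I show that no two distinct directions can both carry directional discrete spectrum. Since $\mathcal{K}_\mu\neq\mathcal{B}_X$, there is a finite measurable partition $\alpha$ with $H_\mu(\alpha|\mathcal{K}_\mu)>0$. Invoking the classical sequence-entropy formula for the full $\mathbb{Z}^2$-action, namely $\sup_{S\subset\mathbb{Z}^2}h^S_\mu(T,\alpha)=H_\mu(\alpha|\mathcal{K}_\mu)$ (the group-action analogue of the Ku\v{s}hnirenko and Huang--Maass--Ye results \cite{Kus,HMY}), this produces an infinite subset $S\subset\mathbb{Z}^2$ with $h^S_\mu(T,\alpha)>0$. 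Feeding this same partition $\alpha$ into Theorem \ref{thm3}, there is at most one direction $\vec{v}=(1,\beta)$ with $h_\mu^{\vec{v},*}(\alpha)=0$. Consequently, for any two distinct directions $\vec{v}=(1,\beta_1)$ and $\vec{w}=(1,\beta_2)$ with $\beta_1\neq\beta_2$, at least one of them, say $\vec{w}$, satisfies $h_\mu^{\vec{w},*}(\alpha)>0$; by Remark \ref{r1} this means $H_\mu(\alpha|\mathcal{K}_\mu^{\vec{w}})>0$, so $\mathcal{K}_\mu^{\vec{w}}\neq\mathcal{B}_X$ and $\mu$ fails to have $\vec{w}$-discrete spectrum. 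Thus (a) cannot hold, which is the desired contrapositive.

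I expect the main obstacle to be the justification of the classical formula $\sup_{S\subset\mathbb{Z}^2}h^S_\mu(T,\alpha)=H_\mu(\alpha|\mathcal{K}_\mu)$ for the full group action, since the spectral-mixing machinery behind the one-dimensional case is not directly available for $\mathbb{Z}^2$; I would treat this as the $\mathbb{Z}^2$-analogue of the cited results, exactly as the rest of the paper reduces directional statements to classical ones through the relation between $\mathcal{K}_\mu^{\vec v}$ and $\mathcal{K}_\mu$. The remaining points are routine: checking that Theorem \ref{thm3} applies with a single fixed partition (which it does, its hypothesis and conclusion both being stated per-partition), and matching $h_\mu^{\vec{w},*}(\alpha)$ with $H_\mu(\alpha|\mathcal{K}_\mu^{\vec{w}})$ via Remark \ref{r1}.
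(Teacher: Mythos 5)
Your proposal is correct, but the implication (a)$\Rightarrow$(b) is argued along a genuinely different route from the paper's. The paper proves (a)$\Rightarrow$(b) directly: given $B\in\mathcal{B}_X$, it takes $\epsilon/2$-nets of the two compact directional orbit closures $\mathcal{P}_b$ and $\mathcal{Q}_b$, composes them via the sumset identity $\mathbb{Z}^2=\Lambda^{\vec{v}}(b)+\Lambda^{\vec{w}}(b)$ of Lemma \ref{c}, and concludes that the full orbit closure $\overline{\{U_T^{(m,n)}1_B:(m,n)\in\mathbb{Z}^2\}}$ is compact, i.e.\ $\mathcal{K}_\mu=\mathcal{B}_X$ --- no entropy is mentioned at all. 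You instead argue by contraposition through Theorem \ref{thm3}, which buys you a shorter derivation at the level of Section 5 but hides the same $\epsilon$-net/sumset argument inside the proof of Theorem \ref{thm3}; the paper's direct version has the advantage of yielding the stronger remark that discrete spectrum implies $\vec{v}$-discrete spectrum for \emph{every} direction, which your entropy detour does not immediately give. One point to tighten: you lean on the formula $\sup_{S\subset\mathbb{Z}^2}h^S_\mu(T,\alpha)=H_\mu(\alpha|\mathcal{K}_\mu)$ for the full $\mathbb{Z}^2$-action, which the paper never proves and which you yourself flag as the main obstacle. You do not need it: if $\mathcal{K}_\mu\neq\mathcal{B}_X$, pick $B\notin\mathcal{K}_\mu$ and apply Lemma \ref{11} (the quoted Kerr--Li result) to get an infinite $S\subset\mathbb{Z}^2$ with $h^S_\mu(T,\{B,B^c\})>0$; feeding $\alpha=\{B,B^c\}$ into Theorem \ref{thm3} then runs your contrapositive with only tools already established in the paper. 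Your (b)$\Rightarrow$(a) direction coincides with the paper's (closed subsets of compact orbit closures are compact).
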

     This paper is organized as follows. In Section 2, we recall some basic notions that we use in this paper. In Section 3, we introduce the directional Kronecker algebra and study many properties of directional sequence entropy to describe it. In Section 4, we prove Theorem \ref{thm-3} and Theorem \ref{thm2}. In Section 5, we prove Theorem \ref{thm3} and Theorem \ref{thm4}. In Appendix A, we prove some consequences that are not directly relevant to the main purpose of this paper. In Appendix B, we introduce the corresponding results of $\mathbb{Z}^q$-MPS.

 \section{Preliminaries}
In this section we recall some notions of measure preserving systems that are used later (see \cite{DT,DR,EW,Peter}).
\subsection{General notions.}
In this article, the sets of real numbers, rational numbers, integers, non-negative integers and natural numbers are denoted by $\mb{R}$, $\mathbb{Q}$, $\mathbb{Z}$, $\mathbb{Z}_+$ and $\mathbb{N}$, respectively.  We use $\|f\|_p$ to denote the $L^p$-norm of a function $f$ defined in a Borel probability measure space $(X,\mathcal{B}_X,\mu)$, that is,  
  $$ \|f\|_p=  
  (\int_{X}|f|^pd\mu)^{\frac{1}{p}}, \text{ if }  1\leq p<\infty$$ and
 $$ \|f\|_{\infty}=\inf\{a\geq 0:\mu\lk(\{x:|f(x)|>a\}\re)=0\}.$$

\subsection{Conditional expectation.}
Let $\mathcal{X}$ be a sub-$\sigma$-algebra of $\mathcal{B}_X$. Then $L^2(X,\mathcal{X},\mu)$ is contained as a subspace in $L^2(X,\mathcal{B}_X,\mu)$  and the conditional expectation is given as the orthogonal projection map:
$$\mathbb{E}(\cdot|\mathcal{X}): L^2(X,\mathcal{B}_X,\mu)\to L^2(X,\mathcal{X},\mu).$$ 
It is known that the conditional expectation is uniquely determined by the following statements (see \cite{DR}).
\begin{itemize}
\item[(a)] $\mathbb{E}(f|\mathcal{X})$ is $\mc{X}$-measurable for all $f\in L^2(X,\mathcal{B}_X,\mu)$.
\item[(b)]For all $A\in \mc{X}$, $\int_A f d\mu=\int_A \mathbb{E}(f|\mathcal{X})d\mu. $
\end{itemize}

Therefore we have the following result of conditional expectation.
\begin{prop}\label{prop1}
Let $(X,\mathcal{B}_X,\mu)$ and $(Y,\mathcal{B}_Y,\nu)$ be two Borel probability spaces. Let $\mathcal{X}$ and $\mathcal{Y}$ be sub-$\sigma$-algebras of $\mathcal{B}_X$ and $\mathcal{B}_Y$, respectively. Then for any $f\in L^2(X,\mathcal{B}_X,\mu)$ and $g\in L^2(Y,\mathcal{B}_Y,\nu)$, 
$$\mathbb{E}(fg|\mathcal{X}\times \mathcal{Y})=\mathbb{E}(f|\mathcal{X})\mathbb{E}(g|\mathcal{Y})$$
for $\mu\times \nu$-a.e. $(x,y)\in X\times Y$,
where $\mathcal{X}\times \mathcal{Y}$ be the product $\sigma$-algebra of $\mathcal{X}$ and $\mathcal{Y}$.
\begin{proof}
For the completeness of the paper, we conclude the proof which is easy to obtain by above result in \cite{DR} and Fubini's theorem.
By the uniqueness of the conditional expectation, it is sufficient to prove that for any $A\times B\in \mc{X}\times \mc{Y}$,
\begin{equation}\label{5}
\int_{A\times B} f(x)g(y) d\mu(x)d\nu(y)=\int_{A\times B} \mathbb{E}(f|\mathcal{X})(x)\mathbb{E}(g|\mathcal{Y})(y)d\mu(x)d\nu(y).
\end{equation} 
By Fubini's theorem, we obtain
$$\int_{A\times B} f(x)g(y) d\mu(x)d\nu(y)=\int_{A} f(x) d\mu(x)\int_{B} g(y)d\nu(y)$$
and
$$\int_{A\times B} \mathbb{E}(f|\mathcal{X})(x)\mathbb{E}(g|\mathcal{Y})(y)d\mu(x)d\nu(y)=\int_{A} \mathbb{E}(f|\mathcal{X})(x)d\mu(x)\int_{B}\mathbb{E}(g|\mathcal{Y})(y)d\nu(y).$$
Then we immediately get \eqref{5} by (b) in this subsection.
\end{proof}
\end{prop}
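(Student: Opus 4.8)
The plan is to verify that the proposed right-hand side $\mathbb{E}(f|\mathcal{X})(x)\,\mathbb{E}(g|\mathcal{Y})(y)$ satisfies the two defining properties (a) and (b) of the conditional expectation $\mathbb{E}(fg|\mathcal{X}\times\mathcal{Y})$ recalled just above, and then to invoke the uniqueness of the conditional expectation. First I would check that everything is integrable: since $\mu$ and $\nu$ are probability measures we have $L^2\subseteq L^1$, so by Tonelli together with Cauchy--Schwarz, $\int|fg|\,d(\mu\times\nu)=\|f\|_1\|g\|_1\le\|f\|_2\|g\|_2<\infty$, whence $fg\in L^1(\mu\times\nu)$; the identical estimate applied to $\mathbb{E}(f|\mathcal{X})$ and $\mathbb{E}(g|\mathcal{Y})$ shows the product of the conditional expectations also lies in $L^1(\mu\times\nu)$. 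Thus both sides of the claimed identity are well defined.

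For property (a), I would observe that $\mathbb{E}(f|\mathcal{X})$ is $\mathcal{X}$-measurable and $\mathbb{E}(g|\mathcal{Y})$ is $\mathcal{Y}$-measurable, so the function $(x,y)\mapsto\mathbb{E}(f|\mathcal{X})(x)\,\mathbb{E}(g|\mathcal{Y})(y)$, being a product of functions each depending measurably on a single coordinate, is measurable with respect to the product $\sigma$-algebra $\mathcal{X}\times\mathcal{Y}$. For property (b), I would reduce the integral identity to the $\pi$-system of measurable rectangles: it suffices to show $\int_{A\times B}fg\,d(\mu\times\nu)=\int_{A\times B}\mathbb{E}(f|\mathcal{X})\,\mathbb{E}(g|\mathcal{Y})\,d(\mu\times\nu)$ for every $A\in\mathcal{X}$ and $B\in\mathcal{Y}$. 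On such a rectangle Fubini factorizes both integrals into a product of an integral over $X$ and one over $Y$, and then property (b) applied separately to $f$ on $A$ and to $g$ on $B$ makes the two products coincide.

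The one place demanding care, and what I expect to be the main obstacle if one insists on full rigour, is the passage from rectangles to all of $\mathcal{X}\times\mathcal{Y}$. The characterization (b) quantifies over every set of the conditioning $\sigma$-algebra, whereas the rectangle computation only treats a generating family. I would close this gap by noting that the measurable rectangles form a $\pi$-system generating $\mathcal{X}\times\mathcal{Y}$, and that both set functions $C\mapsto\int_C fg\,d(\mu\times\nu)$ and $C\mapsto\int_C\mathbb{E}(f|\mathcal{X})\,\mathbb{E}(g|\mathcal{Y})\,d(\mu\times\nu)$ are finite (signed) measures on $\mathcal{X}\times\mathcal{Y}$ that agree on this $\pi$-system; the monotone class (Dynkin) theorem then forces them to agree on the whole $\sigma$-algebra. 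Having established property (a) and the full property (b), uniqueness of the conditional expectation delivers the desired equality $\mathbb{E}(fg|\mathcal{X}\times\mathcal{Y})=\mathbb{E}(f|\mathcal{X})\,\mathbb{E}(g|\mathcal{Y})$ for $\mu\times\nu$-almost every $(x,y)$.
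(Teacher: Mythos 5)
Your proof is correct and follows essentially the same route as the paper: verify the two defining properties of conditional expectation, reduce the integral identity to measurable rectangles, factorize via Fubini, and conclude by uniqueness. The only difference is that you explicitly close the passage from rectangles to all of $\mathcal{X}\times\mathcal{Y}$ with a $\pi$-system/Dynkin argument (and check integrability), a step the paper's proof leaves implicit; this is a welcome addition but not a change of method.
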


\subsection{Classical Kronecker algebra and discrete spectrum.}
Let $(X,\mathcal{B}_X,\mu,T)$ be a $\mathbb{Z}$-MPS and $\mathcal{H}=L^2(X,\mathcal{B}_X,\mu)$. In complex Hilbert space $\mathcal{H}$, we define a unitary operator $U_T:\mathcal{H}\rightarrow \mathcal{H}$ by $$U_Tf=f\circ T$$  for all  $f\in\mathcal{H}.$  We say that $f$ is an almost periodic function if $\overline{\left\{U_T^n f:n\in \mathbb{Z}\right\}}$ is a compact subset of $\mathcal{H}$.
It is well known that the set of all bounded almost periodic functions forms a $U_T$-invariant and conjugation-invariant subalgebra of $\mathcal{H}$ (denoted by $\mathcal{A}_c$). The set of almost periodic functions is just the closure of $\mathcal{A}_c$ (denoted by $\mathcal{H}_c$). It is known that (see \cite[Theorem 1.2]{Zi}) there exists a $T$-invariant sub-$\sigma$-algebra $\mathcal{K}_{\mu}$ of $\mathcal{B}_X$ such that $$\mathcal{H}_c=L^2(X,\mathcal{K}_{\mu},\mu).$$ The sub-$\sigma$-algebra $\mathcal{K}_{\mu}$ is called the Kronecker algebra of $(X,\mathcal{B}_X,\mu,T)$.    It is easy to know that $\mathcal{K}_\mu$ consists of all $B\in \mathcal{B}_X$ such that
$$\overline{\{ U_T^n1_B:n\in \mathbb{Z} \}} \text{ is compact in }\mathcal{H}.$$
We say $\mu$ has discrete spectrum if $\mathcal{B}_X=\mathcal{K}_{\mu}$. Similarly, the Kronecker algebra and discrete spectrum for the case of $\mathbb{Z}^q$-actions can be defined (see \cite{G}).

Let $S\subset \mathbb{Z}_+$ be an infinite sequence. We define the upper density and lower density by
$$\overline{d}(S)=\limsup\limits_{n\to \infty}\frac{1}{n}|S\cap\{0,1,\ldots, n-1\}|$$
and
$$\underline{d}(S)=\liminf\limits_{n\to \infty}\frac{1}{n}|S\cap\{0,1,\ldots, n-1\}|$$
where $|A|$ is the number of elements of a finite set $A$. 
If $\overline{d}(S)=\underline{d}(S)=d$, we say $S$ has the density $d$.

  We recall the spectral mixing theorem of  Koopman-von Neumann \cite{KN}.

\begin{thm}\label{knt}
The Hilbert space  $\mathcal{H}$ can be decomposed as $\mathcal{H}=\mathcal{H}_c\oplus \mathcal{H}_c^{\perp}$, where
	$$\mathcal{H}_c=\overline{span\{f\in \mathcal{H}:\text{there exists } \lambda\in \mathbb{C},\text{ such that }  U_T(f)=\lambda f\}}$$
	and
	$$\mathcal{H}_c^{\perp}=\{f\in \mathcal{H}:\exists S\subset \mathbb{N},\text{ }d(S)=1,\text{ such that } \text{for all } g\in \mathcal{H},\lim\limits_{n\to \infty\atop n\in S}\langle U_T^nf,g\rangle=0\}$$
	where $d(S)$ is the density of $S$ and $\langle \cdot,\cdot\rangle: \mathcal{H}\times \mathcal{H}\to \mathbb{C},  (f,g)\mapsto \langle f,g\rangle=\int_X f(x)\overline{g(x)}dx  $ is an inner product on $\mathcal{H}$.
\end{thm}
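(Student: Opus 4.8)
The plan is to recognise Theorem \ref{knt} as the classical spectral decomposition for a single unitary operator and to reduce it to one identification. Since $\mathcal{H}_c$ is by construction a closed subspace of $\mathcal{H}$, the orthogonal splitting $\mathcal{H}=\mathcal{H}_c\oplus(\mathcal{H}_c)^{\perp}$ holds automatically, so the entire content of the statement is that the orthogonal complement $(\mathcal{H}_c)^{\perp}$ equals the ``density-one weak mixing'' set
$$W=\{f\in\mathcal{H}:\exists\, S\subset\mathbb{N},\ d(S)=1,\ \forall g\in\mathcal{H},\ \lim_{n\to\infty,\,n\in S}\langle U_T^nf,g\rangle=0\}.$$
The main tool is the spectral theorem for the unitary $U_T$: there is a projection-valued measure $E$ on the unit circle $\mathbb{T}$ with $U_T=\int_{\mathbb{T}}z\,dE(z)$, and for $f,g\in\mathcal{H}$ I attach the finite complex measure $\sigma_{f,g}(A)=\langle E(A)f,g\rangle$, which satisfies $\langle U_T^nf,g\rangle=\int_{\mathbb{T}}z^n\,d\sigma_{f,g}(z)$; write $\sigma_f=\sigma_{f,f}$ for the nonnegative scalar spectral measure.

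First I would prove the easy inclusion $W\subset(\mathcal{H}_c)^{\perp}$. If $h$ is an eigenvector, $U_Th=\lambda h$ with $|\lambda|=1$, then $\langle U_T^nf,h\rangle=\lambda^n\langle f,h\rangle$, so $|\langle U_T^nf,h\rangle|=|\langle f,h\rangle|$ is independent of $n$. For $f\in W$ this quantity tends to $0$ along a density-one set, which forces $\langle f,h\rangle=0$. Thus every $f\in W$ is orthogonal to all eigenvectors, hence to their closed span $\mathcal{H}_c$.

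The substantive inclusion is $(\mathcal{H}_c)^{\perp}\subset W$. Fix $f\perp\mathcal{H}_c$. I would first check that $\sigma_f$ has no atoms: an atom $\sigma_f(\{z_0\})>0$ would make $h:=E(\{z_0\})f$ a nonzero eigenvector with eigenvalue $z_0$ and $\langle f,h\rangle=\sigma_f(\{z_0\})>0$, contradicting $f\perp\mathcal{H}_c$. Next, the Cauchy--Schwarz bound $|\sigma_{f,g}(A)|=|\langle E(A)f,E(A)g\rangle|\le\sigma_f(A)^{1/2}\sigma_g(A)^{1/2}$ shows that $\sigma_{f,g}$ is nonatomic for \emph{every} $g$. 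Wiener's lemma then gives
$$\lim_{N\to\infty}\frac1N\sum_{n=0}^{N-1}|\langle U_T^nf,g\rangle|^2=\sum_{z\in\mathbb{T}}|\sigma_{f,g}(\{z\})|^2=0\qquad(g\in\mathcal{H}).$$

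Finally I would upgrade these Cesàro statements, one per $g$, to a single density-one set valid for all $g$. Using separability of $\mathcal{H}=L^2(X,\mathcal{B}_X,\mu)$, fix a countable dense family $\{g_k\}_{k\ge1}$ and set $b_n=\sum_{k\ge1}2^{-k}(1+\|g_k\|^2)^{-1}|\langle U_T^nf,g_k\rangle|^2$; dominated convergence (each inner Cesàro mean tends to $0$ and is dominated by the summable $2^{-k}\|f\|^2$) yields $\frac1N\sum_{n<N}b_n\to0$, so the Koopman--von Neumann lemma --- a bounded nonnegative sequence has vanishing Cesàro mean iff it tends to $0$ off a density-zero set --- produces one density-one set $S$ along which $b_n\to0$, whence $\langle U_T^nf,g_k\rangle\to0$ along $S$ for every $k$. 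A triangle-inequality argument using $|\langle U_T^nf,g\rangle|\le\|f\|\,\|g-g_k\|+|\langle U_T^nf,g_k\rangle|$ then extends the convergence to all $g\in\mathcal{H}$, giving $f\in W$. The main obstacle is this hard inclusion, and within it the two technical hinges: the spectral-measure domination that transfers nonatomicity from $\sigma_f$ to every $\sigma_{f,g}$ (so that Wiener's lemma applies), and the diagonal passage from countably many density-one sets to a single one.
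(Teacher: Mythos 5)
The paper itself offers no proof of Theorem \ref{knt}: it is stated in the preliminaries as the classical spectral mixing theorem of Koopman--von Neumann, cited directly from \cite{KN}, and then used as a black box in Step 2 of Lemma \ref{lem-10}. So there is no paper proof to compare against step by step; what can be said is that your argument is correct and is essentially the standard proof of this classical fact. The easy inclusion is right: for an eigenvector $h$ the quantity $|\langle U_T^n f,h\rangle|=|\langle f,h\rangle|$ is constant in $n$, so density-one vanishing forces orthogonality. The three hinges of the hard inclusion also check out: an atom $\sigma_f(\{z_0\})>0$ would make $E(\{z_0\})f$ a nonzero eigenvector with $\langle f,E(\{z_0\})f\rangle=\sigma_f(\{z_0\})>0$, contradicting $f\perp\mathcal{H}_c$; the Cauchy--Schwarz bound $|\sigma_{f,g}(A)|\le\sigma_f(A)^{1/2}\sigma_g(A)^{1/2}$ legitimately transfers nonatomicity to every $\sigma_{f,g}$, so Wiener's lemma (which does hold for complex measures and one-sided averages) gives vanishing Ces\`aro means of $|\langle U_T^nf,g\rangle|^2$; and the weighted-sum diagonal trick combined with the Koopman--von Neumann density lemma correctly produces a single density-one set for the countable dense family, which your final $\|f\|\,\|g-g_k\|$ approximation extends to all $g\in\mathcal{H}$. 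Your reliance on three standard external results (spectral theorem, Wiener's lemma, KvN density lemma) is appropriate given that the statement itself is quoted background material in the paper.
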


\subsection{Sequence entropy}
Let $(X,\mathcal{B}_X,\mu,T)$ be a $\mathbb{Z}^2$-MPS.
Given a finite measurable partition $\alpha$ of $X$, let $$H_{\mu}(\alpha)=-\sum_{A\in \alpha}\mu(A)\log{\mu(A)}.$$ For any infinite subset $S=\{(m_i,n_i)\}_{i=1}^{\infty}$ of $\mathbb{Z}^2$, we put
$$ h^S_{\mu}(T,\alpha)= \limsup_{k\to \infty} \frac{1}{k}H_\mu \lk(\bigvee_{i=1}^k T^{-(m_i,n_i)} \alpha\re).$$
Then we define the sequence entroy of $T$ for the infinite subset $S$ by
$$h^S_{\mu}(T)=\sup_{\alpha}h^S_{\mu}(T,\alpha),$$
where the supremum is taken over all finite measurable partitions of $X$.


\subsection{$\epsilon$-net}
Let $A$ be a subset of a normed linear space $(X,\|\cdot\|)$ and $\epsilon>0$. A subset $A_{\epsilon}\subset X$ is called an $\epsilon$-net for $A$ if for each $x\in A$ there is an element of $A$ is within an $\epsilon$ distance to some element of $A_{\epsilon}$.

It is known that (see \cite[Page 28]{A}) a subset $A$ of a normed linear space $(X,\|\cdot\|)$ is compact if and only if for any $\epsilon>0$ there is a finite $\epsilon$-net $F_{\epsilon}\subset X$ for $A$. That is, there is a finite set $F_{\epsilon}\subset X$ such that 
$$A\subset \bigcup_{x\in F_{\epsilon}}B(x,\epsilon).$$

Note that the $\epsilon$-net mentioned in this paper is relative to $L^2$-norm unless we explicitly indicate otherwise.

\section{Directional Kronecker algebra}
 In this section we introduce the directional Kronecker algebra and describe it via directional sequence entropy.
Let $(X ,\mathcal{B}_X, \mu, T)$ be a $\mathbb{Z}^2$-MPS, $\vec{v}=(1,\beta)\in\mathbb{R}^2$ be a  direction vector and $b\in (0,\infty)$. We put $$\Lambda^{\vec{v}}(b)=\left\{(m,n)\in\mathbb{Z}^2:\beta m-b/2\leq n\leq \beta m+b/2\right\}.$$ Let $\mathcal{A}_c^{\vec{v}}(b)$ be the collection of $f\in L^2(X,\mathcal{B}_X,\mu)$ such that
$$\overline{\left\{U_T^{(m,n)}f:(m,n)\in \Lambda^{\vec{v}}(b) \right\}}$$ is compact in $L^2(X,\mathcal{B}_X,\mu).$
It is easy to see that  $\mathcal{A}_c^{\vec{v}}(b)$ is a $U_{T^{\vec{w}}}$-invariant for all $\vec{w}$ in $\mathbb{Z}^2$ and conjugation-invariant subalgebra of $L^2(X,\mathcal{B}_X,\mu)$.  It is known that (see \cite[Theorem 1.2]{Zi}) there exists a $T$-invariant sub-$\sigma$-algebra  $\mathcal{K}_\mu^{\vec{v}}(b)$  of $\mathcal{B}_X$ such that
\begin{align}\label{1}
\mathcal{A}_c^{\vec{v}}(b)=L^2(X,\mathcal{K}_\mu^{\vec{v}}(b),\mu).
\end{align}

 By \eqref{1}, the $\vec{v}$-directional Kronecker algebra of $(X ,\mathcal{B}_X, \mu, T)$ can be defined by
$$\mathcal{K}_\mu^{\vec{v}}(b)=\left\{B\in\mathcal{B}_X: \overline{\lk\{U_T^{(m,n)}1_B :(m,n)\in \Lambda^{\vec{v}}(b) \re\}}\text{ is compact in } L^2(X,\mathcal{B}_X,\mu) \right\}.$$

The following result shows that the definition of $\mathcal{K}_\mu^{\vec{v}}(b)$ is independent of the selection of $b\in (0,\infty)$.
    \begin{prop}\label{prop-1} Let $(X,\mathcal{B}_X,\mu,T)$ be a $\mathbb{Z}^2$-MPS and $\vec{v}=(1,\beta)\in\mathbb{R}^2$ be a direction vector.
 Then $\mathcal{K}_\mu^{\vec{v}}(b_1)=\mathcal{K}_\mu^{\vec{v}}(b_2)$ for any $b_1,b_2\in (0,\infty)$.
    \end{prop}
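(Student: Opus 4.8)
The goal is to show that the directional Kronecker algebra does not depend on the width parameter $b$. The plan is to prove both inclusions $\mathcal{K}_\mu^{\vec{v}}(b_1)\subseteq \mathcal{K}_\mu^{\vec{v}}(b_2)$ and $\mathcal{K}_\mu^{\vec{v}}(b_2)\subseteq \mathcal{K}_\mu^{\vec{v}}(b_1)$. By symmetry of the two parameters it suffices to treat one inclusion, and the natural choice is to assume $b_1\le b_2$ and argue in the harder direction, namely that compactness of the orbit over the wider strip $\Lambda^{\vec{v}}(b_2)$ follows from compactness over the narrower strip $\Lambda^{\vec{v}}(b_1)$. The easy direction is immediate: since $b_1\le b_2$ gives $\Lambda^{\vec{v}}(b_1)\subseteq \Lambda^{\vec{v}}(b_2)$, any $B$ with precompact orbit over the wider strip automatically has precompact orbit over the narrower one (a subset of a totally bounded set is totally bounded), so $\mathcal{K}_\mu^{\vec{v}}(b_2)\subseteq \mathcal{K}_\mu^{\vec{v}}(b_1)$.

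For the substantive inclusion I would fix $B\in\mathcal{K}_\mu^{\vec{v}}(b_1)$ and show $\overline{\{U_T^{(m,n)}1_B:(m,n)\in\Lambda^{\vec{v}}(b_2)\}}$ is compact in $L^2$. Using the $\epsilon$-net characterization of compactness recalled in Section 2.5, it is enough to produce, for every $\epsilon>0$, a finite $\epsilon$-net for the orbit over the wider strip. The key geometric observation is that the wide strip $\Lambda^{\vec{v}}(b_2)$ can be covered by finitely many integer translates of the narrow strip $\Lambda^{\vec{v}}(b_1)$: there exist finitely many vectors $\vec{w}_1,\dots,\vec{w}_N\in\mathbb{Z}^2$ such that every $(m,n)\in\Lambda^{\vec{v}}(b_2)$ can be written as $(m,n)=(m',n')+\vec{w}_j$ with $(m',n')\in\Lambda^{\vec{v}}(b_1)$ for some $j$. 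Indeed, translating the narrow strip vertically by integer multiples of (roughly) $b_1$ sweeps out the whole plane, so finitely many such vertical translates cover the wide strip. I would make this covering precise by choosing the $\vec{w}_j$ to be vertical integer vectors $(0,k)$ with $|k|$ bounded in terms of $b_2/b_1$.

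Granting the covering, the argument closes as follows. Since $U_T^{(m,n)}1_B = U_T^{\vec{w}_j}\bigl(U_T^{(m',n')}1_B\bigr)$ and $U_T^{\vec{w}_j}$ is a unitary (hence isometric) operator on $L^2$, the orbit over $\Lambda^{\vec{v}}(b_2)$ is contained in the union $\bigcup_{j=1}^N U_T^{\vec{w}_j}\bigl(\{U_T^{(m',n')}1_B:(m',n')\in\Lambda^{\vec{v}}(b_1)\}\bigr)$. Each set in this finite union is the image under an isometry of a totally bounded set, hence totally bounded; a finite union of totally bounded sets is totally bounded. Concretely, if $F_j$ is a finite $\epsilon$-net for the narrow-strip orbit, then $\bigcup_j U_T^{\vec{w}_j}F_j$ is a finite $\epsilon$-net for the wide-strip orbit. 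Therefore $B\in\mathcal{K}_\mu^{\vec{v}}(b_2)$, giving $\mathcal{K}_\mu^{\vec{v}}(b_1)\subseteq\mathcal{K}_\mu^{\vec{v}}(b_2)$ and completing the proof.

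I expect the main obstacle to be the geometric bookkeeping in the covering step, namely verifying carefully that finitely many integer vertical translates of $\Lambda^{\vec{v}}(b_1)$ cover $\Lambda^{\vec{v}}(b_2)$ when $\beta$ is irrational, so that the lattice points do not align neatly with the strip boundaries. This is a routine but slightly delicate estimate: one must check that every integer point of the wide strip lands in some translate of the narrow strip, which amounts to the elementary fact that the intervals $[\,\beta m + k - b_1/2,\ \beta m + k + b_1/2\,]$ for integer $k$ cover the fiber $\{n:\beta m - b_2/2\le n\le \beta m + b_2/2\}$ once the number of translates exceeds $b_2/b_1$. Once this covering is established, the functional-analytic part is the standard fact that isometries preserve total boundedness.
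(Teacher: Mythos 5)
Your overall architecture is exactly that of the paper's proof: the easy inclusion follows from $\Lambda^{\vec{v}}(b_1)\subseteq\Lambda^{\vec{v}}(b_2)$, and for the substantive inclusion both you and the paper cover the wide strip by finitely many integer translates of the narrow strip, then use that $U_T^{\vec{w}}$ is an isometry and that a finite union of totally bounded sets is totally bounded. However, your specific covering is flawed. You propose using only \emph{vertical} translates $(0,k)$ and assert that the intervals $[\beta m+k-b_1/2,\ \beta m+k+b_1/2]$, $k\in\mathbb{Z}$, cover the fiber of the wide strip over $m$. This is false whenever the narrow width satisfies $b_1<1$: consecutive intervals in that family are separated by gaps of length $1-b_1>0$, and for irrational $\beta$ there are infinitely many $m$ with $\{\beta m\}$ close to $1/2$, for which the integer $n$ nearest to $\beta m$ lies in such a gap. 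That $(m,n)$ belongs to $\Lambda^{\vec{v}}(b_2)$ (for any $b_2\geq b_1$ large enough that the strip contains it, e.g.\ $b_2\geq 1$) but to no vertical integer translate of $\Lambda^{\vec{v}}(b_1)$. Since the proposition is asserted for all $b_1,b_2\in(0,\infty)$, and thin strips are precisely the case one cannot dodge, this is a genuine gap rather than bookkeeping.

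The repair is to allow arbitrary translates $(m',n')\in\mathbb{Z}^2$, which is what the paper's finite set $C\subset\mathbb{Z}^2$ implicitly does. The point $(m,n)$ lies in $\Lambda^{\vec{v}}(b_1)+(m',n')$ if and only if $\lvert (n-\beta m)-(n'-\beta m')\rvert\leq b_1/2$, so one only needs the offsets $n'-\beta m'$, $(m',n')\in C$, to form a $b_1/2$-net of the set of values $n-\beta m$ realized on $\Lambda^{\vec{v}}(b_2)$, a subset of $[-b_2/2,b_2/2]$. For irrational $\beta$ the group $\mathbb{Z}+\beta\mathbb{Z}$ is dense in $\mathbb{R}$, so such a finite net exists; for rational $\beta=a/q$ every realized offset lies in $\tfrac{1}{q}\mathbb{Z}$ and can be matched exactly. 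With this corrected covering your functional-analytic conclusion goes through verbatim and coincides with the paper's argument.
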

\begin{proof}We put $$U(B,b)=\overline{\{U_T^{(m,n)}1_B :(m,n)\in \Lambda^{\vec{v}}(b) \}}$$ for $b\in(0,\infty)$ and $B\in\mathcal{B}_X$.
Fix $b_1,b_2\in(0,\infty)$. Without loss of generality, we assume that $b_1>b_2$.

We first prove that $\mathcal{K}_\mu^{\vec{v}}(b_1)\subset \mathcal{K}_\mu^{\vec{v}}(b_2).$ For a fixed $B\in \mathcal{K}_\mu^{\vec{v}}(b_1)$, $U(B,b_1)$ is compact  in $L^2(X,\mathcal{B}_X,\mu)$ Hence $U(B,b_2)$  is compact  in $L^2(X,\mathcal{B}_X,\mu)$ since  $U(B,b_2)$ is closed in $L^2(X,\mathcal{B}_X,\mu)$ and  $U(B,b_2)\subset U(B,b_1).$
 It follows that $$\mathcal{K}_\mu^{\vec{v}}(b_1)\subset \mathcal{K}_\mu^{\vec{v}}(b_2).$$

Next we show that $\mathcal{K}_\mu^{\vec{v}}(b_2)\subset\mathcal{K}_\mu^{\vec{v}}(b_1).$  Given $B\in \mathcal{K}_\mu^{\vec{v}}(b_2),$    $U(B,b_2)$ is compact in $L^2(X,\mathcal{B}_X,\mu). $ Note that we can find a  finite subset $C$ of $\mathbb{Z}^2$ such that
  $$U(B,b_1)\subset \bigcup_{(m',n')\in C}\overline{\lk\{U_T^{(m+m',n+n')}1_B :(m,n)\in \Lambda^{\vec{v}}(b_2) \re\}}.$$
 It follows from the fact that $\overline{\lk\{U_T^{(m,n)}1_B :(m,n)\in \Lambda^{\vec{v}}(b_2) \re\}}$ is  compact in $L^2(X,\mathcal{B}_X,\mu)$ that $$\overline{\lk\{U_T^{(m+m',n+n')}1_B :(m,n)\in \Lambda^{\vec{v}}(b_2) \re\}}$$ is compact in $L^2(X,\mathcal{B}_X,\mu)$ for all $(m',n')\in C$. So $U(B,b_1)$ is compact in $L^2(X,\mathcal{B}_X,\mu)$, since it is closed in $L^2(X,\mathcal{B}_X,\mu)$. This implies that $B\in \mathcal{K}_\mu^{\vec{v}}(b_1).$ Therefore $$\mathcal{K}_\mu^{\vec{v}}(b_2)\subset\mathcal{K}_\mu^{\vec{v}}(b_1).$$
This ends the proof of Proposition \ref{prop-1}.
\end{proof}

    Now we establish the relation between direcitonal Kronecker algebra and directional sequence entropy. Let us begin with some notations. Since complex Hilbert space  $\mathcal{H}=L^2(X,\mathcal{B}_X,\mu)$ is separable, there exists an infinite subset 
    \begin{equation}\label{S1}
      S=\{(m_i,n_i)\}_{i=1}^{\infty} 
    \end{equation}
    of $\Lambda^{\vec{v}}(b)$
   such that $\lim\limits_{i\to \infty}\langle g,U_T^{(m_i,n_i)}f\rangle$ exists for any $f,g \in \mathcal{H}$. Now for a fixed $f\in \mathcal{H}$, we define $J:\mathcal{H}\to \mathbb{C}$ by $$J(g)=\lim\limits_{i\to \infty}\langle g,U_T^{(m_i,n_i)}f\rangle.$$ Obviously, $J$ is a continuous linear functional on $\mathcal{H}$. By Riesz representation theorem, there exists $S(f)\in \mathcal{H}$ such that
    \begin{equation}\label{S2}
     J(g)=\langle g,S(f)\rangle 
    \end{equation}
   
     for all $g\in \mathcal{H}$.

    \begin{lem}\label{lem-6}
    	Let $(X,\mathcal{B}_X,\mu,T)$ be a $\mathbb{Z}^2$-MPS, $\vec{v}=(1,\beta)\in\mathbb{R}^2$ be a direction vector and $b\in (0,\infty)$. Then for any   finite measurable partition $\alpha$ of $X$, there exists an infinite subset $S'=\{(m'_i,n'_i)\}_{i=1}^{\infty}$ of $ S$ such that $\{m'_i\}_{i=1}^{\infty}$ is strictly monotone and
    	$$h^{S'}_{\mu}(T)\geq \sum_{B\in\alpha}\int_X-S(1_B)\log{S(1_B)}d\mu,$$
    	where $S$ and $S(\cdot)$ are defined by \eqref{S1} and \eqref{S2}, respectively.
    \end{lem}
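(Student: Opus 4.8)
The plan is to realize $S(1_B)$ as the weak limit of $U_T^{(m_i,n_i)}1_B=1_{T^{-(m_i,n_i)}B}$ along $S$, and to extract a subsequence $S'$ along which the partitions $T^{-(m'_i,n'_i)}\alpha$ become asymptotically conditionally independent with limiting fibre--distribution $\{S(1_B)\}_{B\in\alpha}$. First I would record the elementary properties of $S$: it is linear, $S(1)=1$, and since $\{f\in\mathcal{H}:0\le f\le 1\}$ is convex and norm-closed, hence weakly closed, one has $0\le S(1_B)\le 1$ a.e. and $\sum_{B\in\alpha}S(1_B)=1$. Writing $\psi(t)=-t\log t$, the target quantity is $\sum_{B\in\alpha}\int_X\psi(S(1_B))\,d\mu$. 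The argument then rests on the chain rule $H_\mu(\bigvee_{i=1}^k T^{-(m'_i,n'_i)}\alpha)=\sum_{i=1}^k H_\mu(T^{-(m'_i,n'_i)}\alpha\mid\mathcal{F}_{i-1})$, where $\mathcal{F}_{i-1}=\bigvee_{j<i}T^{-(m'_j,n'_j)}\alpha$, together with the standard identity $H_\mu(T^{-(m'_i,n'_i)}\alpha\mid\mathcal{F}_{i-1})=\sum_{B\in\alpha}\int_X\psi\big(\mathbb{E}(U_T^{(m'_i,n'_i)}1_B\mid\mathcal{F}_{i-1})\big)\,d\mu$.

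The key analytic input is that for any fixed finite sub-$\sigma$-algebra $\mathcal{F}\subset\mathcal{B}_X$ the conditional expectations converge: since $U_T^{(m_i,n_i)}1_B\to S(1_B)$ weakly and $\mathbb{E}(\cdot\mid\mathcal{F})$ is an orthogonal projection, $\mathbb{E}(U_T^{(m_i,n_i)}1_B\mid\mathcal{F})\to\mathbb{E}(S(1_B)\mid\mathcal{F})$ weakly in $L^2(X,\mathcal{F},\mu)$, and as this space is finite-dimensional the convergence is in norm. Since $\psi$ is bounded and uniformly continuous on $[0,1]$ and all functions involved take values in $[0,1]$, the functional $g\mapsto\int_X\psi(g)\,d\mu$ is continuous along this convergence, so $\int_X\psi(\mathbb{E}(U_T^{(m_i,n_i)}1_B\mid\mathcal{F}))\,d\mu\to\int_X\psi(\mathbb{E}(S(1_B)\mid\mathcal{F}))\,d\mu$. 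The second ingredient is Jensen's inequality: as $\psi$ is concave, conditional Jensen gives $\mathbb{E}(\psi(S(1_B))\mid\mathcal{F})\le\psi(\mathbb{E}(S(1_B)\mid\mathcal{F}))$, and integrating yields $\int_X\psi(\mathbb{E}(S(1_B)\mid\mathcal{F}))\,d\mu\ge\int_X\psi(S(1_B))\,d\mu$.

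With these I would build $S'$ inductively. Since each fibre $\{n:(m,n)\in\Lambda^{\vec{v}}(b)\}$ is finite, the first coordinates of $S$ are unbounded, so I first pass to a subsequence of $S$ (which retains the same weak limit $S(\cdot)$) whose first coordinates are strictly monotone, after which monotonicity of $\{m'_i\}$ is inherited. Fixing $\varepsilon_k\downarrow 0$, at step $k$ the finite $\sigma$-algebra $\mathcal{F}_{k-1}$ is already determined, so by the convergence above I can choose $(m'_k,n'_k)$ far enough along the sequence that $\int_X\psi(\mathbb{E}(U_T^{(m'_k,n'_k)}1_B\mid\mathcal{F}_{k-1}))\,d\mu\ge\int_X\psi(\mathbb{E}(S(1_B)\mid\mathcal{F}_{k-1}))\,d\mu-\varepsilon_k/|\alpha|$ for every $B\in\alpha$. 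Combining with Jensen, each conditional entropy satisfies $H_\mu(T^{-(m'_k,n'_k)}\alpha\mid\mathcal{F}_{k-1})\ge\sum_{B\in\alpha}\int_X\psi(S(1_B))\,d\mu-\varepsilon_k$. Summing via the chain rule, dividing by $k$, and using that the Cesàro averages of $\varepsilon_k$ vanish gives $\liminf_k\frac1k H_\mu(\bigvee_{i=1}^k T^{-(m'_i,n'_i)}\alpha)\ge\sum_{B\in\alpha}\int_X\psi(S(1_B))\,d\mu$, whence $h^{S'}_\mu(T)\ge h^{S'}_\mu(T,\alpha)\ge\sum_{B\in\alpha}\int_X-S(1_B)\log S(1_B)\,d\mu$.

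The main obstacle is the interaction of two limits: the conditioning algebra $\mathcal{F}_{k-1}$ grows with $k$, so one cannot pass to a single limit at once. The induction resolves this by freezing $\mathcal{F}_{k-1}$ before selecting the next index, reducing each step to the fixed-$\mathcal{F}$ convergence, while the accumulated errors are controlled because $\varepsilon_k\to 0$ forces their Cesàro averages to vanish. A secondary point requiring care is the direction of the bound: it is precisely the concavity of $\psi$ that lets the coarser object $S(1_B)$ bound the conditioned object $\mathbb{E}(S(1_B)\mid\mathcal{F}_{k-1})$ from below, which is what makes the inequality point the right way.
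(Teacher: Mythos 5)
Your proposal is correct and follows essentially the same route as the paper's proof: a one-step lower bound $\liminf_i H_\mu(T^{-(m_i,n_i)}\alpha\mid\eta)\ge\sum_{B\in\alpha}\int_X -S(1_B)\log S(1_B)\,d\mu$ for each fixed finite partition $\eta$, obtained from the weak convergence $U_T^{(m_i,n_i)}1_B\to S(1_B)$ together with concavity of $-t\log t$, followed by an inductive selection of $S'$ and the entropy chain rule with vanishing Ces\`aro errors. Your phrasing via conditional expectations and conditional Jensen is just a repackaging of the paper's atom-by-atom computation, and your extra remarks (that $0\le S(1_B)\le 1$ by weak closedness of the order interval, and that monotonicity of $\{m'_i\}$ can be arranged because the fibres of $\Lambda^{\vec v}(b)$ are finite) fill in details the paper leaves implicit.
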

    \begin{proof}
    	Enumerate $\alpha$ as $\{B_1,\dots,B_k\}$.  Fix a finite measurable partition $\eta=\{D_1,\dots D_l\}$ of $X$. Let $S=\{(m_i,n_i)\}_{i=1}^{\infty} $ be define in \eqref{S1}. Then  
    	\begin{equation}\label{3.4}
    	\lim\limits_{i\to \infty}\langle U_T^{(m_i,n_i)}1_{B_s},1_{D_t}\rangle=\langle1_{D_t},S(1_{B_s})\rangle
    	\end{equation}
    	for any $s\in\{1,2,\ldots ,k\}$ and $t\in\{1,2,\ldots ,l\}$.
    	Therefore we deduce
    	\begin{align}\label{6}
    	\liminf_{i\to \infty}H_{\mu}(T^{-(m_i,n_i)}\alpha|\eta)\notag&=\liminf_{i\to \infty}\sum_{s,t}-\mu(T^{-(m_i,n_i)}B_s\cap D_t)\log{\lk(\frac{\mu(T^{-(m_i,n_i)}B_s\cap D_t)}{\mu(D_t)}\re)}\\
    	\notag&=\lim\limits_{i\to \infty}\sum_{s,t}-\langle U_T^{(m_i,n_i)}1_{B_s},1_{D_t}\rangle\log{\lk(\frac{\langle U_T^{(m_i,n_i)}1_{B_s},1_{D_t}\rangle}{\mu(D_t)}\re)}\\
    	\notag&\overset{\eqref{3.4}}=\sum_{s,t}-\langle S(1_{B_s}),1_{D_t}\rangle\log{\lk(\frac{\langle S(1_{B_s}),1_{D_t}\rangle}{\mu(D_t)}\re)}\\
    	&=\sum_{s,t}-\mu(D_t)\int_{D_t}\frac{S(1_{B_s})}{\mu(D_t)}d\mu\log{\lk(\int_{D_t}\frac{S(1_{B_s})}{\mu(D_t)}d\mu\re)}\\
    	\notag&\geq \sum_{s,t}-\mu(D_t)\int_{D_t}\frac{S(1_{B_s})}{\mu(D_t)}\log{(S(1_{B_s}))}d\mu\\
    	\notag&= \sum_s-\int_XS(1_{B_s})\log{\lk(S(1_{B_s})\re)}d\mu\\
    	\notag&=\sum_{B\in \alpha}-\int_XS(1_B)\log{\lk(S(1_B)\re)}d\mu,
    \end{align}
    	where in the inequality we use concavity of the function $-x\log{x}$.

    Now we can define inductively an infinite subset $S'=\{(m'_i,n'_i)\}_{i=1}^{\infty}$ of $ S$  by  using \eqref{6} repeatedly such that  $\{m'_i\}_{i=1}^{\infty}$ is strictly monotone and for any $i\in \mathbb{N}$, 
    	$$H_\mu\lk(T^{-(m'_i,n'_i)}\alpha|\bigvee_{j=1}^{i-1} T^{-(m'_j,n'_j)}\alpha\re)\geq \sum_{B\in \alpha}-\int_XS(1_B)\log{(S(1_B))}d\mu-\frac{1}{2^i}.$$
       	
     As $H_{\mu}\big(\bigvee_{i=1}^kT^{-(m_i,n_i)}\alpha\big)=H_{\mu}\big(\bigvee_{i=1}^{k-1}T^{-(m_i,n_i)}\alpha\big)+H_{\mu}\big(T^{-(m_k,n_k)}\alpha|\bigvee_{i=1}^{k-1}T^{-(m_i,n_i)}\alpha\big)$,  
     therefore
    	\begin{equation*}
    	\begin{split}
    h^{S'}_{\mu}(T)
    &=\limsup\limits_{k\to \infty}{\frac{1}{k}\sum_{i=1}^{k}H_\mu\lk(T^{-(m'_i,n'_i)}\alpha|\bigvee_{j=1}^{i-1} T^{-(m'_j,n'_j)}\alpha\re)}\\
    &\geq \limsup\limits_{k\to \infty}{\frac{1}{k}\sum_{i=1}^{k}\lk(\sum_{B\in \alpha}-\int_XS(1_B)\log{(S(1_B))}d\mu-\frac{1}{2^i}\re)}\\
    &=\sum_{B\in \alpha}-\int_XS(1_B)\log{(S(1_B))}d\mu.
    	\end{split}
    	\end{equation*}
   Now we finish the proof of Lemma \ref{lem-6}.
    \end{proof}

    For further discussion, we need a classcial consequence (\cite[Page 94]{Peter}).
    \begin{lem} \label{lem-2}Let $(X,\mathcal{B}_X,\mu)$ be a Borel probability space and $r\geq 1$ be a fixed integer. For each $\epsilon >0$, there exists  $\delta=\delta(\epsilon,r)>0$ such that if  $\alpha=\{A_1,A_2,\ldots,A_r\}$ and $\eta=\{B_1,B_2,\ldots,B_r\}$ are any two finite measurable partitions of $(X,\mathcal{B}_X,\mu)$ with $\sum_{j=1}^r\mu(A_j\Delta B_j)< \delta$ then $H_\mu(\alpha|\eta)+H_\mu(\eta|\alpha)<\epsilon$.
    \end{lem}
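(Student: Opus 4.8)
The plan is to deduce this soft continuity statement directly from the convexity properties of the function $\psi(t)=-t\log t$, extended by $\psi(0)=0$, which is nonnegative, concave and continuous on $[0,1]$ with $\psi(0)=\psi(1)=0$. We may assume $r\ge 2$, the case $r=1$ being trivial since then $\alpha=\eta=\{X\}$ and both conditional entropies vanish. Since the hypothesis $\sum_{j=1}^r\mu(A_j\Delta B_j)<\delta$ is symmetric in $\alpha$ and $\eta$, it suffices to produce $\delta=\delta(\epsilon,r)$ forcing $H_\mu(\alpha|\eta)<\epsilon/2$; the same bound then applies to $H_\mu(\eta|\alpha)$ with the roles of the two partitions reversed, and adding the two gives the claim. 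First I would discard the atoms of measure zero and write, with $s_{ij}=\mu(A_i\cap B_j)/\mu(B_j)$,
\[
H_\mu(\alpha|\eta)=-\sum_{i,j}\mu(A_i\cap B_j)\log\frac{\mu(A_i\cap B_j)}{\mu(B_j)}=\sum_{j}\mu(B_j)\sum_i\psi(s_{ij}),
\]
so that the inner sum is exactly the entropy of the conditional distribution $(s_{ij})_i$ of $\alpha$ on the atom $B_j$.

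The key point is that each such conditional distribution is concentrated near the index $i=j$. Indeed, set $\rho_j=1-s_{jj}=\mu(B_j\setminus A_j)/\mu(B_j)$, so that $\sum_i s_{ij}=1$ and $\sum_{i\ne j}s_{ij}=\rho_j$. Two elementary estimates for $\psi$ control the conditional entropy in terms of $\rho_j$ alone: from $-\log(1-\rho)\le \rho/(1-\rho)$ one gets $\psi(s_{jj})=\psi(1-\rho_j)\le\rho_j$, while the grouping inequality $\sum_{k=1}^N\psi(x_k)\le\psi\big(\sum_k x_k\big)+\big(\sum_k x_k\big)\log N$ (obtained by factoring $x_k=(\sum_k x_k)q_k$ and bounding the resulting entropy of $(q_k)$ by $\log N$) applied to the $r-1$ off-diagonal terms yields $\sum_{i\ne j}\psi(s_{ij})\le\psi(\rho_j)+\rho_j\log(r-1)$. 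Hence
\[
\sum_i\psi(s_{ij})\le \rho_j+\psi(\rho_j)+\rho_j\log(r-1).
\]

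To finish I would weight by $\mu(B_j)$ and sum, using that $\sum_j\mu(B_j)\rho_j=\sum_j\mu(B_j\setminus A_j)\le\sum_j\mu(A_j\Delta B_j)<\delta$ and that $\{\mu(B_j)\}_j$ is a probability vector. For the middle contribution, concavity of $\psi$ (Jensen) gives $\sum_j\mu(B_j)\psi(\rho_j)\le\psi\big(\sum_j\mu(B_j)\rho_j\big)\le\psi(\delta)$ once $\delta\le 1/e$, where $\psi$ is increasing. Altogether
\[
H_\mu(\alpha|\eta)\le \delta+\psi(\delta)+\delta\log(r-1),
\]
and since the right-hand side tends to $0$ as $\delta\to0$ and depends only on $\delta$ and $r$, any $\delta=\delta(\epsilon,r)\le1/e$ making it smaller than $\epsilon/2$ works. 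The symmetric bound for $H_\mu(\eta|\alpha)$ then completes the proof.

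I do not expect a serious obstacle here: the statement is a continuity fact, and every estimate above is phrased purely through the total symmetric-difference mass and the number of atoms $r$, which is precisely what guarantees the required uniformity of $\delta$ over all partitions and all probability spaces. The only points demanding a little care are the choice of the two elementary inequalities for $\psi$ near its endpoints $t=1$ and $t=0$, and the restriction $\delta\le 1/e$ needed to invoke monotonicity of $\psi$ after Jensen; both are routine once the decomposition into the diagonal term $\psi(s_{jj})$ and the off-diagonal terms is in place.
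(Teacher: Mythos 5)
Your proof is correct. Note that the paper itself does not prove this lemma at all: it is quoted as a classical fact with a citation to Walters (page 94), so there is no in-paper argument to compare against. Your argument is a clean, self-contained derivation and every step checks out: the identity $H_\mu(\alpha|\eta)=\sum_j\mu(B_j)\sum_i\psi(s_{ij})$, the endpoint bound $\psi(1-\rho)\le\rho$ (from $\log u\le u-1$), the grouping inequality $\sum_k\psi(x_k)\le\psi(s)+s\log N$ with $s=\sum_k x_k$, the identification $\sum_j\mu(B_j)\rho_j=\sum_j\mu(B_j\setminus A_j)\le\sum_j\mu(A_j\Delta B_j)$, and the Jensen step with the monotonicity restriction $\delta\le 1/e$. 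The resulting bound $\delta+\psi(\delta)+\delta\log(r-1)$ depends only on $\delta$ and $r$, which is exactly the uniformity the lemma asserts, and the symmetry reduction to a one-sided bound is legitimate since the hypothesis is symmetric in $\alpha$ and $\eta$. For comparison, the standard textbook proof (the one the paper points to) introduces the auxiliary partition $\gamma$ whose atoms are $\bigcup_i(A_i\cap B_i)$ together with the off-diagonal pieces $A_i\cap B_j$, $i\ne j$, observes $\alpha\vee\eta=\gamma\vee\eta$ so that $H_\mu(\alpha|\eta)\le H_\mu(\gamma)$, and then bounds $H_\mu(\gamma)$ by the entropy of a distribution with one large atom and at most $r^2-r$ atoms of total mass $<\delta$; your diagonal/off-diagonal split of the conditional distributions is the same idea executed atom-by-atom in $\eta$, at the cost of slightly more bookkeeping but with the benefit of making the quantitative dependence $\delta+\psi(\delta)+\delta\log(r-1)$ explicit.
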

    \medskip

   By Lemma \ref{lem-6} and Lemma \ref{lem-2}, we can get the following result.
   \begin{thm}\label{thm-11}
   	Let $(X,\mathcal{B}_X,\mu,T)$ be a $\mathbb{Z}^2$-MPS, $\vec{v}=(1,\beta)\in\mathbb{R}^2$ be a direction vector and $b\in (0,\infty)$. Let $B\in \mathcal{B}_X$ and $P$ be an infinite subset of $\Lambda^{\vec{v}}(b)$. Then the following conditions are equivalent.
   	\begin{itemize}
   		\item[(a)]$\overline{\lk\{U_T^{(m,n)}1_B:(m,n)\in P\re\}}$ is a compact subset of $L^2(X,\mathcal{B}_X,\mu)$.
   		\item[(b)]For any infinite subset $S'=\{(m'_i,n'_i)\}_{i=1}^{\infty}$ of $P$, $$h^{S'}_{\mu}(T,\{B,B^c\})=0.$$
   		\item[(c)]For any infinite subset $S'=\{(m'_i,n'_i)\}_{i=1}^{\infty}$ of $P$ that $\{m'_i\}_{i=1}^{\infty}$ is strictly monotone, $$h^{S'}_{\mu}(T,\{B,B^c\})=0.$$
   	\end{itemize}
   	
   \end{thm}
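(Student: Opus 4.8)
The plan is to prove the cycle of implications (a) $\Rightarrow$ (b) $\Rightarrow$ (c) $\Rightarrow$ (a); the step (b) $\Rightarrow$ (c) is immediate, since the partitions admitted in (c) range over a subclass of those admitted in (b).

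For (a) $\Rightarrow$ (b) I would run a counting argument. Fix an arbitrary infinite $S'=\{(m'_i,n'_i)\}_{i=1}^{\infty}\subset P$ and set $\beta_i=T^{-(m'_i,n'_i)}\{B,B^c\}$, so that the atom indicator of $\beta_i$ is $U_T^{(m'_i,n'_i)}1_B$. Given $\epsilon>0$, Lemma \ref{lem-2} (with $r=2$) yields $\delta>0$ such that two-set partitions whose atoms differ by less than $\delta$ in measure have conditional entropy below $\epsilon$; since $\mu(C\Delta C')=\|1_C-1_{C'}\|_2^2$, closeness of the shifted indicators in $L^2$ is exactly closeness of the partitions. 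Using (a) and the $\epsilon$-net characterization of compactness I cover $\{U_T^{(m,n)}1_B:(m,n)\in P\}$ by finitely many, say $N$, balls of small radius, sort the indices $1,\dots,k$ by the ball containing $U_T^{(m'_i,n'_i)}1_B$, and pick one representative index per ball. Writing $\Gamma$ for the join of the (at most $N$) representative partitions, I estimate
$$H_\mu\Big(\bigvee_{i=1}^k\beta_i\Big)\le H_\mu(\Gamma)+\sum_{i=1}^k H_\mu(\beta_i\mid\Gamma)\le N\log 2+k\epsilon,$$
using subadditivity, the monotonicity $H_\mu(\beta_i\mid\Gamma)\le H_\mu(\beta_i\mid\beta_{r(i)})<\epsilon$ (as $\Gamma$ refines the representative $\beta_{r(i)}$), and $H_\mu(\Gamma)\le N\log 2$. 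Dividing by $k$ and letting $k\to\infty$ gives $h^{S'}_\mu(T,\{B,B^c\})\le\epsilon$, hence $0$.

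For (c) $\Rightarrow$ (a) I would argue by contraposition, manufacturing from non-compactness a strictly monotone $S'\subset P$ with positive entropy. Since a closed bounded subset of $\mathcal{H}$ is compact precisely when it is totally bounded, failure of (a) produces $\epsilon_0>0$ and an infinite $\epsilon_0$-separated family $(m_i,n_i)\in P$. Because $\Lambda^{\vec v}(b)$ meets each vertical line $\{m\}\times\mathbb{Z}$ in only finitely many lattice points, any infinite subset of $P$ has infinitely many distinct first coordinates, so after passing to a subsequence I may assume $\{m_i\}$ is strictly monotone; a further diagonal extraction, using separability of $\mathcal{H}$ and weak compactness of the unit ball, makes $\lim_i\langle g,U_T^{(m_i,n_i)}f\rangle$ exist for all $f,g$, so that the operator $S(\cdot)$ of \eqref{S2} is defined along this sequence without destroying separation or monotonicity.

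The crux is to see that $\epsilon_0$-separation forces $S(1_B)$ to be non-trivial. Each $U_T^{(m_i,n_i)}1_B$ is $\{0,1\}$-valued, so the weak limit satisfies $0\le S(1_B)\le 1$ a.e., with $\int_X S(1_B)\,d\mu=\lim_i\mu(T^{-(m_i,n_i)}B)=\mu(B)$, and linearity of the weak limit gives $S(1_{B^c})=1-S(1_B)$. If $S(1_B)$ were an indicator $1_E$, then $\|U_T^{(m_i,n_i)}1_B-1_E\|_2^2\to\mu(B)-\mu(E)=0$ would force strong convergence, contradicting $\epsilon_0$-separation; hence $S(1_B)$ is not a.e. $\{0,1\}$-valued and $\int_X -S(1_B)\log S(1_B)\,d\mu>0$ since $-x\log x>0$ on $(0,1)$. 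Running the construction in the proof of Lemma \ref{lem-6} with $\alpha=\{B,B^c\}$ then yields a strictly monotone $S'\subset P$ with
$$h^{S'}_\mu(T,\{B,B^c\})\ge\sum_{D\in\{B,B^c\}}\int_X -S(1_D)\log S(1_D)\,d\mu>0,$$
contradicting (c). I expect the last direction to be the main obstacle: isolating the totally-bounded-versus-separated dichotomy, extracting strict monotonicity from the geometry of the strip $\Lambda^{\vec v}(b)$, and, above all, proving that a weakly convergent sequence of separated indicators cannot have an indicator limit, which is exactly what upgrades the bound of Lemma \ref{lem-6} from nonnegative to strictly positive.
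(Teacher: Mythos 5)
Your proposal is correct and follows essentially the same route as the paper: for (a) $\Rightarrow$ (b) a finite $\epsilon$-net of the orbit closure combined with Lemma \ref{lem-2} to kill the conditional entropies, and for (c) $\Rightarrow$ (a) an $\epsilon$-separated infinite family, the weak-limit operator $S(\cdot)$, Lemma \ref{lem-6}, and the observation that a separated sequence of indicators cannot converge weakly to an indicator of the same mass (the paper phrases this as: if $S_F(1_B)$ is an indicator then norms match and weak convergence upgrades to strong convergence, contradicting separation). The only differences are cosmetic — you bound $H_\mu\bigl(\bigvee_{i=1}^k\beta_i\bigr)$ via a global join $\Gamma$ of representatives rather than the chain rule over the first $s$ terms, and you run (c) $\Rightarrow$ (a) in contrapositive form — so no further comparison is needed.
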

   \begin{proof}
   	(a) $\Rightarrow$ (b). Let $\eta=\{B,B^c\}$. If $\overline{\lk\{U_T^{(m,n)}1_B:(m,n)\in P\re\}}$ is a compact subset of $L^2(X,\mathcal{B}_X,\mu)$, then for any infinite subset $S'=\{(m'_i,n'_i)\}_{i=1}^{\infty}$ of $P$,  $$\overline{\lk\{U_T^{(m',n')}1_B:(m',n')\in S'\re\}}$$ is a compact subset of $L^2(X,\mathcal{B}_X,\mu)$.  So for any $\delta >0$, there exists $s\in \mathbb{N}$ such that for any $(m'_i,n'_i)\in S'$,$$\mu\lk(T^{-(m'_i,n'_i)}B\Delta T^{-(m'_{j_i},n'_{j_i})}B\re)=\|U_T^{(m'_i,n'_i)}1_B-U_T^{(m'_{j_i},n'_{j_i})}1_B\|_2<\delta$$ for some $j_i\in \{1,2,\ldots,s\}$. It follows from Lemma \ref{lem-2} that for any $\epsilon>0$ and $ (m'_i,n'_i)\in S'$, there exists $ j_i\in \{1,2,\dots,s\}$ such that $$H_\mu\lk(T^{-(m'_i,n'_i)}\eta|T^{-(m'_{j_i},n'_{j_i})}\eta\re)+H_\mu\lk(T^{-(m'_{j_i},n'_{j_i})}\eta|T^{-(m'_i,n'_i)}\eta\re)<\epsilon.$$
   	Thus for any $ i>s$,   $$H_\mu\lk(T^{-(m'_i,n'_i)}\eta|\bigvee_{j=1}^{i-1}T^{-(m'_{j},n'_{j})}\eta\re)\leq H_\mu\lk(T^{-(m'_i,n'_i)}\eta|T^{-(m'_{j_i},n'_{j_i})}\eta\re)<\epsilon.$$
   	We conclude that
   	 $$h^{S'}_{\mu}(T,\eta)=\limsup\limits_{n\to \infty}\frac{1}{n}\sum_{i=2}^{n}H_\mu\lk(T^{-(m'_i,n'_i)}\eta|\bigvee_{j=1}^{i-1}T^{-(m'_{j},n'_{j})}\eta\re)\leq\epsilon.$$ Let $\epsilon \rightarrow 0.$ We obtain that $h^{S'}_{\mu}(T,\eta)=0.$
   	 
   	(b) $\Rightarrow$ (c). This is obvious.

   	(c) $\Rightarrow$ (a). If $\overline{\lk\{U_T^{(m,n)}1_B:(m,n)\in P\re\}}$ is not a compact subset of $L^2(X,\mathcal{B}_X,\mu)$, then there exists $\epsilon>0$ and an infinite subset $F$ of $P$ such that for all $ (m,n),(s,t)\in F$, 
   	\begin{equation}\label{2}
   	\mu\lk(T^{-(m,n)}B\Delta T^{-(s,t)}B\re)=\|U_T^{(m,n)}1_B-U_T^{(s,t)}1_B\|_2\geq \epsilon.
   	\end{equation}  

   	Similar to \eqref{S1}, we obtain an infinite subset $S_F=\{(m_i,n_i)\}_{i=1}^{\infty}$ of $F$ such that  $$\lim\limits_{i\to \infty}\langle g,U_T^{(m_i,n_i)}f\rangle =\langle g,S_F(f)\rangle $$ for any  $f,g\in \mathcal{H}.$ It follows from Lemma \ref{lem-6} that there exists an infinite subset $S'_F=\{(m'_i,n'_i)\}_{i=1}^{\infty}$ of $S_F$ such that $\{m'_i\}_{i=1}^{\infty}$ is strictly monotone and 
   	$$h^{S'_F}_{\mu}(T,\{B,B^c\})\geq \int_X\lk(-S_F(1_B)\log{(S_F(1_B))}-S_F(1_{B^c})\log{(S_F(1_{B^c}))}\re)d\mu.$$
   	As $h_{\mu}^{S'_F}(T,\{B,B^c\})=0$, it follows that $-S_F(1_B)\log{S_F(1_B)}=0$ for $\mu$-a.e. $x\in X$. So $S_F(1_B)$ is an indicator function and $$\langle 1_X,S_F(1_B)\rangle=\|S_F(1_B)\|_{2}.$$
   	Since $$\langle 1_X,1_B\rangle=\lim\limits_{i\to \infty}\langle 1_X,U_T^{(m_i,n_i)}1_B\rangle=\langle 1_X,S_F(1_B)\rangle,$$ it follows that $$\|S_F(1_B)\|_{2}=\|1_B\|_{2}.$$ Hence $$\lim\limits_{i\to \infty}\|U_T^{(m_i,n_i)}1_B-S_F(1_B)\|_{2}= 0.$$ This implies that for sufficiently large $i$ and $j$, $$\mu\lk(T^{-(m_i,n_i)}B\Delta T^{-(m_j,n_j)}B\re)< \frac{\epsilon}{2},$$ which contradicts \eqref{2}.
   	Therefore $\overline{\lk\{U_T^{(m,n)}1_B:(m,n)\in P\re\}}$ is a compact subset of $L^2(X,\mathcal{B}_X,\mu)$.
   \end{proof}
   \medskip

The following description of directional Kronecker algebra is obtained immediately from Theorem \ref{thm-11}.
\begin{cor}\label{prop-111}
	Let  $(X,\mathcal{B}_X,\mu,T)$ be a $\mathbb{Z}^2$-MPS, $\vec{v}=(1,\beta)\in\mathbb{R}^2$ be a direction vector and $b\in (0,\infty)$. Then for any $B\in\mathcal{B}_X$ the following statements are equivalent.
	\begin{itemize}
		\item[(a)]$B\in \mathcal{K}^{\vec{v}}_\mu$.
		\item[(b)]For any infinite subset $S=\{(m_i,n_i)\}_{i=1}^{\infty}$ of $\Lambda^{\vec{v}}(b)$, $$h^{S}_{\mu}(T,\{B,B^c\})=0.$$
		\item[(c)]For any infinite subset $S=\{(m_i,n_i)\}_{i=1}^{\infty}$ of  $\Lambda^{\vec{v}}(b)$ that $\{m_i\}_{i=1}^{\infty}$ is strictly monotone, $$h^{S}_{\mu}(T,\{B,B^c\})=0.$$
	\end{itemize}
Moreover, for any finite measurable partition $\alpha\subset \mathcal{K}^{\vec{v}}_{\mu}$ of $X$ and any infinite subset $S=\{(m_i,n_i)\}_{i=1}^{\infty}$ of $ \Lambda^{\vec{v}}(b),$
$$h^{S}_{\mu}(T,\alpha)=0.$$
\end{cor}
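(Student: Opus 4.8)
The plan is to read the corollary off Theorem \ref{thm-11} for the equivalence part, and to upgrade the implication (a)$\Rightarrow$(b) of that theorem from two-atom partitions to arbitrary finite partitions for the ``Moreover'' part.

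For the equivalence of (a), (b), (c), I would simply specialize Theorem \ref{thm-11} to $P=\Lambda^{\vec{v}}(b)$. By the definition of the directional Kronecker algebra together with Proposition \ref{prop-1}, the statement $B\in\mathcal{K}^{\vec{v}}_\mu$ means precisely that $\overline{\lk\{U_T^{(m,n)}1_B:(m,n)\in\Lambda^{\vec{v}}(b)\re\}}$ is compact in $L^2(X,\mathcal{B}_X,\mu)$, which is exactly condition (a) of Theorem \ref{thm-11} for $P=\Lambda^{\vec{v}}(b)$. Conditions (b) and (c) of the corollary are verbatim conditions (b) and (c) of Theorem \ref{thm-11} for this $P$. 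Hence the three statements are equivalent with nothing further to prove.

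For the ``Moreover'' part, let $\alpha=\{A_1,\dots,A_r\}$ with every $A_t\in\mathcal{K}^{\vec{v}}_\mu$ and let $S=\{(m_i,n_i)\}_{i=1}^{\infty}\subset\Lambda^{\vec{v}}(b)$. Since each $\overline{\lk\{U_T^{(m,n)}1_{A_t}:(m,n)\in\Lambda^{\vec{v}}(b)\re\}}$ is compact, the joint orbit $\{(U_T^{(m,n)}1_{A_1},\dots,U_T^{(m,n)}1_{A_r}):(m,n)\in\Lambda^{\vec{v}}(b)\}$ is totally bounded in the finite product of these $L^2$-spaces, and so is the subfamily indexed by $S$. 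Thus, given $\delta>0$, there is $s\in\mathbb{N}$ such that the first $s$ translates $\{T^{-(m_i,n_i)}\alpha\}_{i=1}^{s}$ form a $\delta$-net for $\{T^{-(m_i,n_i)}\alpha\}_{i\ge 1}$ in the partition metric $d(\xi,\eta)=\sum_{t}\mu(C_t\Delta D_t)$ (where $\xi=\{C_t\}$, $\eta=\{D_t\}$); that is, for every $i$ there is $j_i\le s$ with $\sum_{t=1}^{r}\mu\lk(T^{-(m_i,n_i)}A_t\,\Delta\, T^{-(m_{j_i},n_{j_i})}A_t\re)<\delta$. Now fix $\epsilon>0$, take $\delta=\delta(\epsilon,r)$ from Lemma \ref{lem-2}, so that $H_\mu\lk(T^{-(m_i,n_i)}\alpha\,\big|\,T^{-(m_{j_i},n_{j_i})}\alpha\re)<\epsilon$ for every $i$. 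For $i>s$ we have $j_i\le s<i$, so $T^{-(m_{j_i},n_{j_i})}\alpha$ is coarser than $\bigvee_{j=1}^{i-1}T^{-(m_j,n_j)}\alpha$, and monotonicity of conditional entropy under refinement gives
$$H_\mu\lk(T^{-(m_i,n_i)}\alpha\,\Big|\,\bigvee_{j=1}^{i-1}T^{-(m_j,n_j)}\alpha\re)\le H_\mu\lk(T^{-(m_i,n_i)}\alpha\,\big|\,T^{-(m_{j_i},n_{j_i})}\alpha\re)<\epsilon .$$
Using the telescoping identity for the entropy of a join and bounding the first $s$ terms by $s\log r$, I obtain $\frac{1}{k}H_\mu\lk(\bigvee_{i=1}^{k}T^{-(m_i,n_i)}\alpha\re)\le \frac{s\log r}{k}+\epsilon$; letting $k\to\infty$ yields $h^{S}_\mu(T,\alpha)\le\epsilon$, and letting $\epsilon\to 0$ gives $h^{S}_\mu(T,\alpha)=0$.

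The main obstacle I anticipate is the passage from a single set $B$ (a two-atom partition) in Theorem \ref{thm-11} to a partition with $r$ atoms: one must take the $\delta$-net simultaneously in all $r$ coordinates and ensure the conditioning partition sits inside the growing join $\bigvee_{j<i}T^{-(m_j,n_j)}\alpha$. Both are arranged by choosing the net among the first $s$ elements of $S$ itself, exactly as in the proof of Theorem \ref{thm-11}; the remainder is routine bookkeeping with subadditivity/telescoping, monotonicity under refinement, and Lemma \ref{lem-2}. Alternatively, one could condition throughout on the fixed finite refinement $\bigvee_{i=1}^{s}T^{-(m_i,n_i)}\alpha$ and bound the join entropy by $H_\mu\lk(\bigvee_{i=1}^{s}T^{-(m_i,n_i)}\alpha\re)+k\epsilon$, reaching the same conclusion.
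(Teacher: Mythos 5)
Your proof is correct and matches the paper's intent: the paper derives this corollary directly from Theorem \ref{thm-11} with $P=\Lambda^{\vec{v}}(b)$ (using Proposition \ref{prop-1} so that $B\in\mathcal{K}^{\vec{v}}_\mu$ is exactly condition (a) there), and your treatment of the ``Moreover'' part is a correct re-run, for $r$-atom partitions, of the same $\delta$-net argument used in (a)$\Rightarrow$(b) of that theorem. The only remark worth making is that the ``Moreover'' part also follows in one line from the two-atom case already established, via $\alpha\preceq\bigvee_{t}\{A_t,A_t^c\}$ and subadditivity of $h^{S}_{\mu}(T,\cdot)$ over joins of partitions, which is presumably why the paper calls the corollary immediate.
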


\begin{rem}
By Corollary $\ref{prop-111}$, we immediately obtain 
$$\mathcal{K}_\mu^{\vec{v}}=\left\{B\in\mathcal{B}_X: \text{for any infinite subset }S\text{ of }  \Lambda^{\vec{v}}(b),\text{ } h^{S}_{\mu}(T,\{B,B^c\})=0 \right\}.$$

\end{rem}




\medskip

\section{Direcitonal discrete spectrum systems and null systems}
    \medskip
    In this section, we prove Theorem \ref{thm-3}, which is divided into two lemmas. Moreover,  we prove the directional version of Ku\v shnirenko theorem, that is, Theorem \ref{thm2}. For this purpose, we begin with the following property of directional sequence entropy.

    \begin{thm}\label{thm-1}
	Let  $(X,\mathcal{B}_X,\mu,T)$ be a $\mathbb{Z}^2$-MPS, $\vec{v}=(1,\beta)\in\mathbb{R}^2$ be a direction vector and $b\in (0,\infty)$. Suppose that $\{\alpha_n\}_{n=1}^{\infty}$ satisfying $\alpha_n\nearrow\mathcal{B}_X$, is a family of finite measurable partitions of $X$. Then for any infinite subset $S=\{(m_i,n_i)\}_{i=1}^{\infty} $ of $ \Lambda^{\vec{v}}(b),$ $$ \lim\limits_{n\to \infty}h^{S}_{\mu}(T,\alpha_n)=h^{S}_{\mu}(T).$$
	\end{thm}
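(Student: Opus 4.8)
The plan is to prove the equality by sandwiching $\lim_{n\to\infty}h^S_\mu(T,\alpha_n)$ between $h^S_\mu(T)$ on both sides, using monotonicity of directional sequence entropy in the partition variable together with the approximation fact $H_\mu(\beta\mid\alpha_n)\to 0$. First I would record the elementary monotonicity: if a partition $\gamma$ refines $\gamma'$, then $\bigvee_{i=1}^k T^{-(m_i,n_i)}\gamma$ refines $\bigvee_{i=1}^k T^{-(m_i,n_i)}\gamma'$, so the corresponding entropies satisfy the same inequality and hence $h^S_\mu(T,\gamma')\le h^S_\mu(T,\gamma)$. Since $\alpha_n\nearrow\mathcal{B}_X$, the sequence $\{h^S_\mu(T,\alpha_n)\}_n$ is nondecreasing and bounded above by $h^S_\mu(T)=\sup_\alpha h^S_\mu(T,\alpha)$; thus the limit exists and $\lim_{n\to\infty}h^S_\mu(T,\alpha_n)\le h^S_\mu(T)$, which is the easy half.

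The substance is the reverse inequality, for which the key estimate is that for any two finite measurable partitions $\alpha,\beta$ of $X$,
$$h^S_\mu(T,\beta)\le h^S_\mu(T,\alpha)+H_\mu(\beta\mid\alpha).$$
To establish it I would fix $k$ and use the standard identity $H_\mu(\mathcal{C}\vee\mathcal{D})\le H_\mu(\mathcal{D})+H_\mu(\mathcal{C}\mid\mathcal{D})$ to write
$$H_\mu\left(\bigvee_{i=1}^k T^{-(m_i,n_i)}\beta\right)\le H_\mu\left(\bigvee_{i=1}^k T^{-(m_i,n_i)}\alpha\right)+H_\mu\left(\bigvee_{i=1}^k T^{-(m_i,n_i)}\beta \;\Big|\; \bigvee_{i=1}^k T^{-(m_i,n_i)}\alpha\right),$$
and then bound the conditional term by subadditivity of conditional entropy followed by the monotonicity $H_\mu(\mathcal{P}\mid\mathcal{F})\le H_\mu(\mathcal{P}\mid\mathcal{G})$ for $\mathcal{G}\subseteq\mathcal{F}$ and the $T$-invariance of $\mu$:
$$H_\mu\left(\bigvee_{i=1}^k T^{-(m_i,n_i)}\beta \;\Big|\; \bigvee_{i=1}^k T^{-(m_i,n_i)}\alpha\right)\le \sum_{i=1}^k H_\mu\left(T^{-(m_i,n_i)}\beta \mid T^{-(m_i,n_i)}\alpha\right)=kH_\mu(\beta\mid\alpha).$$
Dividing by $k$ and taking $\limsup_{k\to\infty}$ (a constant passes through the $\limsup$) yields the displayed key estimate.

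Finally I would apply the key estimate with $\alpha=\alpha_n$ to get $h^S_\mu(T,\beta)\le h^S_\mu(T,\alpha_n)+H_\mu(\beta\mid\alpha_n)$ for every finite partition $\beta$. Since $\alpha_n\nearrow\mathcal{B}_X$, the increasing martingale theorem gives $\mathbb{E}(1_B\mid\alpha_n)\to 1_B$ in $L^1$ for each $B\in\beta$, whence $H_\mu(\beta\mid\alpha_n)\to H_\mu(\beta\mid\mathcal{B}_X)=0$. Letting $n\to\infty$ gives $h^S_\mu(T,\beta)\le \lim_{n\to\infty}h^S_\mu(T,\alpha_n)$, and taking the supremum over all finite partitions $\beta$ yields $h^S_\mu(T)\le \lim_{n\to\infty}h^S_\mu(T,\alpha_n)$. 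Combined with the easy half this proves the theorem. The main obstacle I anticipate is the convergence $H_\mu(\beta\mid\alpha_n)\to 0$, which must be justified via martingale convergence for the increasing sequence of $\sigma$-algebras generated by the $\alpha_n$ (Lemma \ref{lem-2} offers an alternative, purely partition-theoretic route to the same approximation); the entropy inequalities themselves are routine but require mild care because $h^S_\mu$ is defined with a $\limsup$ rather than a genuine limit.
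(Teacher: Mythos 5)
Your proposal is correct and follows essentially the same route as the paper's proof: the key estimate $h^S_\mu(T,\eta)\le h^S_\mu(T,\alpha_n)+H_\mu(\eta\mid\alpha_n)$ obtained from $H_\mu\bigl(\bigvee_i T^{-(m_i,n_i)}\eta\bigr)\le H_\mu\bigl(\bigvee_i T^{-(m_i,n_i)}\alpha_n\bigr)+kH_\mu(\eta\mid\alpha_n)$ via subadditivity, monotonicity of conditional entropy and $T$-invariance, followed by the martingale convergence theorem to send $H_\mu(\eta\mid\alpha_n)\to 0$. The only cosmetic difference is that you first establish the limit's existence by monotonicity in the partition, whereas the paper sandwiches $\liminf$ and $\limsup$ directly; both are fine.
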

\begin{proof}
For any finite measurable partition $\eta$ of $X$ and $k,n \in \mathbb{N}$, 
\begin{align*}
H_\mu \left(\bigvee_{i=1}^k T ^{-(m_i,n_i)} \eta\right)
&\leq H_\mu \left(\bigvee_{i=1}^k T^{-(m_i,n_i)} (\eta \vee \alpha_n)\right)\\
&= H_\mu \left(\bigvee_{i=1}^k T^{-(m_i,n_i)} \alpha_n\right)
 + H_\mu \left(\bigvee_{i=1}^k T^{-(m_i,n_i)} \eta|\bigvee_{i=1}^k T ^{-(m_i,n_i)} \alpha_n\right)\\
&\leq H_\mu \lk(\bigvee_{i=1}^k T^{-(m_i,n_i)} \alpha_n\re) + \sum_{i=1}^k H_\mu \lk(T^{-(m_i,n_i)} \eta|\bigvee_{i=1}^k T^{-(m_i,n_i)} \alpha_n\re)\\
&\leq H_\mu \lk(\bigvee_{i=1}^k T^{-(m_i,n_i)} \alpha_n\re)+ kH_\mu\lk(\eta | \alpha_n\re).
\end{align*}
Divided  by $k$ from both sides and take the upper limits when $k\to \infty$, we can get $$h^{S}_{\mu}(T,\eta)\leq h^{S}_{\mu}(T,\alpha_n)+H_\mu(\eta|\alpha_n).$$
 By Martingale convergence theorem \cite{G},   $$\lim\limits_{n\to \infty} H_\mu(\eta|\alpha_n)= H_\mu(\eta|\mathcal{B}_X)=0.$$ It follows that $$h^{S}_{\mu}(T,\eta)\leq \liminf\limits_{n\to \infty} h^{S}_{\mu}(T,\alpha_n)\leq \limsup\limits_{n\to \infty} h^{S}_{\mu}(T,\alpha_n) \leq h^{S}_{\mu}(T).$$ Therefore
 \begin{equation*}
 	h^{S}_{\mu}(T) =\sup\limits_\eta h^{S}_{\mu}(T,\eta)=\lim\limits_{n\to \infty}h^{S}_{\mu}(T,\alpha_n),
 \end{equation*}
 where the supremum is taken over all finite measurable partitions of $X$
\end{proof}

\medskip	
Now we are able to prove Theorem \ref{thm-3}. For clarity, let us divide the proof into two lemmas.
\begin{lem} \label{lem-4}
Let $(X,\mathcal{B}_X,\mu,T)$ be a $\mathbb{Z}^2$-MPS, $\vec{v}=(1,\beta)\in\mathbb{R}^2$ be a  direction vector and $b\in (0,\infty)$. For any finite measurable partition $\alpha$ of $X$ and  infinite subset $S=\{(m_i,n_i)\}_{i=1}^{\infty}$ of $\Lambda^{\vec{v}}(b)$,  $$h^{S}_{\mu}(T,\alpha)\leq H_\mu(\alpha|\mathcal{K}^{\vec{v}}_\mu).$$
\end{lem}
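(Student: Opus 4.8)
The plan is to bound the unconditional growth rate of $\bigvee_{i=1}^k T^{-(m_i,n_i)}\alpha$ by inserting a finite $\mathcal{K}_\mu^{\vec{v}}$-measurable partition $\xi$, using that such a $\xi$ contributes nothing to the directional sequence entropy (by Corollary \ref{prop-111}), and then letting $\xi$ exhaust $\mathcal{K}_\mu^{\vec{v}}$. First I would fix an arbitrary finite measurable partition $\xi$ of $X$ with $\xi\subset\mathcal{K}_\mu^{\vec{v}}$. For each $k$, enlarging the partition and then applying the chain rule $H_\mu(\beta\vee\gamma)=H_\mu(\gamma)+H_\mu(\beta|\gamma)$ with $\gamma=\bigvee_{i=1}^k T^{-(m_i,n_i)}\xi$ gives
\begin{align*}
H_\mu\lk(\bigvee_{i=1}^k T^{-(m_i,n_i)}\alpha\re)
&\leq H_\mu\lk(\bigvee_{i=1}^k T^{-(m_i,n_i)}(\alpha\vee\xi)\re)\\
&= H_\mu\lk(\bigvee_{i=1}^k T^{-(m_i,n_i)}\xi\re)
 + H_\mu\lk(\bigvee_{i=1}^k T^{-(m_i,n_i)}\alpha \,\Big|\, \bigvee_{i=1}^k T^{-(m_i,n_i)}\xi\re).
\end{align*}

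Then I would estimate the two terms separately. For the conditional term, subadditivity of conditional entropy together with the fact that conditioning on a larger $\sigma$-algebra decreases entropy yields
\begin{align*}
H_\mu\lk(\bigvee_{i=1}^k T^{-(m_i,n_i)}\alpha \,\Big|\, \bigvee_{i=1}^k T^{-(m_i,n_i)}\xi\re)
\leq \sum_{i=1}^k H_\mu\lk(T^{-(m_i,n_i)}\alpha \,\big|\, T^{-(m_i,n_i)}\xi\re)
= k\,H_\mu(\alpha|\xi),
\end{align*}
where the last equality holds because each $T^{(m_i,n_i)}$ is measure preserving. For the first term, since $\xi\subset\mathcal{K}_\mu^{\vec{v}}$, the ``moreover'' part of Corollary \ref{prop-111} gives $h^S_\mu(T,\xi)=0$, that is, $\frac{1}{k}H_\mu\big(\bigvee_{i=1}^k T^{-(m_i,n_i)}\xi\big)\to 0$ as $k\to\infty$ (the $\limsup$ being zero forces the limit to be zero by nonnegativity).

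Dividing the displayed inequality by $k$ and taking $\limsup_{k\to\infty}$ would then give $h^S_\mu(T,\alpha)\leq H_\mu(\alpha|\xi)$ for every finite $\xi\subset\mathcal{K}_\mu^{\vec{v}}$. The final step is to let $\xi$ run through an increasing sequence of finite partitions generating $\mathcal{K}_\mu^{\vec{v}}$ and invoke the standard decreasing convergence $H_\mu(\alpha|\xi)\searrow H_\mu(\alpha|\mathcal{K}_\mu^{\vec{v}})$, yielding $h^S_\mu(T,\alpha)\leq H_\mu(\alpha|\mathcal{K}_\mu^{\vec{v}})$. The one place requiring genuine input, rather than routine entropy bookkeeping, is the vanishing of the $\xi$-term: this is exactly where the compactness built into the directional Kronecker algebra enters, channeled through Corollary \ref{prop-111}; everything else is monotonicity, subadditivity, and invariance of static conditional entropy.
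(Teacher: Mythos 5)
Your proposal is correct and follows essentially the same route as the paper: insert a finite $\mathcal{K}_\mu^{\vec{v}}$-measurable partition, use the ``moreover'' part of Corollary \ref{prop-111} to kill its contribution, bound the conditional term by $k\,H_\mu(\alpha|\xi)$ via subadditivity and invariance, and pass to the limit over partitions approximating $\mathcal{K}_\mu^{\vec{v}}$. The only cosmetic difference is that the paper selects in advance a sequence $\eta_k\subset\mathcal{K}_\mu^{\vec{v}}$ with $H_\mu(\alpha|\eta_k)\to H_\mu(\alpha|\mathcal{K}_\mu^{\vec{v}})$ by separability, whereas you invoke an increasing generating sequence and martingale-type convergence; these are interchangeable.
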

\begin{proof}
Since $(X,\mathcal{B}_X)$ is separable, there exist countably many finite measurable partitions  $\{\eta_k\}_{k\in\mathbb{N}}\subset \mathcal{K}^{\vec{v}}_\mu $ such that $$\lim\limits_{k \rightarrow \infty }H_{\mu}(\alpha|\eta_k)=H_\mu(\alpha|\mathcal{K}^{\vec{v}}_\mu).$$ For a fixed $k\in\mathbb{N}$ and an infinite subset $S=\{(m_i,n_i)\}_{i=1}^{\infty}$ of $\Lambda^{\vec{v}}(b)$, one has 
\begin{equation}\label{4.2}
\lim\limits_{l\to \infty}\frac{1}{l}H_\mu\left(\bigvee_{i=1}^lT^{-\left(m_i,n_i\right)}\eta_k\right)=0
\end{equation}
by Corollary \ref{prop-111}.
Therefore we conclude
\begin{align*}
h^{S}_{\mu}\left(T,\alpha\right)
&=\limsup\limits_{l\to \infty}\frac{1}{l}H_\mu\left(\bigvee_{i=1}^lT^{-\left(m_i,n_i\right)}\alpha\right)\\
&\leq \limsup\limits_{l\to \infty}{\frac{1}{l}H_\mu\left(\bigvee_{i=1}^lT^{-\left(m_i,n_i\right)}\left(\alpha \vee \eta_k\right)\right)}\\
&\overset{\eqref{4.2}}= \limsup\limits_{l\to \infty}{\frac{1}{l}H_\mu\left(\bigvee_{i=1}^lT^{-\left(m_i,n_i\right)}\left(\alpha \vee \eta_k\right)\right)}-\lim\limits_{l\to \infty}\frac{1}{l}H_\mu\left(\bigvee_{i=1}^lT^{-\left(m_i,n_i\right)}\eta_k\right)\\
&=\limsup\limits_{l\to \infty}{\frac{1}{l}H_\mu\left(\bigvee_{i=1}^lT^{-\left(m_i,n_i\right)}\alpha|\bigvee_{i=1}^lT^{-\left(m_i,n_i\right)}\eta_k\right)}\\
&\leq \limsup\limits_{l\to \infty}{\frac{1}{l}\sum_{i=1}^lH_\mu\left(T^{-\left(m_i,n_i\right)}\alpha|T^{-\left(m_i,n_i\right)}\eta_k\right)}\\
&=H_\mu\left(\alpha|\eta_k\right).
\end{align*}

Finally, let $k\rightarrow\infty$. We get
$$h^{S}_{\mu}(T,\alpha)\leq H_\mu(\alpha|\mathcal{K}^{\vec{v}}_\mu).$$
Now we finish the proof of Lemma \ref{lem-4}.
\end{proof}

For further proof, we need the following result (\cite[Page 69]{RA}).
\begin{lem} \label{lem-5}
Let $(X,\mathcal{B}_X,\mu)$ and $(Y,\mathcal{B}_Y,\nu)$ be two Borel probability spaces and $(X\times Y,\mathcal{B}_{X\times Y},\mu\times \nu)$ be their product Borel probability space. Then for any $B\subset X\times Y$, the following two statements are equivalent.
	\begin{itemize}
	\item[(a)]$B_x:=\{y\in Y:(x,y)\in B\}\in \mathcal{B}_Y$  for $\mu$-a.e. $x\in X$, $B^y:=\{x\in X|(x,y)\in B\}\in \mathcal{B}_X$ for $\nu$-a.e.  $y\in Y$.
	\item[(b)] $B\in \mathcal{B}_{X\times Y}$.
\end{itemize}

\end{lem}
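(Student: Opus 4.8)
The plan is to prove the two implications separately. The forward direction (b)$\,\Rightarrow\,$(a) rests on the good-sets principle (equivalently, the monotone class theorem) together with Tonelli's theorem, while (a)$\,\Rightarrow\,$(b) is the delicate converse. Since $X$ and $Y$ are second countable, $\mathcal{B}_X\otimes\mathcal{B}_Y$ coincides with the Borel $\sigma$-algebra of $X\times Y$, and throughout I read $\mathcal{B}_{X\times Y}$ as its $\mu\times\nu$-completion, which is what makes the a.e.\ formulation in (a) the natural one.

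For (b)$\,\Rightarrow\,$(a) the core fact is that every $B\in\mathcal{B}_X\otimes\mathcal{B}_Y$ has a measurable section at \emph{every} point. To see this, let $\mathcal{G}$ be the collection of all $B\in\mathcal{B}_X\otimes\mathcal{B}_Y$ with $B_x\in\mathcal{B}_Y$ for every $x\in X$ and $B^y\in\mathcal{B}_X$ for every $y\in Y$. Every measurable rectangle $A\times C$ lies in $\mathcal{G}$, since $(A\times C)_x$ is either $C$ or $\emptyset$, and $\mathcal{G}$ is a $\sigma$-algebra because sectioning commutes with complementation and countable unions, i.e. $(B^c)_x=(B_x)^c$ and $\left(\bigcup_n B_n\right)_x=\bigcup_n (B_n)_x$. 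Hence $\mathcal{G}=\mathcal{B}_X\otimes\mathcal{B}_Y$. To pass to the completion, write a general $B\in\mathcal{B}_{X\times Y}$ as $B=C\cup N$ with $C\in\mathcal{B}_X\otimes\mathcal{B}_Y$ and $N\subseteq M$ for some product-measurable $M$ with $(\mu\times\nu)(M)=0$. Applying Tonelli to $1_M$ gives $\int_X\nu(M_x)\,d\mu(x)=0$, so $\nu(M_x)=0$ for $\mu$-a.e.\ $x$; for such $x$ the section $N_x\subseteq M_x$ lies in a $\nu$-null set and is therefore measurable, whence $B_x=C_x\cup N_x\in\mathcal{B}_Y$ for $\mu$-a.e.\ $x$. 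The symmetric computation in the other variable yields $B^y\in\mathcal{B}_X$ for $\nu$-a.e.\ $y$, which is exactly (a).

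The converse (a)$\,\Rightarrow\,$(b) is where I expect the main obstacle. The natural strategy is to enclose $B$ in a product-measurable hull: choose $C\in\mathcal{B}_X\otimes\mathcal{B}_Y$ with $C\supseteq B$ and $(\mu\times\nu)(C)$ equal to the outer measure of $B$, and then try to show $(\mu\times\nu)^*(C\setminus B)=0$, which would exhibit $B$ as a product-measurable set minus a null set and hence place it in the completion. The difficulty is that a.e.-measurability of the individual sections gives no \emph{a priori} global control on $B$; the sectionwise data must be promoted to joint measurability, and this is precisely where the standard-Borel (compact metric) structure of $X$ and $Y$, together with the regularity of $\mu$ and $\nu$, enter, as encoded in \cite[Page 69]{RA}. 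I would therefore carry out the hull construction explicitly and use inner/outer regularity to control the exceptional sets, flagging this fibrewise-to-jointly-measurable passage as the step most likely to require care.
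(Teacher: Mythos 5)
The paper does not actually prove this lemma; it is quoted verbatim as a known result from Folland's \emph{Real Analysis} (the cited page contains only the implication (b)$\Rightarrow$(a)), so there is no in-paper proof to compare against. Your argument for (b)$\Rightarrow$(a) is correct and standard: the good-sets argument handles $\mathcal{B}_X\otimes\mathcal{B}_Y$, and the Tonelli computation $\int_X\nu(M_x)\,d\mu(x)=0$ disposes of the null part when passing to the completion. That half is complete.

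The gap is in (a)$\Rightarrow$(b), where you only outline a strategy (enclose $B$ in a product-measurable hull $C$ and try to show $(\mu\times\nu)^*(C\setminus B)=0$) and flag the fibrewise-to-jointly-measurable passage as the step needing care. That step cannot be carried out, because the implication is false: it is not a theorem of ZFC. Under the continuum hypothesis, take a well-ordering $\preceq$ of $[0,1]$ of order type $\omega_1$ and let $B=\{(x,y)\in[0,1]^2:y\preceq x\}$. Every vertical section $B_x$ is countable and every horizontal section $B^y$ is co-countable, so all sections are Borel and condition (a) holds with no exceptional set at all; yet $B$ cannot lie in the completed product $\sigma$-algebra, since Tonelli would then force $(\mu\times\nu)(B)=\int\nu(B_x)\,d\mu=0$ and $(\mu\times\nu)(B)=\int\mu(B^y)\,d\nu=1$ simultaneously. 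No hull construction, regularity argument, or appeal to the standard Borel structure of $X$ and $Y$ can repair this: measurability of almost every section does not determine joint measurability. Your instinct that this direction is the obstacle was right, but the honest conclusion is that the equivalence as stated is wrong, and only the (b)$\Rightarrow$(a) half (which is all Folland asserts, and arguably more than the paper is entitled to use later in Step 1 of its Lemma 4.5) can be proved.
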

\medskip

\begin{lem}\label{lem-10}
Let $(X,\mathcal{B}_X,\mu,T)$ be a $\mathbb{Z}^2$-MPS, $\vec{v}=(1,\beta)\in\mathbb{R}^2$ be a direction vector and $b\in (0,\infty)$. Then for any finite measurable partition $\alpha$ of $X$, there is an infinite subset $S=\{(m_i,n_i)\}_{i=1}^{\infty}$ of $ \Lambda^{\vec{v}}(b)$ such that $\{m_i\}_{i=1}^{\infty}$ is strictly monotone and $$h^{S}_{\mu}(T,\alpha)\geq H_\mu(\alpha|\mathcal{K}^{\vec{v}}_\mu).$$
\end{lem}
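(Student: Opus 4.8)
I need to produce a lower bound matching Lemma~\ref{lem-4}: for a suitable sequence $S$ in $\Lambda^{\vec v}(b)$ with strictly monotone first coordinates, $h^S_\mu(T,\alpha) \ge H_\mu(\alpha\mid\mathcal K^{\vec v}_\mu)$. Lemma~\ref{lem-6} already gives a lower bound of the form $h^{S'}_\mu(T,\alpha) \ge \sum_{B\in\alpha}\int_X -S(1_B)\log S(1_B)\,d\mu$, where $S(1_B) = \mathbb E$-type limit operator built from a weakly convergent subsequence. So the real content is to identify that integral expression with the conditional entropy $H_\mu(\alpha\mid\mathcal K^{\vec v}_\mu)$, or at least to bound it below by that quantity for a good choice of the weak limit.

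**The plan.** First I would recall that $S(1_B)$ is the weak limit (in the sense of \eqref{S2}) of $U_T^{(m_i,n_i)}1_B$ along the chosen sequence, so $S$ is a positive contraction with $S(1_X)=1_X$ and $\sum_{B\in\alpha}S(1_B)=1_X$ pointwise a.e. The quantity $\sum_{B\in\alpha}\int_X -S(1_B)\log S(1_B)\,d\mu$ is then the integral of the entropy of the "averaged" probability vector $(S(1_B))_{B\in\alpha}$. I want to show this equals $H_\mu(\alpha\mid\mathcal K^{\vec v}_\mu)$, which is $\int_X -\sum_{B\in\alpha}\mathbb E(1_B\mid\mathcal K^{\vec v}_\mu)\log \mathbb E(1_B\mid\mathcal K^{\vec v}_\mu)\,d\mu$. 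The key is therefore to choose the weakly convergent sequence so that the limit operator $S$ acts on $1_B$ as the conditional expectation onto $\mathcal K^{\vec v}_\mu$: that is, $S(1_B) = \mathbb E(1_B\mid\mathcal K^{\vec v}_\mu)$. I would argue this as follows. Decompose $L^2(\mu) = L^2(\mathcal K^{\vec v}_\mu,\mu)\oplus (L^2(\mathcal K^{\vec v}_\mu,\mu))^\perp = \mathcal A^{\vec v}_c(b)\oplus \mathcal A^{\vec v}_c(b)^\perp$. On the almost-periodic part $\mathcal A^{\vec v}_c(b)$, the orbit $\{U_T^{(m,n)}f\}$ is precompact; on the orthogonal complement, one expects the weak limit of $U_T^{(m,n)}f$ along a density-one or suitably chosen subsequence to be zero (a directional analogue of the Koopman–von Neumann splitting). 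I would select the sequence $S$ from \eqref{S1} so that on the compact part it converges and the limit restricted to $\mathcal K^{\vec v}_\mu$ recovers the conditional expectation, while the weak limit kills the orthogonal complement.

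**The main obstacle and how to handle it.** The hard part is precisely the point flagged in the introduction: the Koopman–von Neumann spectral mixing theorem (Theorem~\ref{knt}) applies to genuine group actions, but $\Lambda^{\vec v}(b)$ is a \emph{strip}, not a subgroup, so I cannot directly invoke a density-one subset on which the complement part weakly vanishes. I would circumvent this by engineering the subsequence by hand using a diagonal/Riesz-functional argument: since $\mathcal H$ is separable, pass to a subsequence along which $\langle g, U_T^{(m_i,n_i)}f\rangle$ converges for all $f,g$ in a countable dense set (this is exactly how $S$ is built in \eqref{S1}–\eqref{S2}), and then show the limit operator $S$ satisfies $S(1_B)=\mathbb E(1_B\mid\mathcal K^{\vec v}_\mu)$ by verifying (i) $S(1_B)$ is $\mathcal K^{\vec v}_\mu$-measurable and (ii) $\langle S(1_B),g\rangle = \langle 1_B,g\rangle$ for every $\mathcal K^{\vec v}_\mu$-measurable $g$. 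For (ii), when $g\in\mathcal A^{\vec v}_c(b)$ the orbit is precompact so one can extract, along the same sequence, convergence $U_T^{(m_i,n_i)}g \to g'$ with $\|g'\|=\|g\|$ and use the isometry-on-the-compact-part to transfer inner products; for the complement one uses that weak limits of orbit elements land in $\mathcal A^{\vec v}_c(b)$.

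**Concluding the estimate.** Once $S(1_B)=\mathbb E(1_B\mid\mathcal K^{\vec v}_\mu)$ is established, Lemma~\ref{lem-6} yields an infinite subset $S'\subset S$ with strictly monotone first coordinates and
$$
h^{S'}_\mu(T,\alpha) \ge \sum_{B\in\alpha}\int_X -\mathbb E(1_B\mid\mathcal K^{\vec v}_\mu)\log \mathbb E(1_B\mid\mathcal K^{\vec v}_\mu)\,d\mu = H_\mu(\alpha\mid\mathcal K^{\vec v}_\mu),
$$
the last equality being the standard formula for conditional entropy relative to a sub-$\sigma$-algebra. Combining this with Lemma~\ref{lem-4} gives the matching upper bound and completes the proof of Lemma~\ref{lem-10} (and hence Theorem~\ref{thm-3}). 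I expect the bulk of the write-up to go into justifying the identification $S(1_B)=\mathbb E(1_B\mid\mathcal K^{\vec v}_\mu)$, since everything downstream is a direct consequence of Lemma~\ref{lem-6} and the concavity argument already carried out there.
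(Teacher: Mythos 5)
Your overall architecture is reasonable --- choose a sequence in $\Lambda^{\vec v}(b)$ along which the weak limit $S(1_B)$ has the same entropy integral as $\mathbb{E}(1_B\mid\mathcal{K}^{\vec v}_\mu)$, then invoke Lemma \ref{lem-6} --- but there is a genuine gap exactly at the point you yourself flag as the main obstacle, and the workaround you sketch does not close it. Two problems. First, the identification $S(1_B)=\mathbb{E}(1_B\mid\mathcal{K}^{\vec v}_\mu)$, and in particular your condition (ii) that $\langle S(1_B),g\rangle=\langle 1_B,g\rangle$ for all $\mathcal{K}^{\vec v}_\mu$-measurable $g$, is false in general: already for an irrational rotation (where $\mathcal{K}^{\vec v}_\mu=\mathcal{B}_X$) the weak limit along a subsequence is a strong limit $1_{B'}$ with $B'$ a translate of $B$, not $1_B$. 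This part is repairable, because $\int_X -S(1_B)\log S(1_B)\,d\mu$ depends only on the distribution of $S(1_B)$, which is preserved under strong limits of the precompact orbit of $\mathbb{E}(1_B\mid\mathcal{K}^{\vec v}_\mu)$; but you would have to argue via distributions rather than via the characterization of conditional expectation. Second, and more seriously, the essential step --- that the subsequence can be chosen inside the strip so that $U_T^{(m_i,n_i)}h\to 0$ weakly for $h=1_B-\mathbb{E}(1_B\mid\mathcal{K}^{\vec v}_\mu)\in\lk(\mathcal{A}_c^{\vec v}(b)\re)^{\perp}$ --- is precisely the directional Koopman--von Neumann statement, and ``weak limits of orbit elements land in $\mathcal{A}_c^{\vec v}(b)$'' is an assertion, not an argument. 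The diagonal construction \eqref{S1}--\eqref{S2} only guarantees that the limits exist, not that they vanish on the orthogonal complement; and since $\Lambda^{\vec v}(b)$ is not a subgroup of $\mathbb{Z}^2$, Theorem \ref{knt} and the usual averaging arguments do not apply directly. At this point your proposal restates the problem rather than solving it.

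The paper supplies the missing mechanism with Park's suspension: it forms the $\mathbb{Z}$-MPS $(\widetilde X,\widetilde{\mathcal B},\widetilde\mu,W)$ on $X\times[0,1)^2$, proves $\mathcal{K}_{\widetilde\mu}=\mathcal{K}^{\vec v}_\mu\times\mathcal{C}$ (its Step 1, which itself requires Claim \ref{c-1} and a Lebesgue differentiation argument), applies the genuine Koopman--von Neumann theorem to the $\mathbb{Z}$-action $W$ to obtain a density-one set of times, and then intersects with the syndetic set of $n$ for which $\{n\beta\}$ is small, so that the lattice points $(n_\gamma,[n_\gamma\beta])$ lie in $\Lambda^{\vec v}(b)$ and the transfer error estimated in \eqref{f} tends to zero. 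It then runs the concavity estimate directly on $\liminf_i H_\mu(T^{-(m''_i,n''_i)}\alpha\mid\eta)$ rather than through Lemma \ref{lem-6}. Some substitute for this suspension-plus-syndetic-selection step is unavoidable in your outline; without it the claim that the complement part dies weakly along a sequence in the strip is unsupported.
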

\begin{proof}
To prove our result, we need a $\mathbb{Z}$-MPS, which was introduced by Park \cite{Pa}. Let $$\widetilde{X}=X\times [0,1)^2, \quad\widetilde{\mu}=\mu\times m \text{\quad and\quad} \widetilde{\mathcal{B}}=\mathcal{B}_X\times \mathcal{C},$$ where $\mathcal{C}$ is the Borel  $\sigma$-algebra on $[0,1)^2$ and $m$ is the Lebesgue measure on $[0,1)^2$. Let $$\phi_{s,t}(x,u,v)=(T^{([s+u],[t+v])}x,s+u-[s+u],t+v-[t+v]),$$ where $[a]$ is the integer part of $a$. Write $\phi_{n,n\beta}$ as $W^n$ for any $n\in \mathbb{N}$. Then we get the $\mathbb{Z}$-MPS $(\widetilde{X},\widetilde{\mathcal{B}},\widetilde{\mu},W).$ Let $\mathcal{K_{\widetilde{\mu}}}$ be the Kronecker algebra of $(\widetilde{X},\widetilde{\mathcal{B}},\widetilde{\mu},W).$ We divide the proof into four steps.
 \begin{step} $\mathcal{K_{\widetilde{\mu}}}=\mathcal{K}^{\vec{v}}_{\mu}\times \mathcal{C}$.
\end{step} 	
\begin{proof}[Proof of Step 1.]It is enough to show that $ \mathcal{K_{\widetilde{\mu}}}\subset \mathcal{K}^{\vec{v}}_{\mu}\times \mathcal{C}$ since the opposite side is obviously true. We fix $B\in  \mathcal{K_{\widetilde{\mu}}}$ and put $$B^{(s,t)}=\{x\in X:(x,s,t)\in B\}$$ for $(s,t)\in [0,1)^2$. We are going to show that  $B\in \mathcal{K}^{\vec{v}}_{\mu}\times \mathcal{C}$. By Lemma \ref{lem-5}, it is enough to show that $B^{(s,t)}\in \mathcal{K}^{\vec{v}}_{\mu}$ for $m$-a.e. $(s,t)\in[0,1)^2$. By the definition of $ \mathcal{K}^{\vec{v}}_{\mu}$, it is sufficient to show that $$\overline{\{{U_T^{(m,n)}1_{B^{(s,t)}}:|\beta m- n|<1,(m,n)\in \mathbb{Z}^2\}}}$$ is a compact subset of $L^2(X,\mathcal{B}_X,\mu)$ for $m$-a.e. $(s,t)\in[0,1)^2$.  To do this,
we let
$$f_u(x,s,t)=2^{2u}\int_{\eta^u_{k,l}}1_B(x,\xi,\zeta)d\xi d\zeta,\text{ if $(s,t)\in \eta^u_{k,l}$},$$
where $u\in \mb{N}$ and $\eta^u_{k,l}= [k2^{-u},(k+1)2^{-u})\times [l2^{-u},(l+1)2^{-u})$, $k,l\in \{0,\ldots, 2^u-1\}$.

We have the following claim.
\begin{claim}\label{c-1}The following statements are true.
	\begin{itemize}
		\item[(1)]For each $u\in\mathbb{N}$ and $(s,t)\in[0,1)^2$, $$\overline{\{{U_{T}^{(m,n)}f_u(x,s,t):|\beta m- n|<1,(m,n)\in \mathbb{Z}^2\}}}$$ is a compact subset of $L^2(X,\mathcal{B}_X,\mu)$.
		\item[(2)]  For $m$-a.e. $(s,t)\in[0,1)^2$,
		$$\lim_{u\to\infty}\|f_u(\cdot,s,t)-1_{B^{(s,t)}}\|_{2}=0.$$
	\end{itemize}
\end{claim}
Since the proof of Claim \ref{c-1} is somewhat long and complicated, we move it to Appendix A. Now let us assume that Claim \ref{c-1} holds. Then we deduce that $$\overline{\lk\{{U_{T}^{(m,n)}1_{B^{(s,t)}}:|\beta m- n|<1,(m,n)\in \mathbb{Z}^2}\re\}}$$ is a compact subset of $L^2(X,\mathcal{B}_X,\mu)$ for $m$-a.e. $(s,t)\in[0,1)^2$, since $1_{B^{(s,t)}}\in L^2(X,\mathcal{B}_X,\mu)$ for $m$-a.e. $(s,t)\in[0,1)^2$. By the arguments at begining, we finish the proof of Step 1.
\end{proof}
\begin{step}
For a given measurable partition $\mathfrak{C}=\{D_1,D_2,\ldots, D_r\}$, there exsits an infinite subset $S'=\{(m'_i,n'_i)\}_{i=1}^{\infty}$ of $ \Lambda^{\vec{v}}(b)$ depending on $\mathfrak{C}$ such that $\{m'_i\}_{i=1}^{\infty}$ is strictly monotone and
\begin{align}\label{f-2}
\lim\limits_{i\to\infty}\langle U_T^{(m'_i,n'_i)}(1_{D_j}-\mathbb{E}(1_{D_j}|\mathcal{K}^{\vec{v}}_\mu)),1_B\rangle=0
\end{align}
for any $B\in\mathcal{B}_X$ and $j\in\{1,2,\ldots,r\}.$
\end{step}
\begin{proof}[Proof of Step 2.]

By Theorem \ref{knt},  there exists a sequence $\widetilde{S} \subset \mathbb{Z}_+$ with $d(\widetilde{S})=1$ such that
\begin{align}\label{eq-1}
&\lim\limits_{n\to \infty,n\in \widetilde{S}}\langle U_{W}^n (1_{D_j}\times 1_{[0,1)^2}-\mathbb{E}(1_{D_j}\times 1_{[0,1)^2}|\mathcal{K}_{\widetilde{\mu}})),1_B\times1_{[0,1)^2}\rangle=0
\end{align}
for any $B\in \mathcal{B}_X$ and $j\in \{1,2,\ldots,r\}$, where $1_{D_j}\times 1_{[0,1)^2}=1_{D_j}(x)\cdot1_{[0,1)^2}(s,t)$ is defined on $\widetilde{X}$. We omit the independent variable for convenience.
 By Step 1 and Proposition \ref{prop1}, we have
\begin{align*}
&\mathbb{E}(1_{D_j}\times 1_{[0,1)^2}|\mathcal{K}_{\widetilde{\mu}})=\mathbb{E}(1_{D_j}\times 1_{[0,1)^2}|\mathcal{K}^{\vec{v}}_{\mu}\times \mathcal{C})\\
=&\mb{E}(1_{D_j}|\mathcal{K}^{\vec{v}}_\mu)\times \mb{E}(1_{[0,1)^2}|\mc{C})=\mb{E}(1_{D_j}|\mathcal{K}^{\vec{v}}_\mu)\times 1_{[0,1)^2}.
\end{align*}
Let $f_j=1_{D_j}-\mathbb{E}(1_{D_j}|\mathcal{K}^{\vec{v}}_\mu)$, $j\in \{1,2,\ldots,r\}.$ Then we conclude
$$f_j\times 1_{[0,1)^2}=1_{D_j}\times 1_{[0,1)^2}-\mathbb{E}(1_{D_j}\times 1_{[0,1)^2}|\mathcal{K}_{\widetilde{\mu}})\text{, }j\in \{1,2,\ldots,r\}. $$
By Fubini's theorem,
\begin{equation}\label{f}
\begin{split}
&|\langle U_{W}^n (f_j\times 1_{[0,1)^2}),1_B\times1_{[0,1)^2}\rangle-\langle U_T^{(n,[n\beta])}(f_j\times 1_{[0,1)^2}),1_B\times1_{[0,1)^2}\rangle|\\
=&|\int_{[0,1)^2}\int_{B}\lk(f_j(T^{([n+s],[n\beta+t])}x)-f_j(T^{(n,[n\beta])}x)\re)d\mu(x) dt ds|\\
=&|\int_{[0,1)}\int_{B}\lk(f_j(T^{(n,[n\beta+t])}x)-f_j(T^{(n,[n\beta])}x)\re)d\mu(x) dt |\\
=&|\int_{B}\int_{1-\{n\beta\}}^{1}\lk(f_j(T^{(n,[n\beta+t])}x)-f_j(T^{(n,[n\beta])}x)\re) dtd\mu(x) |\\
\leq&\{n\beta\}\int_{B}|f_j(T^{(n,[n\beta]+1)}x)|+|f_j(T^{(n,[n\beta])}x)|d\mu(x)\\
\leq&2\|f_j\|_{1}\{n\beta\},
\end{split}
\end{equation}
where $\{n\beta\}$ is the decimal part of $n\beta$.
Let $M=2\max_{1\leq j\leq r}\{\|f_j\|_{1}\}$ and $\Theta_\gamma=\{n\in \mathbb{Z}_+:\{n\beta\}\leq \frac{\epsilon}{2^{\gamma}M}\}$ for all  $\gamma\in\mathbb{N}$. Since $\Theta_\gamma$ is a syndetic set (a subset $S$ of $\mathbb{Z}_+$ is syndetic if it has bounded gaps, i.e. there is a number $N\in \mathbb{N}$ such that $\{i,i+1,\dots,i+N\}\cap S\neq \emptyset$ for every $i\in \mathbb{Z}_+$), it follows that $d(\Theta_\gamma\cap \widetilde{S})>0$ for all $\gamma\in\mathbb{N}$. Take $n_\gamma\in \Theta_\gamma\cap \widetilde{S}$ for each $\gamma\in\mathbb{N}$. By \eqref{f} we obtain that
\begin{equation}\label{f1}
\lim\limits_{\gamma\to\infty}|\langle U_{W}^{n_\gamma} (f_j\times 1_{[0,1)^2}),1_B\times1_{[0,1)^2}\rangle-\langle U_T^{(n_\gamma,[n_\gamma\beta])}\lk(f_j\times 1_{[0,1)^2}\re),1_B\times1_{[0,1)^2}\rangle|=0
\end{equation}
for all $j\in \{1,2,\ldots, r\}.$
By the construction of $\Theta_\gamma$, there exists $\gamma_0\in \mathbb{N}$ such that for any $\gamma >\gamma_0$ we have  $(n_\gamma,[n_\gamma\beta])\in \Lambda^{\vec{v}}(b)$.
It follows that there exists an infinite subsequence $\{n_\gamma\}_{\gamma=\gamma_0+1}^{\infty} $ of $ \widetilde{S}$ such that $(n_\gamma,[n_\gamma\beta])\in \Lambda^{\vec{v}}(b)$ and
\begin{align*}
&\lim\limits_{\gamma\to \infty}\langle U_T^{(n_\gamma,[n_\gamma\beta])} (1_{D_j}-\mathbb{E}(1_{D_j}|\mathcal{K}^{\vec{v}}_\mu)),1_B\rangle\\
=&\lim\limits_{\gamma\to \infty}|\langle U_T^{(n_\gamma,[n_\gamma\beta])}\lk(f_j\times 1_{[0,1)^2}\re),1_B\times 1_{[0,1)^2}\rangle|\overset{\eqref{eq-1}\eqref{f1}}=0
\end{align*}
for all $j\in\{1,2,\ldots,r\}.$ Let $$S'=\{(m'_i,n'_i):(m'_i,n'_i)=(n_{\gamma_0+i},[n_{\gamma_0+i}\beta]),i\in\mathbb{N}\},$$ which is the infinite subset we want. 
\end{proof}

 \begin{step}
Let $\eta$ be a  finite measurable partition of $X$. Then there exists an infinite subset $S''=\{(m''_i,n''_i)\}_{i=1}^{\infty} $ of $ \Lambda^{\vec{v}}(b)$
such that $\{m''_i\}_{i=1}^{\infty}$ is strictly monotone and
 $$\liminf_{i\to\infty}H_\mu(T^{-(m''_i,n''_i)}\alpha|\eta)\geq H_\mu(\alpha|\mathcal{K}^{\vec{v}}_\mu).$$
 \end{step}
 \begin{proof}[Proof of Step 3.]
Let $\alpha=\{A_1,A_2,\ldots,A_k\}$ and $\eta=\{B_1,B_2,\ldots,B_l\}$. By Step 2, there exists an infinite subset
$S''=\{(m''_i,n''_i)\}_{i=1}^{\infty} $ of $ \Lambda^{\vec{v}}(b)$
such that $\{m''_i\}_{i=1}^{\infty}$ is strictly monotone and
\begin{align}\label{3}\lim\limits_{i\to\infty}\langle U_T^{(m''_i,n''_i)}(1_{A_p}-\mathbb{E}(1_{A_p}|\mathcal{K}^{\vec{v}}_\mu)),1_{B_q}\rangle=0
\end{align}
for all $p\in \{1,2,\ldots,k\}$ and $q\in\{1,2,\ldots,l\}$. Hence
\begin{align*}
&\liminf\limits_{i\to \infty}{H_\mu(T^{-(m''_i,n''_i)}\alpha|\eta)}\\
=&\liminf\limits_{i\to \infty}{\sum_{p,q}-\mu\lk(T^{-(m''_i,n''_i)}A_p\cap B_q\re)}\log{\left(\frac{\mu(T^{-(m''_i,n''_i)}A_p\cap B_q)}{\mu(B_q)}\right)}\\
=&\liminf\limits_{i\to \infty}{\sum_{p,q}-\langle U_T^{(m''_i,n''_i)}1_{A_p},1_{B_q}\rangle\log{\left(\frac{\langle U_T^{\left(m''_i,n''_i\right)}1_{A_p},1_{B_q}\rangle}{\mu\left(B_q\right)}\right)}}\\
\overset{\eqref{3}}=&\liminf\limits_{i\to \infty}{\sum_{p,q}-\langle U_T^{\left(m''_i,n''_i\right)}\mathbb{E}\left(1_{A_p}|\mathcal{K}^{\vec{v}}_{\mu}\right),1_{B_q}\rangle\log{\left(\frac{\langle U_T^{\left(m''_i,n''_i\right)}\mathbb{E}\left(1_{A_p}|\mathcal{K}^{\vec{v}}_{\mu}\right),1_{B_q}\rangle}{\mu\left(B_q\right)}\right)}}.
\end{align*}
Let $a_{pq}^i=-\langle U_T^{\left(m''_i,n''_i\right)}\mathbb{E}\left(1_{A_p}|\mathcal{K}^{\vec{v}}_{\mu}\right),1_{B_q}\rangle \log{\left(\frac{\langle U_T^{\left(m''_i,n''_i\right)}\mathbb{E}\left(1_{A_p}|\mathcal{K}^{\vec{v}}_{\mu}\right),1_{B_q}\rangle}{\mu\left(B_q\right)}\right)}$.
From concavity of $-x\log{x}$, we deduce that
\begin{align*}
\frac{a_{pq}^i}{\mu\left(B_q\right)}
&=-\left(\int_{B_q}{\frac{U_{T}^{\left(m''_i,n''_i\right)}\mathbb{E}\left(1_{A_p}|\mathcal{K}^{\vec{v}}_{\mu}\right)}{\mu\left(B_q\right)}}\,d\mu\right)\log{\left(\int_{B_q}{\frac{U_{T}^{\left(m''_i,n''_i\right)}\mathbb{E}\left(1_{A_p}|\mathcal{K}^{\vec{v}}_{\mu}\right)}{\mu\left(B_q\right)}}\,d\mu\right)}\\
&\geq -\int_{B_q}{\frac{U_{T}^{\left(m''_i,n''_i\right)}\mathbb{E}\left(1_{A_p}|\mathcal{K}^{\vec{v}}_{\mu}\right)}{\mu\left(B_q\right)}}\log{\left(U_{T}^{\left(m''_i,n''_i\right)}\mathbb{E}\left(1_{A_p}|\mathcal{K}^{\vec{v}}_{\mu}\right)\right)}\,d\mu.
\end{align*}
That is, $$a_{pq}^i\geq -\int_{B_q}{U_T^{\left(m''_i,n''_i\right)}\mathbb{E}\left(1_{A_p}|\mathcal{K}^{\vec{v}}_{\mu}\right)}\log{\left(U_T^{\left(m''_i,n''_i\right)}\mathbb{E}\left(1_{A_p}|\mathcal{K}^{\vec{v}}_{\mu}\right)\right)}\,d\mu.$$
We conclude that
\begin{align*}
\sum_{p,q}a_{pq}^i
&\geq \sum_{p,q}-\int_{B_q}{U_T^{\left(m''_i,n''_i\right)}\mathbb{E}\left(1_{A_p}|\mathcal{K}^{\vec{v}}_{\mu}\right)}\log{\left(U_T^{\left(m''_i,n''_i\right)}\mathbb{E}\left(1_{A_p}|\mathcal{K}^{\vec{v}}_{\mu}\right)\right)}\,d\mu\\
&=\sum_p -\int_X{U_T^{\left(m''_i,n''_i\right)}\mathbb{E}\left(1_{A_p}|\mathcal{K}^{\vec{v}}_{\mu}\right)}\log{\left(U_T^{\left(m''_i,n''_i\right)}\mathbb{E}\left(1_{A_p}|\mathcal{K}^{\vec{v}}_{\mu}\right)\right)}\,d\mu\\
&=\sum_p -\int_X{\mathbb{E}\left(1_{A_p}|\mathcal{K}^{\vec{v}}_{\mu}\right)}\log{\left(\mathbb{E}\left(1_{A_p}|\mathcal{K}^{\vec{v}}_{\mu}\right)\right)}\,d\mu.
\end{align*}
Therefore
$$\liminf\limits_{i\to \infty}{H_\mu\left(T^{-\left(m''_i,n''_i\right)}\alpha|\eta\right)} \geq H_\mu\left(\alpha|\mathcal{K}^{\vec{v}}_\mu\right).$$
 \end{proof}
 \begin{step}
	In this step, we finish the proof of Lemma \ref{lem-10}.
\end{step}

We can define inductively an infinite subset $S=\{(m_i,n_i)\}_{i=1}^{\infty}$  of $\Lambda^{\vec{v}}(b)$ by using Step 3 repeatedly such that $\{m_i\}_{i=1}^{\infty}$ is strictly monotone and for any $i\in \mathbb{N}$,
$$H_\mu(T^{-(m_i,n_i)}\alpha|\bigvee_{j=1}^{i-1} T^{-(m_j,n_j)}\alpha)\geq H_\mu(\alpha|\mathcal{K}^{\vec{v}}_\mu)-\frac{1}{2^i}.$$
Therefore, one has
\begin{align*}
h^{S}_{\mu}(T, \alpha)
&=\limsup_{k\to \infty}{\frac{1}{k}\sum_{i=1}^{k}H_\mu(T^{-(m_i,n_i)}\alpha|\bigvee_{j=1}^{i-1} T^{-(m_j,n_j)}\alpha)}\\
&\geq\limsup_{k\to \infty}{\frac{1}{k}\sum_{i=1}^{k}(H_\mu(\alpha|\mathcal{K}^{\vec{v}}_\mu)-\frac{1}{2^i})}\\
&=H_\mu(\alpha|\mathcal{K}^{\vec{v}}_\mu).
\end{align*}
This finishes the proof of Theorem \ref{lem-10}.
\end{proof}

\begin{rem}
By Lemma \ref{lem-4} and Lemma \ref{lem-10}, we complete the proof of Theorem \ref{thm-3}.
\end{rem}

By Lemma \ref{lem-4} and Lemma \ref{lem-10}, one of our main results, the directional version of Ku\v shnirenko theorem, is proved as follows.
\begin{proof}[Proof of Theorem \ref{thm2}] Let $(X,\mathcal{B}_X,\mu,T)$ be a $\mathbb{Z}^2$-MPS and $\vec{v}=(1,\beta)\in\mathbb{R}^2$ be a direction vector.
	
(a) $\Rightarrow$ (b). Assmue that $\mu$ has $\vec{v}$-discrete spectrum system. Then $\mathcal{K}^{\vec{v}}_\mu=\mc{B}_X$. Given $b\in(0,\infty)$. By Lemma \ref{lem-4},
  $$h^{S}_{\mu}(T,\alpha)\leq H_\mu(\alpha|\mathcal{K}^{\vec{v}}_\mu)=0$$
 for any infinite subset $S=\{(m_i,n_i)\}_{i=1}^{\infty}$ of $\Lambda^{\vec{v}}(b)$, which implies $(X,\mathcal{B}_X,\mu,T)$ is $\vec{v}$-null.
 
(b) $\Rightarrow$ (a). Assume that $(X,\mathcal{B}_X,\mu,T)$ is $\vec{v}$-null. Then for any $b\in (0,\infty)$ and infinite subset $S=\{(m_i,n_i)\}_{i=1}^{\infty}$ of $\Lambda^{\vec{v}}(b)$, $$h^{S}_{\mu}(T)=0.$$ Hence
 $$h^{S}_{\mu}(T,\{B,B^c\})=0$$
 for any $B\in \mathcal{B}_X$, which implies $B\in \mathcal{K}^{\vec{v}}_\mu$ by Corollary \ref{prop-111}. Therefore $\mathcal{K}^{\vec{v}}_\mu=\mc{B}_X$, that is, $\mu$ has $\vec{v}$-discrete spectrum.
 This completes the proof of Theorem \ref{thm2}.

\end{proof}

\section{Sequence entropy and direcitonal sequence entropy}
In this seciton, we recall and prove Theorem \ref{thm2} and Theorem \ref{thm3}, which establish the relation between sequence entropy (respectively discrete spectrum) and directional sequence entropy (respectively directional discrete spectrum). For these purposes, we recall a result in \cite[Lemma 5.1]{KL}, which is restated as follows.
\begin{lem}\label{11}
Let $(X,\mathcal{B}_X,\mu,T)$ be a $\mathbb{Z}^2$-MPS. Then the following conditions are equivalent.
\begin{itemize}
\item[(a)]$\overline{\lk\{U_T^{(m,n)}1_B:(m,n)\in \mathbb{Z}^2\re\}}$ is a compact subset of $L^2(X,\mathcal{B}_X,\mu)$.
\item[(b)]For any infinite subset $S=\{(m_i,n_i)\}_{i=1}^{\infty}$ of $\mathbb{Z}^2$, $$h^{S}_{\mu}(T,\{B,B^c\})=0.$$
\end{itemize}
\end{lem}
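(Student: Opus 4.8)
The plan is to follow the proof of Theorem~\ref{thm-11} almost verbatim, replacing the directional index set $P\subset\Lambda^{\vec{v}}(b)$ by the whole group $\mathbb{Z}^2$. Indeed, no step of that argument—nor of the auxiliary Lemma~\ref{lem-6} and the weak-limit construction in \eqref{S1}--\eqref{S2}—uses the geometry of the cone $\Lambda^{\vec{v}}(b)$; all that is needed is that the relevant indices form an infinite subset of $\mathbb{Z}^2$, together with separability of $\mathcal{H}=L^2(X,\mathcal{B}_X,\mu)$ and weak compactness of its unit ball. Throughout I write $\eta=\{B,B^c\}$.

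For (a)$\Rightarrow$(b) I would fix an infinite subset $S=\{(m_i,n_i)\}_{i=1}^\infty$ of $\mathbb{Z}^2$. Since $\overline{\lk\{U_T^{(m_i,n_i)}1_B:i\in\mathbb{N}\re\}}$ is compact, it is totally bounded, so for each $\delta>0$ some initial segment $\{U_T^{(m_1,n_1)}1_B,\dots,U_T^{(m_s,n_s)}1_B\}$ is already a $\delta$-net; hence every $U_T^{(m_i,n_i)}1_B$ is within $\delta$ of some $U_T^{(m_{j_i},n_{j_i})}1_B$ with $j_i\le s$, that is $\mu\lk(T^{-(m_i,n_i)}B\,\Delta\,T^{-(m_{j_i},n_{j_i})}B\re)<\delta$. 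Given $\epsilon>0$, choosing $\delta=\delta(\epsilon,2)$ from Lemma~\ref{lem-2} gives $H_\mu\lk(T^{-(m_i,n_i)}\eta|T^{-(m_{j_i},n_{j_i})}\eta\re)<\epsilon$, and since $j_i\le s<i$ for $i>s$, monotonicity of conditional entropy yields $H_\mu\lk(T^{-(m_i,n_i)}\eta|\bigvee_{l=1}^{i-1}T^{-(m_l,n_l)}\eta\re)<\epsilon$. Averaging over $i$ and letting $\epsilon\to0$ gives $h^S_\mu(T,\eta)=0$.

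For (b)$\Rightarrow$(a) I would argue by contraposition, as in (c)$\Rightarrow$(a) of Theorem~\ref{thm-11}. If the orbit closure is not compact, there are $\epsilon>0$ and an infinite $F\subset\mathbb{Z}^2$ that is $\epsilon$-separated, i.e. $\|U_T^{(m,n)}1_B-U_T^{(s,t)}1_B\|_2\ge\epsilon$ for distinct indices of $F$. Applying the construction \eqref{S1}--\eqref{S2} to $F$ produces an infinite $S_F\subset F$ with $\langle g,U_T^{(m_i,n_i)}f\rangle\to\langle g,S_F(f)\rangle$ for all $f,g\in\mathcal{H}$, and Lemma~\ref{lem-6} then yields an infinite $S_F'\subset S_F$ with
\[
h^{S_F'}_\mu(T,\eta)\ \ge\ \int_X\lk(-S_F(1_B)\log S_F(1_B)-S_F(1_{B^c})\log S_F(1_{B^c})\re)\,d\mu.
\]
If (b) holds this integral is $0$, forcing $S_F(1_B)$ to be $\{0,1\}$-valued; since $\langle 1_X,S_F(1_B)\rangle=\lim_i\langle 1_X,U_T^{(m_i,n_i)}1_B\rangle=\langle 1_X,1_B\rangle=\mu(B)$, we get $\|S_F(1_B)\|_2=\|1_B\|_2$, so $U_T^{(m_i,n_i)}1_B\to S_F(1_B)$ weakly and in norm, hence strongly. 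This contradicts the $\epsilon$-separation of $F$.

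The only point needing attention is that Lemma~\ref{lem-6} and \eqref{S1}--\eqref{S2} were stated for subsets of $\Lambda^{\vec{v}}(b)$, so strictly one must check they apply to arbitrary infinite subsets of $\mathbb{Z}^2$; but their proofs invoke only separability and weak compactness, never $\Lambda^{\vec{v}}(b)$, so this transfer is immediate. I expect no genuine obstacle beyond this bookkeeping, which is exactly why the statement can be quoted directly from \cite[Lemma~5.1]{KL}.
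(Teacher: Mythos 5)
Your proposal is correct. The paper itself gives no proof of this lemma (it is quoted from \cite[Lemma 5.1]{KL}), but your argument is exactly the paper's own proof of the directional analogue, Theorem \ref{thm-11}, transported from $P\subset\Lambda^{\vec{v}}(b)$ to $P=\mathbb{Z}^2$; and your observation that neither the net argument via Lemma \ref{lem-2} nor the weak-limit construction \eqref{S1}--\eqref{S2} and Lemma \ref{lem-6} uses the cone structure is accurate (the only role of $\Lambda^{\vec{v}}(b)$ there is to guarantee the strictly monotone first coordinates in condition (c) of Theorem \ref{thm-11}, which your statement does not require).
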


Next we prove a combinatorial result as follows.

\begin{lem}\label{c}
Let  $\vec{v}=(1,\beta_1), \vec{w}=(1,\beta_2)\in \mb{R}^2$ be two directions with $\beta_1\neq \beta_2$. Then $$\mb{Z}^2=\Lambda^{\vec{v}}(b)+\Lambda^{\vec{w}}(b)$$ for any $b> 4([|\beta_1-\beta_2|]+1)$, where $$\Lambda^{\vec{v}}(b)+\Lambda^{\vec{w}}(b)=\{(m_1+m_2,n_1+n_2):(m_1,n_1)\in \Lambda^{\vec{v}}(b)\text{ and } (m_2,n_2)\in \Lambda^{\vec{w}}(b)\}.$$
\begin{proof}
Given $(m,n)\in\mb{Z}^2$. By division algorithm,  we can choose $m_1\in\mb{Z}$ such that 
\begin{equation}\label{c1}
|n-\beta_2m+(\beta_1-\beta_2)m_1|\leq |\beta_1-\beta_2|.
\end{equation}
Since $b/4>[|\beta_1-\beta_2|]+1$, we can choose $n_1\in \mb{Z}$ such that 
\begin{itemize}
	\item[(a)]$\beta_1m_1-b/2\leq n_1\leq \beta_1m_1+b/2$,
	\item[(b)]$n_1=\beta_1m_1+r$  with  $|r|\leq b/4$.
\end{itemize} 

Let $n_2=n-n_1$ and $m_2=m-m_1$. Then it is sufficient to prove that $$(m_2,n_2)\in \Lambda^{\vec{w}}(b).$$
Note that 
\begin{equation*}
\begin{split}
|n_2-\beta_2m_2|=&|n-\beta_2m+n_1-\beta_2m_1|=|n-\beta_2m+\beta_1m_1+r-\beta_2m_1|\\
=&|n-\beta_2m+(\beta_1-\beta_1)m_1+r|\overset{\eqref{c1}}\leq|\beta_1-\beta_2|+|r|\\
\leq& ([|\beta_1-\beta_2|]+1)+b/4\leq b/2,
\end{split}
\end{equation*}
which implies $(m_2,n_2)\in \Lambda^{\vec{w}}(b)$.
This finishes the proof of Lemma \ref{c}.
\end{proof}
\end{lem}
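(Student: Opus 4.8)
The plan is to show that an arbitrary lattice point $(m,n)\in\mathbb{Z}^2$ splits as a sum of one point in each strip, by first fixing the first coordinate $m_1$ of the $\Lambda^{\vec{v}}(b)$-summand and then rounding its second coordinate to a nearby integer. Recall that $(m_1,n_1)\in\Lambda^{\vec{v}}(b)$ precisely when $|n_1-\beta_1 m_1|\le b/2$, and similarly membership in $\Lambda^{\vec{w}}(b)$ means $|n_2-\beta_2 m_2|\le b/2$; so the whole argument amounts to keeping both signed distances to the corresponding lines within $b/2$.

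First I would exploit $\beta_1\neq\beta_2$ to choose the integer $m_1$. As $m_1$ runs over $\mathbb{Z}$, the numbers $(\beta_1-\beta_2)m_1$ form an arithmetic progression with gap $|\beta_1-\beta_2|>0$, so I can pick $m_1$ with $|(n-\beta_2 m)-(\beta_1-\beta_2)m_1|\le|\beta_1-\beta_2|$; this is the division-algorithm step. With $m_1$ fixed, I would then select $n_1\in\mathbb{Z}$ lying in the interval $[\beta_1 m_1-b/4,\ \beta_1 m_1+b/4]$. Such an integer exists because the interval has length $b/2>2([|\beta_1-\beta_2|]+1)\ge 2>1$, and writing $n_1=\beta_1 m_1+r$ yields $|r|\le b/4$. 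In particular $|n_1-\beta_1 m_1|=|r|\le b/4\le b/2$, so $(m_1,n_1)\in\Lambda^{\vec{v}}(b)$ immediately.

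Next I would set $m_2=m-m_1$ and $n_2=n-n_1$, so that $(m,n)=(m_1,n_1)+(m_2,n_2)$ by construction, and verify $(m_2,n_2)\in\Lambda^{\vec{w}}(b)$ by a direct computation. Substituting $n_1=\beta_1 m_1+r$ into $n_2-\beta_2 m_2$ telescopes it to $(n-\beta_2 m)-(\beta_1-\beta_2)m_1-r$, whose two error terms are controlled by the two preceding choices: the first is at most $|\beta_1-\beta_2|$ from the division step, and the second is at most $b/4$ from the rounding step. Hence $|n_2-\beta_2 m_2|\le|\beta_1-\beta_2|+b/4$.

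The only genuine content, and the step I expect to be the crux, is the final inequality: I must confirm that these two independently incurred errors together still fit inside $b/2$. This is exactly what the hypothesis $b>4([|\beta_1-\beta_2|]+1)$ buys, since it forces $|\beta_1-\beta_2|\le[|\beta_1-\beta_2|]+1<b/4$, whence $|\beta_1-\beta_2|+b/4<b/2$ and $(m_2,n_2)\in\Lambda^{\vec{w}}(b)$. Therefore $(m,n)\in\Lambda^{\vec{v}}(b)+\Lambda^{\vec{w}}(b)$, and as $(m,n)$ was arbitrary the asserted set equality follows. The delicate point throughout is purely the bookkeeping of constants; bounding $|r|$ by $b/4$ rather than by the full $b/2$ is precisely what leaves room to absorb the division-step error.
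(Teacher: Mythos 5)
Your proposal is correct and follows essentially the same route as the paper: pick $m_1$ by the division algorithm so the residue $(n-\beta_2 m)-(\beta_1-\beta_2)m_1$ is at most $|\beta_1-\beta_2|$ in absolute value, round to an integer $n_1$ within $b/4$ of $\beta_1 m_1$, and absorb the two errors into the width $b/2$ of the second strip. Your sign bookkeeping in the telescoping step is in fact internally consistent, whereas the paper's displayed computation carries a harmless sign typo; otherwise the two arguments are identical.
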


With the help of the above lemma, we are able to prove Theorem \ref{thm2} and Theorem \ref{thm3}, which are recalled and proved as follows.
\begin{thm}\label{111}
	Let $(X,\mathcal{B}_X,\mu,T)$ be a $\mathbb{Z}^2$-MPS. Given a finite measurable partition $\alpha$ of $X$. If there exists an infinite subset $S$ of $\mathbb{Z}^2$ such that $h^S_{\mu}(T,\alpha)>0$, then there is at most one direction $\vec{v}=(1,\beta)\in \mb{R}^2$ such that $$h_\mu^{\vec{v},*}(\alpha)=0,$$ where $h_\mu^{\vec{v},*}(\alpha)$ is defined by \eqref{4}.
	\begin{proof}
		Given a finite measurable partition $\alpha$ of $X$. Assume that there exists an infinite subset $S$ of $\mathbb{Z}^2$ such that $$h^S_{\mu}(T,\alpha)>0.$$ 
		
		In the following we show that there is at most one direction $\vec{\tau}=(1,\beta)\in \mb{R}^2$ such that $h_\mu^{\vec{\tau},*}(\alpha)=0.$ If this is not true then there exist two directions $\vec{v}=(1,\beta_1), \vec{w}=(1,\beta_2)\in \mb{R}^2$ with $\beta_1\neq \beta_2$ such that $$h_\mu^{\vec{v},*}(\alpha)=0\quad\text{and}\quad h_\mu^{\vec{w},*}(\alpha)=0.$$ By  Lemma \ref{lem-10} and \eqref{4}, we obtian that $$H_\mu(\alpha|\mathcal{K}_\mu^{\vec{v}})=H_\mu(\alpha|\mathcal{K}_\mu^{\vec{w}})=0,$$ which implies that  $\alpha\subset \mathcal{K}_\mu^{\vec{v}} \bigcap  \mathcal{K}_\mu^{\vec{w}}$. Given $B\in\alpha$. In the following we show that $$\overline{\{U_T^{(m,n)}1_B:(m,n)\in \mathbb{Z}^2\}}$$
		is a compact subset of $L^2(X,\mc{B}_X,\mu)$, which implies that $h^{S'}_{\mu}(T,\{B,B^c\})=0$ for any infinite subset $S'=\{(m'_i,n'_i)\}_{i=1}^{\infty}$ of $\mathbb{Z}^2$ by Lemma \ref{11}. In fact, by Lemma \ref{c}, taking $b=([|\beta_1-\beta_2|]+2)$, we have $$\Lambda^{\vec{v}}(b)+\Lambda^{\vec{w}}(b)=\mb{Z}^2$$ and hence it suffices to prove that $$\mathcal{R}_b:=\overline{\{U_T^{(m,n)}1_B:(m,n)\in \Lambda^{\vec{v}}(b)+\Lambda^{\vec{w}}(b)\}}$$ is a compact subset of $L^2(X,\mc{B}_X,\mu)$. Note that $B\in \mathcal{K}_\mu^{\vec{v}} \bigcap  \mathcal{K}_\mu^{\vec{w}}$. So $$\mathcal{P}_b:=\overline{\{U_T^{(m,n)}1_B:(m,n)\in \Lambda^{\vec{v}}(b)\}} \quad\text{and}\quad \mathcal{Q}_b:=\overline{\{U_T^{(m,n)}1_B:(m,n)\in \Lambda^{\vec{w}}(b)\}}$$  are  compact subsets of $L^2(X,\mathcal{B}_X,\mu)$. 	For any $\epsilon>0$, let $$\{(m_i,n_i)\}_{i=1}^s\subset \Lambda^{\vec{v}}(b)\quad\text{and}\quad \{(u_j,v_j)\}_{j=1}^s\subset \Lambda^{\vec{w}}(b)$$ be $\epsilon/2$-nets of $\mathcal{P}_b$ and $\mathcal{Q}_b$ in $L^2(X,\mathcal{B}_X,\mu)$, respectively. Hence for any $(p_1,q_1)\in \Lambda^{\vec{v}}(b)$ and $(p_2,q_2)\in \Lambda^{\vec{w}}(b)$, we conclude
		$$\|U_T^{(p_1,q_1)}1_{B}-U_T^{(m_i,n_i)}1_{B}\|_2<\epsilon/2 \quad\text{and}\quad \|U_T^{(p_2,q_2)}1_{B}-U_T^{(u_j,v_j)}1_{B}\|_2<\epsilon/2$$
		for some $i,j\in\{1,\ldots,s\}$. Moreover, we deduce
		\begin{equation}\label{8}
		\begin{split}
		&\|U_T^{(p_1+p_2,q_1+q_2)}1_{B}-U_T^{(m_i+u_j,n_i+v_j)}1_{B}\|_2\\
	    \leq& \|U_T^{(p_1,q_1)}1_{T^{-(p_2,q_2)}B}-U_T^{(m_i,n_i)}1_{T^{-(p_2,q_2)}B}\|_2\\
	    &+\|U_T^{(p_2,q_2)}1_{T^{-(m_i,n_i)}B}-U_T^{(u_j,v_j)}1_{T^{-(m_i,n_i)}B}\|_2\leq \epsilon.
		\end{split}
		\end{equation}
		It follows from \eqref{8} that $$\Theta_b:=\{(m_i+u_j,n_i+v_j):1\leq i,j\leq s\}$$ is a finite $\epsilon$-net of $\mathcal{R}_b$ in $L^2(X,\mathcal{B}_X,\mu)$, which implies that $\mathcal{R}_b$ is a compact subset of $L^2(X,\mathcal{B}_X,\mu)$. By the arguments at begining of this proof, we obtain $$h^{S'}_{\mu}(T,\{B,B^c\})=0$$ for any infinite subset $S'=\{(m'_i,n'_i)\}_{i=1}^{\infty}$ of $\mathbb{Z}^2$. 
		
		Since $\bigvee_{B\in \alpha}\{B,B^c\}$ is finer than $\alpha$, it follows  that $$h^{S'}_{\mu}(T,\alpha)=0$$ for any infinite subset $S'$ of $\mathbb{Z}^2$, which contradicts the fact that there exists an infinite subset $S$ of $\mathbb{Z}^2$ such that $h^S_{\mu}(T,\alpha)>0$. This completes the proof of Theorem \ref{111}.
	\end{proof}
\end{thm}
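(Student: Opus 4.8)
The plan is to argue by contradiction. Suppose the conclusion fails, so there are two distinct directions $\vec{v}=(1,\beta_1)$ and $\vec{w}=(1,\beta_2)$ with $\beta_1\neq\beta_2$ for which $h_\mu^{\vec{v},*}(\alpha)=h_\mu^{\vec{w},*}(\alpha)=0$. By Remark \ref{r1}, which identifies $h_\mu^{\vec{v},*}(\alpha)$ with $H_\mu(\alpha|\mathcal{K}_\mu^{\vec{v}})$, this forces $H_\mu(\alpha|\mathcal{K}_\mu^{\vec{v}})=H_\mu(\alpha|\mathcal{K}_\mu^{\vec{w}})=0$, so every atom $B\in\alpha$ is measurable with respect to both directional Kronecker algebras, that is $\alpha\subset\mathcal{K}_\mu^{\vec{v}}\cap\mathcal{K}_\mu^{\vec{w}}$. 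The goal is to upgrade this two-directional almost periodicity of each indicator $1_B$ into genuine $\mathbb{Z}^2$-almost periodicity, since by Lemma \ref{11} that would make $h^{S'}_\mu(T,\{B,B^c\})$ vanish along every infinite $S'\subset\mathbb{Z}^2$ and ultimately contradict $h^S_\mu(T,\alpha)>0$.

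The decisive combinatorial input is Lemma \ref{c}: fixing $b>4([|\beta_1-\beta_2|]+1)$ gives the covering identity $\mathbb{Z}^2=\Lambda^{\vec{v}}(b)+\Lambda^{\vec{w}}(b)$. Hence, to prove that $\overline{\{U_T^{(m,n)}1_B:(m,n)\in\mathbb{Z}^2\}}$ is compact in $L^2(X,\mathcal{B}_X,\mu)$, it suffices to establish compactness of the orbit indexed by the Minkowski sum. Because $B\in\mathcal{K}_\mu^{\vec{v}}\cap\mathcal{K}_\mu^{\vec{w}}$, the two cone-orbits $\mathcal{P}_b=\overline{\{U_T^{(m,n)}1_B:(m,n)\in\Lambda^{\vec{v}}(b)\}}$ and $\mathcal{Q}_b=\overline{\{U_T^{(m,n)}1_B:(m,n)\in\Lambda^{\vec{w}}(b)\}}$ are each compact, so by the $\epsilon$-net characterization of compactness each admits a finite $\epsilon/2$-net indexed by points of the respective cone.

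The technical heart — and the step I expect to be the main obstacle — is to glue these two nets into a single finite $\epsilon$-net for the sum-set orbit. Given $\epsilon>0$, I would fix $\epsilon/2$-nets $\{(m_i,n_i)\}_{i=1}^s\subset\Lambda^{\vec{v}}(b)$ for $\mathcal{P}_b$ and $\{(u_j,v_j)\}_{j=1}^s\subset\Lambda^{\vec{w}}(b)$ for $\mathcal{Q}_b$, and then for an arbitrary index $(p_1+p_2,q_1+q_2)$ with $(p_1,q_1)\in\Lambda^{\vec{v}}(b)$ and $(p_2,q_2)\in\Lambda^{\vec{w}}(b)$, interpolate through the point $(m_i+u_j,n_i+v_j)$. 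The key device is that each $U_T^{(m,n)}$ is unitary, hence an $L^2$-isometry, combined with the relation $U_T^{(p_1+p_2,q_1+q_2)}1_B=U_T^{(p_1,q_1)}1_{T^{-(p_2,q_2)}B}$. Splitting the difference into a $\vec{v}$-part and a $\vec{w}$-part and applying the isometry to transport each translate of $1_B$ onto the appropriate net point, both pieces are bounded by $\epsilon/2$, giving total error at most $\epsilon$. Thus $\{(m_i+u_j,n_i+v_j):1\leq i,j\leq s\}$ is a finite $\epsilon$-net and the sum-set orbit is compact.

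With $\overline{\{U_T^{(m,n)}1_B:(m,n)\in\mathbb{Z}^2\}}$ compact for each $B\in\alpha$, Lemma \ref{11} yields $h^{S'}_\mu(T,\{B,B^c\})=0$ for every infinite $S'\subset\mathbb{Z}^2$. Since $\bigvee_{B\in\alpha}\{B,B^c\}$ refines $\alpha$ and sequence entropy is monotone under refinement, I conclude $h^{S'}_\mu(T,\alpha)=0$ for every infinite $S'\subset\mathbb{Z}^2$, contradicting the assumed existence of $S$ with $h^S_\mu(T,\alpha)>0$. The whole argument reduces to the interplay between the covering Lemma \ref{c} and the isometry-based net-gluing; once these are in place the contradiction is immediate.
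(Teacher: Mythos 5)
Your proposal is correct and takes essentially the same route as the paper's own proof: contradiction from two directions, identifying $h_\mu^{\vec{v},*}(\alpha)$ with $H_\mu(\alpha|\mathcal{K}_\mu^{\vec{v}})$ to get $\alpha\subset\mathcal{K}_\mu^{\vec{v}}\cap\mathcal{K}_\mu^{\vec{w}}$, the covering identity $\mathbb{Z}^2=\Lambda^{\vec{v}}(b)+\Lambda^{\vec{w}}(b)$ from Lemma \ref{c}, gluing finite $\epsilon/2$-nets of $\mathcal{P}_b$ and $\mathcal{Q}_b$ into an $\epsilon$-net via unitarity, and then concluding through Lemma \ref{11} and refinement. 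As a minor point in your favor, your choice $b>4([|\beta_1-\beta_2|]+1)$ matches the hypothesis of Lemma \ref{c} exactly, whereas the paper's choice $b=[|\beta_1-\beta_2|]+2$ does not literally satisfy it.
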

The following example shows that  they could both hold at the same time that there is an infinite subset $S$ of $\mathbb{Z}^2$ such that $h^S_{\mu}(T,\alpha)>0$ and there is a direction vector $\vec{v}=(1,\beta)\in \mathbb{R}^2$ such that $h_\mu^{\vec{v},*}(\alpha)=0$.

\begin{exam}
Let $(Y,2^Y,\mu)$ denote the measure space, where $Y=\{0,1\}$, $2^Y$ is the collection consisting of all subsets of $Y$ and the points $0,1$ have measure $1/2$. Let $$(X,\mathcal{B}_X,m)=\prod_{-\infty}^{\infty}(Y,2^Y,\mu).$$ Define $T_1=Id_X:X\to X$ by $$T_1(\{x_n\})=\{x_n\}$$ for any $\{x_n\}\in X$.
Define $T_2:X\to X$ by $$T_2(\{x_n\})=\{y_n\}$$ where $y_n=x_{n+1}$ for all $n\in\mb{N}$, that is, $T_2$ is the two-sided $(\frac{1}{2},\frac{1}{2})$-shift.  Define $\mathbb{Z}^2$-actions $T$ by $$T^{(m,n)}=T_1^mT_2^n.$$
Then we obtain a $\mathbb{Z}^2$-MPS $(X,\mc{B}_X,m,T).$

Let $$\alpha=\{_0[j]_0:j=0,1\}$$ be a finite measurable partition of $X$, where $_0[j]_0=\{x=\{x_n\}:x_0=j\}.$ On the one hand, take $S=\{(0,n)\}_{n=1}^{\infty}$. It is known that (see \cite[Page 102, Theorem 4.26]{Peter}) 
\begin{equation}
\begin{split}
h_{\mu}^S(T,\alpha)=\limsup_{n\to \infty}\frac{1}{n}H_{\mu}\lk(\bigvee_{i=1}^{n}T^{-(0,i)}\alpha\re)=\log{2}>0.
\end{split}
\end{equation}

On the other hand, let $\vec{v}=(1,0)$. Since $T_1=Id_X$, it is easy to see that $$h_\mu^{\vec{v},*}(\alpha)=0.$$
\end{exam}
It is known that for a $\mathbb{Z}^q$-MPS $(X,\mathcal{B}_X,\mu,T)$, $\mu$ has discrete spectrum if and only if $\mathcal{B}_X=\mathcal{K}_\mu$, where $\mathcal{K}_\mu$ is the Kronecker algebra of $(X,\mathcal{B}_X,\mu,T)$ (see \cite{G}).
Following the proof of Theorem \ref{111}, we can immediately obtain the relation between classical discrete spectrum and directional discrete spectrum.  
\begin{thm}	\label{thm11}
			Let $(X,\mathcal{B}_X,\mu,T)$ be a $\mathbb{Z}^2$-MPS. Then the following statements are equivalent.
			\begin{itemize}
				\item[(a)] There exist two directions $\vec{v}=(1,\beta_1),\vec{w}=(1,\beta_2)\in \mb{R}^2$ with $\beta_1\neq \beta_2$ such that $\mu$ has $\vec{v}$-discrete spectrum and $\vec{w}$-discrete spectrum.
				\item[(b)]$\mu$ has discrete spectrum.
			\end{itemize}
	\begin{proof}
	(a) $\Rightarrow$ (b). If there exist two direction $\vec{v}=(1,\beta_1)$ and $\vec{w}=(1,\beta_2)$ with $\beta_1\notin \beta_2$ such that $\mu$ has $\vec{v}$-discrete spectrum and $\vec{w}$-discrete spectrum, that is,
	$$\mathcal{K}_\mu^{\vec{v}}=\mc{B}_X=\mathcal{K}_\mu^{\vec{w}}.$$ Given $b\in(0,\infty)$ large enough. For any $B\in\mc{B}_X$, $$\mathcal{P}_b:=\overline{\{U_T^{(m,n)}1_B:(m,n)\in \Lambda^{\vec{v}}(b)\}} \text{ and } \mathcal{Q}_b:=\overline{\{U_T^{(m,n)}1_B:(m,n)\in \Lambda^{\vec{w}}(b)\}}$$  are  compact subsets of $L^2(X,\mathcal{B}_X,\mu)$. By the proof of Theorem \ref{111}, we deduce $$\overline{\{U_T^{(m,n)}1_B:(m,n)\in \mathbb{Z}^2\}}$$ is a compact subset of $L^2(X,\mathcal{B}_X,\mu)$, which implies $\mathcal{K}_\mu=\mc{B}_X$.
	That is $\mu$ has discrete spectrum.
	
	(b) $\Rightarrow$ (a). If $\mu$ has discrete spectrum, then $\mathcal{K}_\mu=\mc{B}_X$, that is, $$\overline{\{U_T^{(m,n)}1_B:(m,n)\in \mathbb{Z}^2\}}$$ is a compact subset of $L^2(X,\mathcal{B}_X,\mu)$ for any $B\in\mc{B}_X$. Since for any $b\in(0,\infty)$, $\mathcal{P}_b$ and $\mathcal{Q}_b$ are closed subsets of $\overline{\{U_T^{(m,n)}1_B:(m,n)\in \mathbb{Z}^2\}}$, it follows that $\mathcal{P}_b$ and $\mathcal{Q}_b$ are compact subsets of $L^2(X,\mathcal{B}_X,\mu)$, which implies that 
	$$\mathcal{K}_\mu^{\vec{v}}=\mc{B}_X=\mathcal{K}_\mu^{\vec{w}}.$$
	Therefore $\mu$ has $\vec{v}$-discrete spectrum and $\vec{w}$-discrete spectrum.
	\end{proof}
\begin{rem}
	By the proof of Theorem \ref{thm11}, it is clear that if $\mu$ has discrete spectrum then $\mu$ has $\vec{v}$-discrete spectrum for any direction $\vec{v}=(1,\beta)\in \mb{R}^2$.
\end{rem}
\end{thm}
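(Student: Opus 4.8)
The plan is to translate both implications into statements about compactness of orbit closures in $L^2(X,\mathcal{B}_X,\mu)$, using the two dictionaries already established: $\mu$ has discrete spectrum exactly when $\mathcal{K}_\mu=\mathcal{B}_X$, i.e. when $\overline{\{U_T^{(m,n)}1_B:(m,n)\in\mathbb{Z}^2\}}$ is compact for every $B\in\mathcal{B}_X$, while $\mu$ has $\vec{v}$-discrete spectrum exactly when the directional orbit closure $\mathcal{P}_b=\overline{\{U_T^{(m,n)}1_B:(m,n)\in\Lambda^{\vec{v}}(b)\}}$ is compact for every $B$ (and similarly for $\vec{w}$ with $\mathcal{Q}_b$). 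With these reformulations the theorem becomes a purely geometric and functional-analytic comparison between the full lattice orbit and its two directional sub-orbits.

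The implication (b) $\Rightarrow$ (a) I expect to be immediate. Fix any $b\in(0,\infty)$ and any $B\in\mathcal{B}_X$. Since $\Lambda^{\vec{v}}(b)$ and $\Lambda^{\vec{w}}(b)$ are subsets of $\mathbb{Z}^2$, both $\mathcal{P}_b$ and $\mathcal{Q}_b$ are closed subsets of the compact set $\overline{\{U_T^{(m,n)}1_B:(m,n)\in\mathbb{Z}^2\}}$, hence themselves compact. As $B$ is arbitrary this gives $\mathcal{K}_\mu^{\vec{v}}=\mathcal{B}_X=\mathcal{K}_\mu^{\vec{w}}$, so $\mu$ has both directional discrete spectra.

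For (a) $\Rightarrow$ (b) the engine is Lemma \ref{c}: choosing $b>4([|\beta_1-\beta_2|]+1)$ yields $\Lambda^{\vec{v}}(b)+\Lambda^{\vec{w}}(b)=\mathbb{Z}^2$, so every lattice vector factors as a sum of one vector from each directional cone. It therefore suffices to exhibit, for each $B\in\mathcal{B}_X$ and each $\epsilon>0$, a finite $\epsilon$-net of $\overline{\{U_T^{(m,n)}1_B:(m,n)\in\Lambda^{\vec{v}}(b)+\Lambda^{\vec{w}}(b)\}}$. Since $B$ lies in both $\mathcal{K}_\mu^{\vec{v}}$ and $\mathcal{K}_\mu^{\vec{w}}$, the compact sets $\mathcal{P}_b$ and $\mathcal{Q}_b$ admit finite $\epsilon/2$-nets indexed by lattice points $\{(m_i,n_i)\}_{i=1}^s\subset\Lambda^{\vec{v}}(b)$ and $\{(u_j,v_j)\}_{j=1}^s\subset\Lambda^{\vec{w}}(b)$. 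I would then glue these two nets by forming the Minkowski sum $\{(m_i+u_j,n_i+v_j)\}$ of their index sets and, for a decomposition $(m,n)=(p_1+p_2,q_1+q_2)$, estimate the distance $\|U_T^{(p_1+p_2,q_1+q_2)}1_B-U_T^{(m_i+u_j,n_i+v_j)}1_B\|_2$ by inserting an intermediate term and invoking that each $U_T^{(s,t)}$ is an $L^2$-isometry. This produces a finite $\epsilon$-net of the full orbit, hence its compactness, hence $B\in\mathcal{K}_\mu$; as $B$ is arbitrary, $\mathcal{K}_\mu=\mathcal{B}_X$.

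The main obstacle is precisely this gluing estimate. The point is that the $\epsilon/2$-approximation along the second direction must be applied not to $1_B$ itself but to the pushforward indicator $1_{T^{-(p_2,q_2)}B}$ (respectively $1_{T^{-(m_i,n_i)}B}$), and one needs the two independent approximations to compose additively without the errors interacting. The isometry property of the Koopman operators is exactly what guarantees that translating the base point by a power of $T$ leaves all $L^2$-distances unchanged, so the two approximation errors simply add. This is the computation already carried out in the proof of Theorem \ref{111}, so rather than reproduce it I would cite that estimate directly and record that $\{(m_i+u_j,n_i+v_j):1\leq i,j\leq s\}$ is the desired finite $\epsilon$-net.
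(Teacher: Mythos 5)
Your proposal is correct and follows essentially the same route as the paper: the direction (b) $\Rightarrow$ (a) by noting that $\mathcal{P}_b$ and $\mathcal{Q}_b$ are closed subsets of the compact full orbit closure, and (a) $\Rightarrow$ (b) via Lemma \ref{c} together with the Minkowski-sum $\epsilon$-net gluing estimate from the proof of Theorem \ref{111}, which the paper likewise invokes by reference rather than reproving. No gaps.
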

\section*{Acknowledgement}
The authors would like to thank Wen Huang and Xiangdong Ye for making many valuable suggestions. C. Liu was partially supported by NNSF of China (12090012, 12031019, 11731003), L. Xu was partially supported by NNSF of China (11801538, 11871188, 12031019) and the USTC Research Funds of the Double First-Class Initiative.

\begin{appendix}
	\section{Proof of Claim 1}
In this section, we complete the proof of Claim 1.	
	
It follows from Lebesgue differentiation theorem (see \cite[Page 106, Corollary 1.5]{st}) that we obtain the following result.
\begin{lem} \label{lem-41}
	For any $E\in \mathcal{C}$ with $m(E)>0$, we deduce
	\begin{align}\label{A}
	\lim\limits_{u\to \infty}\frac{m(\eta^u(s,t)\bigcap E)}{m(\eta^u(s,t))}=1
	\end{align}
	for $m$-a.e. $(s,t)\in E$, 	where $\eta^u(s,t)$ is the cube $\eta^u_{k,l}$ containing $(s,t)$.
\end{lem}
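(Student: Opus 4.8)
The plan is to deduce the statement directly from the Lebesgue differentiation theorem applied to the indicator function $1_E\in L^1([0,1)^2,\mc{C},m)$. First I would recall the form of the theorem I need: for $f\in L^1$ one has
\[
\lim_{k\to\infty}\frac{1}{m(U_k)}\int_{U_k}f\,dm=f(x)
\]
for $m$-a.e.\ $x$, whenever $\{U_k\}$ is a sequence of measurable sets shrinking \emph{regularly} (nicely) to $x$; that is, there exist radii $r_k\to 0$ and a constant $c>0$, independent of $k$, with $U_k\subset B(x,r_k)$ and $m(U_k)\ge c\,m(B(x,r_k))$. This is the version recorded in \cite[Page 106, Corollary 1.5]{st}.

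The key step is to check that the dyadic cubes $\eta^u(s,t)$ shrink regularly to the point $(s,t)$ they contain. Since $\eta^u(s,t)$ has side length $2^{-u}$ and $(s,t)\in\eta^u(s,t)$, every point of the cube lies within distance $\sqrt{2}\,2^{-u}$ of $(s,t)$, so $\eta^u(s,t)\subset B\lk((s,t),\sqrt{2}\,2^{-u}\re)$ with $r_u=\sqrt{2}\,2^{-u}\to 0$. Comparing measures in $\R^2$, $m(\eta^u(s,t))=2^{-2u}$ while $m\lk(B((s,t),\sqrt{2}\,2^{-u})\re)=2\pi\,2^{-2u}$, so the regularity condition holds with the uniform constant $c=1/(2\pi)$, regardless of where $(s,t)$ sits inside the cube.

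With regularity established, applying the differentiation theorem to $f=1_E$ yields, for $m$-a.e.\ $(s,t)$,
\[
\lim_{u\to\infty}\frac{1}{m(\eta^u(s,t))}\int_{\eta^u(s,t)}1_E\,dm=1_E(s,t).
\]
Since $\int_{\eta^u(s,t)}1_E\,dm=m(\eta^u(s,t)\cap E)$ and $1_E(s,t)=1$ whenever $(s,t)\in E$, this is exactly \eqref{A} for $m$-a.e.\ $(s,t)\in E$. The only point demanding care—and the mild obstacle—is that $(s,t)$ is in general \emph{not} the center of $\eta^u(s,t)$, so one cannot invoke a statement phrased for concentric balls; the measure comparison above is precisely what bridges this gap and licenses the nicely-shrinking-sets form of the theorem.
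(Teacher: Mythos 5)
Your proof is correct and follows essentially the same route as the paper, which likewise deduces the lemma directly from the Lebesgue differentiation theorem as stated in \cite[Page 106, Corollary 1.5]{st}. The one added value in your write-up is that you explicitly verify the regular (nicely) shrinking condition for the non-concentric dyadic cubes $\eta^u(s,t)$, with the uniform constant $c=1/(2\pi)$ coming from $m(\eta^u(s,t))=2^{-2u}$ versus $m\lk(B((s,t),\sqrt{2}\,2^{-u})\re)=2\pi\,2^{-2u}$ --- a detail the paper leaves implicit but which is exactly what licenses the application of that corollary.
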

By Lemma \ref{lem-41}, we can prove the following consequence.
\begin{lem} \label{lem-42}
	For $\mu$-a.e. $x\in X$, $$\lim\limits_{u\to\infty}f_u(x,s,t)=1_B(x,s,t)$$ for $m$-a.e. $(s,t)\in [0,1)^2$.
\end{lem}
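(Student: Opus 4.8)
The plan is to read $f_u(x,\cdot,\cdot)$ as the dyadic averaging operator applied to the $x$-section of $1_B$, and then to pass to the limit using the dyadic density statement already isolated as Lemma~\ref{lem-41}. First I would settle measurability of the sections. Since $B\in\widetilde{\mathcal{B}}=\mathcal{B}_X\times\mathcal{C}$, Lemma~\ref{lem-5} guarantees that the $x$-section $B_x:=\{(s,t)\in[0,1)^2:(x,s,t)\in B\}$ lies in $\mathcal{C}$ for $\mu$-a.e. $x\in X$. Fix such an $x$. Because $m(\eta^u_{k,l})=2^{-2u}$, for $(s,t)\in\eta^u(s,t):=\eta^u_{k,l}$ we have
$$f_u(x,s,t)=\frac{1}{m(\eta^u(s,t))}\int_{\eta^u(s,t)}1_{B_x}(\xi,\zeta)\,d\xi\,d\zeta=\frac{m(\eta^u(s,t)\cap B_x)}{m(\eta^u(s,t))},$$
while $1_B(x,s,t)=1_{B_x}(s,t)$. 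Thus for the fixed $x$ the claim reduces to the dyadic differentiation of the indicator $1_{B_x}$.

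Next I would split according to the density of $B_x$ and of its complement. If $m(B_x)>0$, applying Lemma~\ref{lem-41} with $E=B_x$ gives $f_u(x,s,t)\to 1$ for $m$-a.e. $(s,t)\in B_x$. If $m(B_x^c)>0$, applying Lemma~\ref{lem-41} with $E=B_x^c=[0,1)^2\setminus B_x$ yields $\tfrac{m(\eta^u(s,t)\cap B_x^c)}{m(\eta^u(s,t))}\to 1$ for $m$-a.e. $(s,t)\in B_x^c$; since $f_u(x,s,t)=1-\tfrac{m(\eta^u(s,t)\cap B_x^c)}{m(\eta^u(s,t))}$, this gives $f_u(x,s,t)\to 0$ for $m$-a.e. $(s,t)\in B_x^c$. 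The two degenerate cases ($m(B_x)=0$ or $m(B_x^c)=0$) are subsumed by the same two applications, because whichever of $B_x$, $B_x^c$ has full measure already carries the desired convergence on all of $[0,1)^2$. Combining the two regimes gives $f_u(x,s,t)\to 1_{B_x}(s,t)=1_B(x,s,t)$ for $m$-a.e. $(s,t)\in[0,1)^2$, for the fixed $x$.

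Finally I would observe that this conclusion holds for every $x$ in the $\mu$-conull set on which $B_x\in\mathcal{C}$, which is precisely the assertion of Lemma~\ref{lem-42}.

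I expect no serious obstacle here. The only points requiring care are the passage from the product-measurable set $B$ to the almost-everywhere $\mathcal{C}$-measurability of its $x$-sections (handled by Lemma~\ref{lem-5}) and the bookkeeping of the null cases where $B_x$ or $B_x^c$ is negligible; the analytic core is entirely carried by the dyadic Lebesgue density statement of Lemma~\ref{lem-41}.
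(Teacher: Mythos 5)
Your argument is correct, and it reaches the conclusion by a genuinely more direct route than the paper. The paper proves a stronger statement: for \emph{every} $g\in L^{\infty}([0,1)^2,\mathcal{C},m)$ the dyadic averages $g_u$ converge to $g$ almost everywhere. It does this by first checking the continuous case, then invoking Lusin's theorem to produce an increasing sequence of closed sets $E_q$ with $m(E_q)>1-1/q$ on which $g$ is continuous, and finally splitting the average over $\eta^u(s,t)$ into the part over $E_q$ (controlled by continuity) and the part over $E_q^c$ (controlled by Lemma~\ref{lem-41} applied to $E_q$, together with the bound $\|g\|_\infty$). You instead exploit the fact that the only function to be differentiated here is the indicator of the section $B_x$, so that $f_u(x,s,t)$ \emph{is} the density ratio $m(\eta^u(s,t)\cap B_x)/m(\eta^u(s,t))$; two applications of Lemma~\ref{lem-41}, to $B_x$ and to $B_x^c$, then give the convergence on $B_x$ and on $B_x^c$ respectively, with the degenerate null cases correctly absorbed. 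Your use of Lemma~\ref{lem-5} to secure $B_x\in\mathcal{C}$ for $\mu$-a.e.\ $x$ is the right preliminary step (the paper leaves this implicit). What your approach buys is the elimination of Lusin's theorem and of the continuous-function case entirely; what the paper's approach buys is the more general $L^\infty$ statement, which it does not actually need for Claim~\ref{c-1}. Both arguments rest on the same analytic core, the dyadic density Lemma~\ref{lem-41}, so there is no gap in yours.
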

\begin{proof} It is clear that $f_u(x,s,t)$ is measurable on $[0,1)^2$ for $\mu$-a.e. $x\in X$. For a given $g(s,t)\in L^{\infty}\lk([0,1)^2,\mathcal{C},m\re)$, we let
	$$g_u(s,t)=2^{2u}\int_{\eta^u_{k,l}}g(\xi,\zeta)d\xi d\zeta,\text{ if $(s,t)\in \eta^u_{k,l}$},$$
	where $u\in \mb{N}$ and $\eta^u_{k,l}= [k2^{-u},(k+1)2^{-u})\times [l2^{-u},(l+1)2^{-u})$ for all $k,l\in \{0,\ldots, 2^u-1\}$. To prove this result, we prove a stronger result, that is, for any $g(s,t)\in L^{\infty}([0,1)^2,\mathcal{C},m)$, we have  $\lim\limits_{u\to \infty} g_u(s,t)=g(s,t)$ for $m$-a.e. $(s,t)\in [0,1)^2$.
	
	First, we assume that $g(s,t)\in C\lk([0,1)^2\re)$, where $C\lk([0,1)^2\re)$ represents the set of all continuous functions defined on $[0,1)^2$. For any  $\epsilon>0$, there exists $\delta>0$ such that for any $(s,t),(s',t')\in [0,1)^2$ with $|(s,t)-(s',t')|<\delta$,  $$|g(s,t)-g(s',t')|<\epsilon.$$ So for any $u$ large enough, we obtain that
	\begin{align*}
	|g_u(s,t)-g(s,t)|&=|2^{2u}\int_{\eta^u(s,t)}g(\xi,\zeta)-g(s,t)d\xi d\zeta|\\
	&\leq 2^{2u}\int_{\eta^u(s,t)}|g(\xi,\zeta)-g(s,t)|d\xi d\zeta \leq \epsilon,
	\end{align*}
	where $I_u(s,t)$ is the one that contains $(s,t)$ in  $\{\eta^u_{k,l}:k,l\in \{0,\ldots, 2^u-1\}\}$. It follows that $\lim\limits_{u\to \infty}g_u(s,t)= g(s,t)$ pointwise.

	Now we prove the general case. Fix a $g(s,t)\in L^{\infty}([0,1)^2,\mathcal{C},m)$. For any $q\in \mathbb{N}$, taking $\epsilon_q=\frac{1}{q}$, by Lusin theorem, there exists a measurable closed set $E_q\subset [0,1)^2$ such that $E_q\subset E_{q+1}$, $m(E_q)>1-\epsilon_q$ and $g|_{E_q}$ is continuous. Then $m(\bigcup_{q=1}^{\infty}E_q)=1$.
    For each $q\in \mathbb{N}$, we conclude
	\begin{align*}
	\lim_{u\to\infty}|g_u(s,t)-g(s,t)|&\leq \lim_{u\to\infty}\bigg(2^{2u}\int_{\eta^u(s,t)\bigcap E_q}|g(\xi,\zeta)-g(s,t)|d\xi d\zeta\\
	&+2^{2u}\int_{\eta^u(s,t)\bigcap E_q^c}|g(\xi,\zeta)-g(s,t)|d\xi d\zeta\bigg)\\
	&\leq 0+\lim_{u\to\infty}2^{2u}\lk(m\lk(E_q^c \bigcap \eta^u(s,t)\re)\re)M\overset{\eqref{A}}=0,
	\end{align*}
	for $m$-a.e. $(s,t)\in E_q$, where $M=\max_{(s,t)\in [0,1)^2}\{|g(s,t)|\}$.
	It follows from $m(\bigcup_{q=1}^{\infty}E_q)=1$  that $g_u(s,t)$ converges to $g(s,t)$ for $m$-a.e. $(s,t)\in [0,1)^2$ as $u\to \infty$. 
\end{proof}

Now we are able to prove Claim \ref{c-1}.
\begin{proof}[Proof of Claim \ref{c-1}.]
	Before proving it, we introduce a notation, that is, $U_{\phi_{m,n}} h=h\circ\phi_{m,n}$ for all $h\in L^2(\widetilde{X},\widetilde{\mathcal{B}},\widetilde{\mu})$.
	Since $B\in  \mathcal{K_{\widetilde{\mu}}}$, it follows that
	$$\overline{\{W^n1_B:n\in\mathbb{Z}_+\}}\text{ is compact in }L^2(\widetilde{X},\widetilde{\mathcal{B}},\widetilde{\mu}),$$
	which implies that
	\begin{equation}\label{f-1}
	\overline{\{U_{\phi_{m,n}}1_B:|\beta m- n|<1,(m,n)\in \mathbb{Z}^2\}} \text{ is compact in } L^2(\widetilde{X},\widetilde{\mathcal{B}},\widetilde{\mu}).
	\end{equation}
	It follows from \eqref{f-1} that for any $\epsilon>0$ there exists a finite $\epsilon$-net $\{U_{\phi_{m_i,n_i}}1_B\}_{i=1}^p$ with $(m_i,n_i)\in \mb{Z}^2$ and $|\beta m_i-n_i|<1$. That is, for any $(m,n)\in \mathbb{Z}^2$ with $|\beta m-n|<1$, there exists $i\in \{1,2,\ldots,p\}$ such that $$\lk(\int|U_{\phi_{m_i,n_i}}1_B-U_{\phi_{m,n}}1_B|^2d(\mu\times m)\re)^{\frac{1}{2}}=\|U_{\phi_{m_i,n_i}}1_B-U_{\phi_{m,n}}1_B\|_{2}<\epsilon.$$ 
	For a fixed $u$, we consider   $\{U_{\phi_{m_i,n_i}}f_u\}_{i=1}^p$. By Minkowski's inequality for intergrals (see \cite[Page 194]{RA}), we conclude
	\begin{align*}	
	\|U_{\phi_{m_i,n_i}}f_u-U_{\phi_{m,n}}f_u&\|_{2}=\lk(\int_{X}\int_{[0,1)^2}|U_{\phi_{m_i,n_i}}f_u-U_{\phi_{m,n}}f_u|^2dm d\mu\re)^{\frac{1}{2}}\\
	&=\lk(\int_{X}\sum_{k,l}\int_{\eta^u_{k,l}}|U_{\phi_{m_i,n_i}}f_u-U_{\phi_{m,n}}f_u|^2dmd\mu\re)^{\frac{1}{2}}\\
	&=\lk(\int_{X}\sum_{k,l}\int_{\eta^u_{k,l}}|U_{T}^{(m_i,n_i)}\int_{\eta^u_{k,l}}1_Bdm-U_{T}^{(m,n)}\int_{\eta^u_{k,l}}1_Bdm|^2dmd\mu\re)^{\frac{1}{2}}\\
	&=\lk(\int_{X}\sum_{k,l}m(\eta^u_{k,l})|\int_{\eta^u_{k,l}}(U_{\phi_{m_i,n_i}}1_B-U_{\phi_{m,n}}1_B)dm|^2d\mu\re)^{\frac{1}{2}}\\
	&\le\lk(\int_{X}\sum_{k,l}m(\eta^u_{k,l})\int_{\eta^u_{k,l}}|U_{\phi_{m_i,n_i}}1_B-U_{\phi_{m,n}}1_B|^2dmd\mu\re)^{\frac{1}{2}}\\
	&=\frac{1}{2^u}\lk(\int_{X}\int_{[0,1)^2}|U_{\phi_{m_i,n_i}}1_B-U_{\phi_{m,n}}1_B|^2dmd\mu\re)^{\frac{1}{2}}\\
	&=\frac{1}{2^u}\lk(\int_{X\times [0,1)^2}|U_{\phi_{m_i,n_i}}1_B-U_{\phi_{m,n}}1_B|^2d(\mu\times m)\re)^{\frac{1}{2}}\\
	&<\frac{1}{2^u}\epsilon.
	\end{align*}
	Hence $\{U_{\phi_{m_i,n_i}}f_u\}_{i=1}^p$ is a finite $\epsilon/2^u$-net of $\overline{\{U_{\phi_{m,n}}f_u:|\beta m- n|<1,(m,n)\in \mathbb{Z}^2\}}$, which implies that $$\overline{\{U_{\phi_{m,n}}f_u:|\beta m-n|<1,(m,n)\in \mathbb{Z}^2\}}$$ is compact in  $L^2(\widetilde{X},\widetilde{\mathcal{B}},\widetilde{\mu})$ for all $u\in\mathbb{N}.$ For a fixed $u\in \mathbb{N}$, we take a finite $\epsilon/2^{u}$-net $\{U_{\phi_{m_i,n_i}}f_u\}_{i=1}^p.$ Note that
	$$U_{\phi_{m,n}}f_u(x,s,t)=f_u(T^{(m,n)}x,s,t).$$
	So for any $(m,n)\in \mathbb{Z}^2$ with $|\beta m_i-n_i|<1$, there exists $i\in\{1,2,\ldots,p\}$ such that
	\begin{equation}\label{A1}
	\begin{split}
	\frac{\epsilon}{2^{u}}&>\|U_{\phi_{m,n}}f_u-U_{\phi_{m_i,n_i}}f_u\|_{2}\\
&=\lk(\int_{X}\int_{[0,1)^2}|f_u(T^{(m,n)}x,s,t)-f_u(T^{(m_i,n_i)}x,s,t)|^2dmd\mu\re)^{\frac{1}{2}}.
	\end{split}
	\end{equation}
	For any $(s,t)\in [0,1)^2$, there exist $k,l\in\{0,\ldots, 2^u-1\}$ such that $(s,t)\in \eta_{k,l}^u$. Hence
	\begin{align*}
	&\|U_T^{(m,n)}f_u(\cdot,s,t)-U_T^{(m_i,n_i)}f_u(\cdot,s,t)\|_{2}\\
	=&\lk(\int_{X}|f_u(T^{(m,n)}x,s,t)-f_u(T^{(m_i,n_i)}x,s,t)|^2d\mu\re)^{\frac{1}{2}}\\
	=&\lk(\int_{X}2^{2u}\int_{\eta^u_{k,l}}|f_u(T^{(m,n)}x,s,t)-f_u(T^{(m_i,n_i)}x,s,t)|^2dmd\mu\re)^{\frac{1}{2}}\\
	\leq&2^u\lk(\int_{X}\int_{[0,1)^2}|f_u(T^{(m,n)}x,s,t)-f_u(T^{(m_i,n_i)}x,s,t)|^2dmd\mu\re)^{\frac{1}{2}}\\
	\overset{\eqref{A1}}\leq&2^{u}\cdot\frac{\epsilon}{2^{u}}=\epsilon.
	\end{align*}
	Hence for a fixed $(s,t)\in [0,1)^2$, $\overline{\lk\{U_{T}^{(m,m)}f_u:|\beta m- n|<1,(m,n)\in \mathbb{Z}^2\re\}}$ has  a finite $\epsilon$-net $\{U_{T}^{(m_i,n_i)}f_u(x,s,t)\}_{i=1}^p$. It follows that $$\overline{\{U_{T}^{(m,m)}f_u:|\beta m- n|<1,(m,n)\in \mathbb{Z}^2\}}$$  is compact in  $L^2(X,\mathcal{B}_X,\mu)$ for each $(s,t)\in [0,1)^2$. This ends the proof of the first statement in Claim \ref{c-1}.
	
	By Lemma \ref{lem-41}, we deduce that for $\mu$-a.e. $x\in X$
	$$\lim_{u\to\infty}f_u(x,s,t)=1_B(x,s,t)$$
	for $m$-a.e. $(s,t)\in[0,1)^2.$ By dominated convergence theorem, we conclude that
	$$\lim_{u\to\infty}\|f(\cdot,s,t)-1_{B^{(s,t)}}\|_{2}=0$$
	for $m$-a.e. $(s,t)\in[0,1)^2.$ Now we finish the proof of Claim \ref{c-1}.
\end{proof}
\section{Results for $\mathbb{Z}^q$-MPS}
In this section, we introduce the corresponding results of $\mathbb{Z}^q$-MPS, which are proved by exactly the same methods for the case of $\mathbb{Z}^2$-MPS.
Let $(X ,\mathcal{B}_X, \mu, T)$ be a $\mathbb{Z}^q$-MPS. For a fixed finite measurable partition $\alpha$ of $X$ and an infinite subset $S=\{\vec{w}_i\}_{i=1}^{\infty}$ of $\mathbb{Z}^q$, we put
$$ h^S_{\mu}(T,\alpha)= \limsup_{k\to \infty} \frac{1}{k}H_\mu \left(\bigvee_{i=1}^k T^{-\vec{w}_i} \alpha\right).$$
Then we can define the sequence entroy of $T$ for the infinite subset $S$  by
$$h^S_{\mu}(T)=\sup_{\alpha}h^S_{\mu}(T,\alpha),$$
where the supremum is taken over all finite measurable partitions of $X$. 

Now we define directional sequence entropy for the case of $\mb{Z}^q$-actions. Let $\vec{v}=(1,\beta_2,\ldots,\beta_q)\in\mathbb{R}^q$ be a direction vector and $\textbf{b}=(b_2,\ldots,b_q)\in \mathbb{R}^{q-1}_+:=\{\textbf{u}=(u_1,\ldots,u_{q-1})\in \mathbb{R}^{q-1}:u_i>0\}$.
 We put $$\Lambda^{\vec{v}}(\textbf{b})=\{\vec{w}=(m_1,\ldots,m_q)\in\mathbb{Z}^q:\beta_i m_1-b_i/2\leq m_i\leq \beta_i m_1+b_i/2,\quad i\in\{2,\ldots,q\}\}.$$ 
Given a fixed finite measurable partition $\alpha$ of $X$ and an infinite subset $S=\{\vec{w}_i=(m_1^{(i)},\ldots,m_q^{(i)})\}_{i=1}^{\infty}$ of $\Lambda^{\vec{v}}(\textbf{b})$ that $\{m_1^{(i)}\}_{i=1}^{\infty}$ is strictly monotone, we put
$$ h^S_{\mu}(T,\alpha)= \limsup_{k\to \infty} \frac{1}{k}H_\mu \lk(\bigvee_{i=1}^k T^{-\vec{w}_i} \alpha\re).$$
Then we can define the directional sequence entroy of $T$ for the infinite subset $S$  by
$$h^S_{\mu}(T)=\sup_{\alpha}h^S_{\mu}(T,\alpha),$$
where the supremum is taken over all finite measurable partitions of $X$.    	
 Define the $\vec{v}$-directional Kronecker algebra by
$$\mathcal{K}_\mu^{\vec{v}}(\textbf{b})=\lk\{B\in\mathcal{B}_X: \overline{\{U_T^{\vec{w}}1_B :\vec{w}\in \Lambda^{\vec{v}}(\textbf{b}) \}}\text{ is compact in } L^2(X,\mathcal{B}_X,\mu) \re\},$$
where $U_T^{\vec{w}}:L^2(X,\mathcal{B}_X,\mu)\to L^2(X,\mathcal{B}_X,\mu)$ is the unitary operator such that
$$U_T^{\vec{w}}f=f\circ T^{\vec{w}}\text{ for all }f\in L^2(X,\mathcal{B}_X,\mu).$$
Similarly, we can prove that $\mathcal{K}_\mu^{\vec{v}}(\textbf{b})$ is a $\sigma$-algebra and the definition of $\mathcal{K}_\mu^{\vec{v}}(\textbf{b})$ is independent of the selection of $\textbf{b}$. So we omit $\textbf{b}$ in $\mathcal{K}_\mu^{\vec{v}}(\textbf{b})$ and write it as $\mathcal{K}_\mu^{\vec{v}}$.
\medskip
\begin{thm} \label{thm-B3}
	Let $(X,\mathcal{B}_X,\mu,T)$ be a  $\mathbb{Z}^q$-MPS,  $\vec{v}=(1,\beta_2,\ldots,\beta_q)\in\mathbb{R}^q$ be a direction vector and  $\textbf{b}=(b_2,\ldots,b_q)\in \mathbb{R}^{q-1}_+$. Given a finite measurable partition $\alpha$ of $X$. For any infinite subset $S'$ of $\Lambda^{\vec{v}}(\textbf{b})$, $$h^{S'}_{\mu}(T,\alpha)\leq H_\mu(\alpha|\mathcal{K}_\mu^{\vec{v}}).$$ Moreover, there exists an infinite subset $S=\{\vec{w}_i=(m_1^{(i)},\ldots,m_q^{(i)})\}_{i=1}^{\infty}$ of $\Lambda^{\vec{v}}(\textbf{b})$ such that $\{m_1^{(i)}\}_{i=1}^{\infty}$ is strictly monotone and $$h^{S}_{\mu}(T,\alpha)=H_\mu(\alpha|\mathcal{K}_\mu^{\vec{v}}).$$
\end{thm}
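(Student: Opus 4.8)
The plan is to establish the two inequalities separately, mirroring the split of Theorem~\ref{thm-3} into Lemma~\ref{lem-4} (the upper bound) and Lemma~\ref{lem-10} (the attainment), and to generalize every auxiliary construction from $[0,1)^2$ to $[0,1)^q$.

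For the upper bound $h^{S'}_\mu(T,\alpha)\le H_\mu(\alpha|\mathcal{K}_\mu^{\vec{v}})$, I would first record the $\mathbb{Z}^q$-analogs of Lemma~\ref{lem-6}, Theorem~\ref{thm-11} and Corollary~\ref{prop-111}; their proofs are verbatim the same once $(m,n)\in\Lambda^{\vec{v}}(b)$ is replaced by $\vec{w}\in\Lambda^{\vec{v}}(\mathbf{b})$ and the condition that $\{m_i\}$ is strictly monotone by the condition that $\{m_1^{(i)}\}$ is strictly monotone. Granting these, the argument of Lemma~\ref{lem-4} carries over unchanged: choose finite partitions $\eta_k\subset\mathcal{K}_\mu^{\vec{v}}$ with $H_\mu(\alpha|\eta_k)\to H_\mu(\alpha|\mathcal{K}_\mu^{\vec{v}})$, use that each $\eta_k$ has vanishing directional sequence entropy along every $S'\subset\Lambda^{\vec{v}}(\mathbf{b})$ to insert $\bigvee_{i=1}^l T^{-\vec{w}_i}\eta_k$ at no cost, apply subadditivity of conditional entropy, and let $k\to\infty$.

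The attainment half is the substantive part and follows the four-step scheme of Lemma~\ref{lem-10}. I would form the suspension $\widetilde X=X\times[0,1)^q$, $\widetilde\mu=\mu\times m$, $\widetilde{\mathcal B}=\mathcal B_X\times\mathcal C$ with $m$ the Lebesgue measure on $[0,1)^q$, define
$$\phi_{s_1,\dots,s_q}(x,u_1,\dots,u_q)=\lk(T^{([s_1+u_1],\dots,[s_q+u_q])}x,\{s_1+u_1\},\dots,\{s_q+u_q\}\re),$$
and set $W^n=\phi_{n,n\beta_2,\dots,n\beta_q}$, yielding a $\mathbb{Z}$-MPS $(\widetilde X,\widetilde{\mathcal B},\widetilde\mu,W)$. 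Step~1 is the identity $\mathcal K_{\widetilde\mu}=\mathcal K_\mu^{\vec{v}}\times\mathcal C$; its proof runs as before, reducing to the $q$-dimensional version of Claim~\ref{c-1}, where the dyadic squares $\eta^u_{k,l}$ in $[0,1)^2$ are replaced by dyadic cubes in $[0,1)^q$ and the Lebesgue differentiation input is taken in dimension $q$. In Step~2 I would apply the Koopman--von Neumann theorem (Theorem~\ref{knt}) to $W$ to obtain a density-one set $\widetilde S\subset\mathbb{Z}_+$ along which the inner products $\langle U_W^n(1_{D_j}\times 1_{[0,1)^q}-\mathbb E(\,\cdot\,|\mathcal K_{\widetilde\mu})),1_B\times 1_{[0,1)^q}\rangle$ tend to $0$, then transfer to the $T$-action using Proposition~\ref{prop1} together with a Fubini estimate bounding the discrepancy between $U_W^n$ and $U_T^{(n,[n\beta_2],\dots,[n\beta_q])}$ by $C\sum_{i=2}^q\{n\beta_i\}$, where $\{\cdot\}$ denotes fractional part.

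The main obstacle is the simultaneous Diophantine control needed at this last point. Whereas in $\mathbb{Z}^2$ one only had to force a single fractional part $\{n\beta\}$ small on a density-positive subset of $\widetilde S$, here I must intersect $\widetilde S$ with $\Theta_\gamma=\{n\in\mathbb{Z}_+:\{n\beta_i\}\le \epsilon/(2^\gamma M)\text{ for all }2\le i\le q\}$. The set $\Theta_\gamma$ is precisely the set of return times of $\vec 0$ to a neighborhood of itself under the rotation $n\mapsto(n\beta_2,\dots,n\beta_q)$ on $\mathbb T^{q-1}$; since such a rotation is an isometry, this return-time set is syndetic, so $d(\Theta_\gamma\cap\widetilde S)>0$ and one may select $n_\gamma\in\Theta_\gamma\cap\widetilde S$ with $(n_\gamma,[n_\gamma\beta_2],\dots,[n_\gamma\beta_q])\in\Lambda^{\vec{v}}(\mathbf{b})$. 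With this sequence in hand, Step~3 reproduces the concavity computation of Lemma~\ref{lem-10} to obtain $\liminf_i H_\mu(T^{-\vec{w}_i}\alpha|\eta)\ge H_\mu(\alpha|\mathcal K_\mu^{\vec{v}})$, and Step~4 applies this estimate repeatedly to build inductively the desired $S=\{\vec w_i\}$ with $\{m_1^{(i)}\}$ strictly monotone and $h^S_\mu(T,\alpha)\ge H_\mu(\alpha|\mathcal K_\mu^{\vec{v}})$; combined with the upper bound this yields equality and completes the proof.
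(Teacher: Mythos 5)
Your route coincides with the paper's (the paper proves the $q=2$ case in detail and asserts that Theorem~\ref{thm-B3} follows ``by exactly the same methods''), and the upper bound, Step~1, Step~3 and Step~4 do all generalize as you describe. However, there is a genuine gap at exactly the point you single out as the main obstacle: the simultaneous Diophantine control in Step~2. The set $\Theta_\gamma=\{n\in\mathbb{Z}_+:\{n\beta_i\}\le \epsilon/(2^{\gamma}M)\text{ for all }2\le i\le q\}$ is \emph{not} the set of return times of $\vec 0$ to a neighbourhood of itself in $\mathbb{T}^{q-1}$: the cube $[0,\delta]^{q-1}$ is not a neighbourhood of $\vec 0$ in the torus (it omits points whose coordinates have fractional part close to $1$), and its intersection with the orbit closure of $\vec 0$ under the rotation by $(\beta_2,\dots,\beta_q)$ can reduce to the single point $\vec 0$. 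Concretely, take $q=3$ and $\vec v=(1,\sqrt 2,-\sqrt 2)$: then $\{-n\sqrt 2\}=1-\{n\sqrt 2\}$ for every $n\ge 1$, so the conditions $\{n\sqrt 2\}\le\delta$ and $\{-n\sqrt 2\}\le\delta$ are incompatible once $\delta<1/2$, hence $\Theta_\gamma\cap\mathbb{N}=\emptyset$ and no $n_\gamma$ can be selected. (For $q=2$ the corresponding claim in the paper happens to be true because the orbit closure of $0$ in $\mathbb{T}^1$ is either the whole circle, which meets the nonempty open set $(0,\delta)$, or a finite subgroup containing $0$; your argument ``a rotation is an isometry, so return times are syndetic'' does not survive the passage to $q\ge 3$ with the one-sided condition $\{n\beta_i\}\le\delta$.)

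The gap is repairable, but not by the argument you give: one must replace the floor by the nearest integer. Compare $U_W^{n}$ with $U_T^{(n,k_2(n),\dots,k_q(n))}$, where $k_i(n)$ is the integer nearest to $n\beta_i$; the Fubini estimate then bounds the discrepancy by $C\sum_{i=2}^{q}\min\lk(\{n\beta_i\},1-\{n\beta_i\}\re)$, i.e.\ by the distances from the $n\beta_i$ to $\mathbb{Z}$. The set $\lk\{n\in\mathbb{Z}_+:\min(\{n\beta_i\},1-\{n\beta_i\})\le\delta \text{ for all }i\re\}$ is the set of visits of $\vec 0$ to the genuine open neighbourhood $(-\delta,\delta)^{q-1}$ of $\vec 0$; by minimality of the rotation restricted to the closed subgroup generated by the orbit of $\vec 0$, this set is syndetic, so it meets $\widetilde S$ in a set of positive density. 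Moreover $(n,k_2(n),\dots,k_q(n))\in\Lambda^{\vec v}(\textbf{b})$ for all sufficiently large $n$ in this set, since these distances are forced to $0$ as $\gamma\to\infty$. With this modification Step~2, and hence the whole proof, goes through.
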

\begin{rem}Let $\alpha$ be a finite measurable partition of $X$. We 
	define \begin{align}\label{B4}h_\mu^{\vec{v},*}(\alpha)=\max\limits_{S\subset \Lambda^{\vec{v}}(\textbf{b})}\{h^{S}_{\mu}(T,\alpha)\}.\end{align} By Theorem \ref{thm-B3}, $h_\mu^{\vec{v},*}(\alpha)=H_\mu(\alpha|\mathcal{K}_\mu^{\vec{v}})$ and then
	\eqref{B4} is well defined since it is independent of the selection of $\textbf{b}$.
\end{rem}
Corresponding to the case of $\mathbb{Z}^2$-MPS, we have the following definitions.
\begin{defn}
	Let $(X,\mathcal{B}_X,\mu,T)$ be a $\mathbb{Z}^q$-MPS and $\vec{v}=(1,\beta_2,\ldots,\beta_q)\in\mathbb{R}^q$ be a direction vector.
	\begin{itemize}
		\item[(a)]We say $\mu$ has $\vec{v}$-discrete spectrum if $\mathcal{K}_\mu^{\vec{v}}=\mathcal{B}_X$.
	
		\item[(b)] We say $(X,\mathcal{B}_X,\mu,T)$ is  $\vec{v}$-null if for any  $\textbf{b}=(b_2,\ldots,b_q)\in \mathbb{R}_+^{q-1}$ and infinite subset $S=\{\vec{w}_i=(m_1^{(i)},\ldots,m_q^{(i)})\}_{i=1}^{\infty}$ of $\Lambda^{\vec{v}}(\textbf{b}) $, $h^{S}_{\mu}(T)=0$.
	
	\end{itemize}
	
\end{defn}
\medskip

Now we state main conclusions for the case of $\mathbb{Z}^q$-actions corresponding to $\mathbb{Z}^2$-actions,  which are proved by exactly the same methods for the case of $\mathbb{Z}^2$-MPS as follows. 
\begin{thm}\label{thm-B4}
		Let $(X,\mathcal{B}_X,\mu,T)$ be a $\mathbb{Z}^q$-MPS and $\vec{v}=(1,\beta_2,\ldots,\beta_q)\in\mathbb{R}^q$ be a direction vector.  Then the following two conditions are equivalent.

		\begin{itemize}
		\item[(a)] $\mu$ has $\vec{v}$-discrete spectrum system.
		\item[(b)]$(X,\mathcal{B}_X,\mu,T)$ is $\vec{v}$-null.
	\end{itemize}
\end{thm}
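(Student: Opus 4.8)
The plan is to transcribe the proof of Theorem \ref{thm2} line for line, replacing its two structural ingredients by their $\mathbb{Z}^q$ counterparts. The $\mathbb{Z}^2$ argument used only two facts: the upper bound $h^S_\mu(T,\alpha)\le H_\mu(\alpha|\mathcal{K}^{\vec v}_\mu)$ (Lemma \ref{lem-4}, now the first half of Theorem \ref{thm-B3}), and the characterization of $\mathcal{K}^{\vec v}_\mu$ via vanishing directional sequence entropy of two-set partitions (Corollary \ref{prop-111}). So first I would record the $\mathbb{Z}^q$ analogue of Corollary \ref{prop-111}: for $B\in\mathcal{B}_X$, one has $B\in\mathcal{K}^{\vec v}_\mu$ if and only if $h^S_\mu(T,\{B,B^c\})=0$ for every infinite subset $S$ of $\Lambda^{\vec v}(\textbf{b})$. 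This follows from the $\mathbb{Z}^q$ version of Theorem \ref{thm-11} (equivalence of compactness of $\overline{\{U_T^{\vec w}1_B:\vec w\in P\}}$ with the vanishing of $h^{S'}_\mu(T,\{B,B^c\})$), whose proof copies that of Theorem \ref{thm-11} verbatim once $(m,n)$ is replaced by $\vec w\in\mathbb{Z}^q$.

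With these in hand the equivalence is immediate. For (a) $\Rightarrow$ (b), assume $\mathcal{K}^{\vec v}_\mu=\mathcal{B}_X$ and fix any $\textbf{b}\in\mathbb{R}^{q-1}_+$ and any infinite subset $S$ of $\Lambda^{\vec v}(\textbf{b})$. By Theorem \ref{thm-B3}, for every finite measurable partition $\alpha$ of $X$ one has
\[
h^S_\mu(T,\alpha)\le H_\mu(\alpha|\mathcal{K}^{\vec v}_\mu)=H_\mu(\alpha|\mathcal{B}_X)=0,
\]
and taking the supremum over $\alpha$ yields $h^S_\mu(T)=0$, so the system is $\vec v$-null. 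For (b) $\Rightarrow$ (a), assume the system is $\vec v$-null, and fix any $B\in\mathcal{B}_X$ and any $\textbf{b}$. For every infinite subset $S$ of $\Lambda^{\vec v}(\textbf{b})$ we have $h^S_\mu(T)=0$, hence in particular $h^S_\mu(T,\{B,B^c\})=0$; by the $\mathbb{Z}^q$ analogue of Corollary \ref{prop-111} this forces $B\in\mathcal{K}^{\vec v}_\mu$. As $B$ is arbitrary, $\mathcal{B}_X\subseteq\mathcal{K}^{\vec v}_\mu$, i.e. $\mathcal{K}^{\vec v}_\mu=\mathcal{B}_X$, so $\mu$ has $\vec v$-discrete spectrum.

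I expect the genuinely new work to lie not in the two-line equivalence above but in establishing Theorem \ref{thm-B3}, specifically the lower bound that generalizes Lemma \ref{lem-10}. There Park's auxiliary $\mathbb{Z}$-MPS must be built over $\widetilde X=X\times[0,1)^q$ with $W^n=\phi_{n,n\beta_2,\dots,n\beta_q}$, and the syndetic-set argument of Step 2 must be redone: one needs the set $\{n\in\mathbb{Z}_+:\{n\beta_i\}<\delta\text{ for all }2\le i\le q\}$ to be syndetic, so that it meets the density-one set supplied by the Koopman--von Neumann Theorem \ref{knt}. This is precisely simultaneous recurrence for the rotation $n\mapsto(n\beta_2,\dots,n\beta_q)$ on $\mathbb{T}^{q-1}$ and follows from minimality of that rotation on the closure of its orbit. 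Once this point is handled, the Fubini and concavity estimates of the remaining steps carry over with no change, everything else being a formal transcription of the $\mathbb{Z}^2$ arguments with $(m,n)$ replaced by $\vec w$; hence the simultaneous-recurrence step is the only place I anticipate needing care.
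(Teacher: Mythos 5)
Your proof of Theorem \ref{thm-B4} itself is correct and follows exactly the paper's route: Appendix B gives no separate argument, stating only that the $\mathbb{Z}^q$ results are proved by the same methods as the $\mathbb{Z}^2$ ones, and your reduction --- (a) $\Rightarrow$ (b) from the upper bound $h^S_\mu(T,\alpha)\le H_\mu(\alpha|\mathcal{K}^{\vec v}_\mu)$ of Theorem \ref{thm-B3}, and (b) $\Rightarrow$ (a) from the $\mathbb{Z}^q$ analogue of Corollary \ref{prop-111} applied to $\{B,B^c\}$ --- is precisely the proof of Theorem \ref{thm2}. One structural remark: for this particular theorem you need less than you suggest. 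The analogue of Corollary \ref{prop-111} rests on Theorem \ref{thm-11}, whose proof uses only Lemma \ref{lem-2}, Lemma \ref{lem-6} and the weak-limit operator $S(\cdot)$; and the upper bound in Theorem \ref{thm-B3} is Lemma \ref{lem-4}, which in turn uses only Corollary \ref{prop-111}. None of this touches Park's skew product or the lower bound of Theorem \ref{thm-B3}, and all of it transcribes verbatim with $(m,n)$ replaced by $\vec w$. The lower bound (the analogue of Lemma \ref{lem-10}) is needed for Theorems \ref{thm-B3} and the $\mathbb{Z}^q$ version of Theorem \ref{thm3}, but not for Theorem \ref{thm-B4}.

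The one substantive claim you make about that harder ingredient is, however, false as stated, and since you single it out as the only place needing care it is worth correcting. The set $\{n\in\mathbb{Z}_+:\{n\beta_i\}<\delta \text{ for all } 2\le i\le q\}$ need not be syndetic; it can be empty apart from $n=0$. Take $q=3$, $\beta_2=\beta$ irrational and $\beta_3=-\beta$: then $\{n\beta_2\}+\{n\beta_3\}=1$ for every $n\ne 0$, so the two fractional parts are never simultaneously below $1/2$. Minimality of the rotation on the orbit closure $H\subset\mathbb{T}^{q-1}$ gives syndetic visit times to non-empty \emph{open} subsets of $H$, but the one-sided corner $H\cap[0,\delta)^{q-1}$ is not a neighbourhood of $0$ in $H$ and may contain no point of $H$ other than $0$. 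The standard fix is to compare $U_W^n$ with $U_T^{(n,m_2,\dots,m_q)}$ where $m_i$ is the \emph{nearest} integer to $n\beta_i$ rather than the floor; the error in the analogue of \eqref{f} is then controlled by $\sum_{i}\|n\beta_i\|$ with $\|\cdot\|$ the distance to $\mathbb{Z}$, and the set $\{n:\|n\beta_i\|<\delta \text{ for all } i\}$ is the return-time set of $0$ to a genuine open neighbourhood of $0$ in $H$, hence syndetic, and still meets the density-one set from Theorem \ref{knt}. With the one-sided set as you wrote it, Step 2 would fail.
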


\begin{thm}	Let $(X,\mathcal{B}_X,\mu,T)$ be a $\mathbb{Z}^q$-MPS.
	Given a finite measurable partition $\alpha$ of $X$. If there exists an infinite subset $S$ of $\mathbb{Z}^q$, such that $h^S_{\mu}(T,\alpha)>0$, then there are at most $q-1$ linearly independent directions $\{\vec{v_i}=(1,\beta^i_2,\ldots,\beta^i_q)\}_{i=1}^{q-1}\subset\mathbb{R}^q$ such that $$h_\mu^{\vec{v_i},*}(\alpha)=0, \text{ } i=1,2,\ldots,q-1.$$
\end{thm}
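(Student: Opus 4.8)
The plan is to argue by contradiction, mirroring the proof of Theorem \ref{111} but replacing the two tubes by $q$ tubes. Suppose, contrary to the claim, that there exist $q$ linearly independent directions $\vec{v_1},\ldots,\vec{v_q}$, with $\vec{v_i}=(1,\beta_2^i,\ldots,\beta_q^i)$, such that $h_\mu^{\vec{v_i},*}(\alpha)=0$ for every $i$. By Theorem \ref{thm-B3} together with \eqref{B4}, this forces $H_\mu(\alpha|\mathcal{K}_\mu^{\vec{v_i}})=0$ for each $i$, so that $\alpha\subset\bigcap_{i=1}^q\mathcal{K}_\mu^{\vec{v_i}}$. Fixing $B\in\alpha$, the goal is to show that $\overline{\{U_T^{\vec{w}}1_B:\vec{w}\in\mathbb{Z}^q\}}$ is compact in $L^2(X,\mathcal{B}_X,\mu)$; once this is established, the $\mathbb{Z}^q$-analogue of Lemma \ref{11} yields $h_\mu^{S'}(T,\{B,B^c\})=0$ for every infinite $S'\subset\mathbb{Z}^q$, and since $\bigvee_{B\in\alpha}\{B,B^c\}$ refines $\alpha$ we obtain $h_\mu^{S'}(T,\alpha)=0$ for all such $S'$, contradicting the hypothesis on $S$.

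First I would establish the combinatorial heart of the argument: a $\mathbb{Z}^q$-version of Lemma \ref{c} asserting that, whenever $\vec{v_1},\ldots,\vec{v_q}$ are linearly independent, there is a choice of $\textbf{b}=(b_2,\ldots,b_q)$ with each $b_j$ sufficiently large for which $\mathbb{Z}^q=\Lambda^{\vec{v_1}}(\textbf{b})+\cdots+\Lambda^{\vec{v_q}}(\textbf{b})$. I expect this to be the main obstacle, since the elementary division-algorithm argument used for $q=2$ no longer suffices. My plan is to use that the matrix $A$ whose columns are $\vec{v_1},\ldots,\vec{v_q}$ is invertible, so every $\vec{x}=(x_1,\ldots,x_q)\in\mathbb{Z}^q$ equals $A\vec{t}$ for a unique real $\vec{t}$; choosing the leading coordinates $m_1^{(i)}$ of the summands to be integers on the affine lattice $\{(m_1^{(i)}):\sum_i m_1^{(i)}=x_1\}$, I would show that the vector $\big(\sum_i\beta_j^i m_1^{(i)}\big)_{j=2}^q$ ranges over a translate of the cocompact lattice $\Phi_0(L_0)\subset\mathbb{R}^{q-1}$, where $L_0=\{(c_i)\in\mathbb{Z}^q:\sum_i c_i=0\}$ and $\Phi_0(c)=\big(\sum_i\beta_j^i c_i\big)_{j=2}^q$ is injective precisely because the $\vec{v_i}$ are independent. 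Relative density of this lattice then lets me place $\big(\sum_i\beta_j^i m_1^{(i)}\big)$ within the slack $b_j/2$ of $x_j$, after which the remaining coordinates $m_j^{(i)}$ can be chosen as integers summing exactly to $x_j$.

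With the decomposition in hand, the compactness proof proceeds exactly as in Theorem \ref{111}, only iterated $q$ times instead of twice. For each $i$, the hypothesis $B\in\mathcal{K}_\mu^{\vec{v_i}}$ makes $\mathcal{P}^{(i)}:=\overline{\{U_T^{\vec{w}}1_B:\vec{w}\in\Lambda^{\vec{v_i}}(\textbf{b})\}}$ compact, so it admits a finite $(\epsilon/q)$-net indexed by points $\vec{a}^{(i)}_1,\ldots,\vec{a}^{(i)}_{s_i}\in\Lambda^{\vec{v_i}}(\textbf{b})$. Given $\vec{w}=\sum_{i=1}^q\vec{w_i}$ with $\vec{w_i}\in\Lambda^{\vec{v_i}}(\textbf{b})$, I would telescope $U_T^{\vec{w}}1_B-U_T^{\sum_i\vec{a}^{(i)}}1_B$ into $q$ successive differences and bound each by $\epsilon/q$ using the isometry of the operators $U_T^{\vec{c}}$ and the identity $U_T^{\vec{a}}1_{T^{-\vec{c}}B}=U_T^{\vec{a}+\vec{c}}1_B$, exactly as in \eqref{8}. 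The finite set $\{\sum_{i=1}^q\vec{a}^{(i)}_{k_i}:1\leq k_i\leq s_i\}$ is then an $\epsilon$-net for $\overline{\{U_T^{\vec{w}}1_B:\vec{w}\in\sum_i\Lambda^{\vec{v_i}}(\textbf{b})\}}$, which by the combinatorial lemma equals $\overline{\{U_T^{\vec{w}}1_B:\vec{w}\in\mathbb{Z}^q\}}$; hence the latter is compact, and the contradiction outlined above completes the proof.
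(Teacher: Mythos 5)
Your proposal is correct and follows the same route as the paper's proof of the $q=2$ case (Theorem \ref{111}), which the paper asserts extends to general $q$ by the same method. The only genuinely new ingredient is the $\mathbb{Z}^q$ analogue of Lemma \ref{c}, and your lattice argument for it is sound: linear independence of the $\vec{v_i}$ (all having first coordinate $1$) makes $\vec{c}\mapsto\bigl(\sum_i\beta_j^i c_i\bigr)_{j=2}^q$ injective on the hyperplane $\sum_i c_i=0$, so the image of $\{\vec{c}\in\mathbb{Z}^q:\sum_i c_i=0\}$ is a full-rank, hence relatively dense, lattice in $\mathbb{R}^{q-1}$, which correctly replaces the division-algorithm step that only works for two directions.
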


\begin{thm}	
	Let $(X,\mathcal{B}_X,\mu,T)$ be a $\mathbb{Z}^q$-MPS. Then the following statements are equivalent.
	\begin{itemize}
		\item[(a)] There exist $q$ linearly independent directions $\{\vec{v_i}=(1,\beta^i_2,\ldots,\beta^i_q)\}_{i=1}^{q}\subset\mathbb{R}^q$ such that $\mu$ has $\vec{v_i}$-discrete spectrum for all $i=1,\ldots,q$.
		\item[(b)]$\mu$ has discrete spectrum.
	\end{itemize}
\end{thm}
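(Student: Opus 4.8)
The plan is to reduce the equivalence to a single new ingredient, namely the $q$-direction analogue of the covering lemma Lemma \ref{c}, after which the argument of Theorem \ref{thm11} transfers essentially verbatim. Concretely, I would first establish: if $\vec{v}_1=(1,\beta_2^1,\ldots,\beta_q^1),\ldots,\vec{v}_q=(1,\beta_2^q,\ldots,\beta_q^q)$ are linearly independent in $\mathbb{R}^q$, then there is $\textbf{b}_0\in\mathbb{R}^{q-1}_+$ such that $\mathbb{Z}^q=\Lambda^{\vec{v}_1}(\textbf{b})+\cdots+\Lambda^{\vec{v}_q}(\textbf{b})$ for every $\textbf{b}\geq\textbf{b}_0$ coordinatewise.

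To prove this I would exploit linear independence through a lattice argument. The difference vectors $\vec{v}_1-\vec{v}_q,\ldots,\vec{v}_{q-1}-\vec{v}_q$ are linearly independent and all have vanishing first coordinate, so their transverse parts $(\beta_j^i-\beta_j^q)_{j=2}^q$ form a basis of $\mathbb{R}^{q-1}$. Consequently
$$\Lambda_0:=\Big\{\Big(\sum_{i=1}^q\beta_2^i a_i,\ldots,\sum_{i=1}^q\beta_q^i a_i\Big): a\in\mathbb{Z}^q,\ \sum_{i=1}^q a_i=0\Big\}$$
is a full-rank lattice in $\mathbb{R}^{q-1}$ and hence has finite covering radius $R$. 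Given $\vec{m}=(m_1,\ldots,m_q)\in\mathbb{Z}^q$, I would first pick integers $a_1,\ldots,a_q$ with $\sum_i a_i=m_1$ and $\big|\sum_i\beta_j^i a_i-m_j\big|\leq R$ for all $j\in\{2,\ldots,q\}$; this is possible because the image of $\{\sum a_i=m_1\}$ under $a\mapsto(\sum_i\beta_j^i a_i)_j$ is a translate of $\Lambda_0$, and this step is exactly what replaces the division-algorithm choice of $m_1$ in Lemma \ref{c}. Then for each $j$ I would choose integers $c_j^1,\ldots,c_j^q$ with $c_j^i\in[\beta_j^i a_i-b_j/2,\beta_j^i a_i+b_j/2]$ and $\sum_i c_j^i=m_j$: the Minkowski sum of the integer points of these $q$ intervals is a contiguous block of integers centered near $\sum_i\beta_j^i a_i$ with radius at least $q(b_j/2-1)$, so $m_j$ is reached as soon as $b_j\geq 2+2R/q$. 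Setting $\vec{w}^{(i)}=(a_i,c_2^i,\ldots,c_q^i)\in\Lambda^{\vec{v}_i}(\textbf{b})$ yields $\sum_i\vec{w}^{(i)}=\vec{m}$.

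With this covering lemma, the equivalence follows as in Theorem \ref{thm11}. For (a) $\Rightarrow$ (b), assuming $\mathcal{K}_\mu^{\vec{v}_i}=\mathcal{B}_X$ for all $i$, I would fix $B\in\mathcal{B}_X$, take $\textbf{b}$ large enough that the lemma applies, and note each orbit closure $\mathcal{P}^{(i)}:=\overline{\{U_T^{\vec{w}}1_B:\vec{w}\in\Lambda^{\vec{v}_i}(\textbf{b})\}}$ is then compact. Telescoping the $\epsilon$-net construction of Theorem \ref{111}, I would take a finite $\epsilon/q$-net of each $\mathcal{P}^{(i)}$ and replace the summands $\vec{w}^{(i)}$ of a decomposition $\vec{w}=\sum_i\vec{w}^{(i)}$ one at a time; since each $U_T^{\vec{w}}$ is unitary, translating all indices by a common vector preserves the $L^2$-distance, so each replacement costs at most $\epsilon/q$ and the finite set of sums of net indices is an $\epsilon$-net for $\overline{\{U_T^{\vec{w}}1_B:\vec{w}\in\mathbb{Z}^q\}}$. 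Thus this set is totally bounded, hence compact, giving $B\in\mathcal{K}_\mu$ and so $\mathcal{K}_\mu=\mathcal{B}_X$. For (b) $\Rightarrow$ (a), discrete spectrum makes $\overline{\{U_T^{\vec{w}}1_B:\vec{w}\in\mathbb{Z}^q\}}$ compact for every $B$; since $\Lambda^{\vec{v}_i}(\textbf{b})\subset\mathbb{Z}^q$, each $\mathcal{P}^{(i)}$ is a closed subset of this compact set and is therefore compact, whence $B\in\mathcal{K}_\mu^{\vec{v}_i}$ and $\mathcal{K}_\mu^{\vec{v}_i}=\mathcal{B}_X$ for each of the $q$ directions.

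I expect the covering lemma to be the main obstacle: in the two-direction case a single division-algorithm step suffices, whereas for $q$ directions one needs the observation that linear independence of the $\vec{v}_i$ forces the transverse image lattice $\Lambda_0$ to be full-rank (equivalently, to have finite covering radius), which is precisely what allows the tube widths $b_j$ to be chosen uniformly in $\vec{m}$. The telescoping $\epsilon$-net estimate and the closed-subset argument are then routine adaptations of the $\mathbb{Z}^2$ proofs.
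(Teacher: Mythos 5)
Your proposal is correct and follows essentially the same route as the paper: the paper states the $\mathbb{Z}^q$ theorem in Appendix B with the remark that it is proved ``by exactly the same methods'' as the $\mathbb{Z}^2$ case (Theorem \ref{thm11}), whose only non-routine ingredient is the covering identity of Lemma \ref{c} followed by the telescoping $\epsilon$-net estimate, and your lattice covering-radius argument for $\mathbb{Z}^q=\Lambda^{\vec{v}_1}(\textbf{b})+\cdots+\Lambda^{\vec{v}_q}(\textbf{b})$ is the natural $q$-fold generalization of the division-algorithm step there (for $q=2$ your lattice $\Lambda_0=(\beta_1-\beta_2)\mathbb{Z}$ recovers exactly the paper's choice of $m_1$ in \eqref{c1}). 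Your write-up in fact supplies the one detail the paper leaves implicit, namely that linear independence of the $\vec{v}_i$ makes the transverse lattice full-rank and hence of finite covering radius, which is what lets the tube widths be chosen uniformly in $\vec{m}$.
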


\end{appendix}

\end{document}